\def\Rfwt{{R}}
\def\asc{{\textnormal{\texttt{ASC}}}\xspace}
\def\afw{{\textnormal{\texttt{AFW}}}\xspace}
\def\bpfw{{\textnormal{\texttt{BPFW}}}\xspace}
\def\fw{{\textnormal{\texttt{FW}}}\xspace}
\def\irr{{\textnormal{\texttt{IRR}}}\xspace}
\def\fwu{{\textnormal{\texttt{FWU}}}\xspace}
\def\caa{{\textnormal{\texttt{CA}}}\xspace}
\def\ccu{{\textnormal{\texttt{CCU}}}\xspace}
\def\pm{{\textnormal{\texttt{PM}}}\xspace}
\def\lmo{{\textnormal{\texttt{LMO}}}\xspace}
\def\aa{{\mathbf{a}}}
\def\cc{{\mathbf{c}}}
\def\dd{{\mathbf{d}}}
\def\ee{{\mathbf{e}}}
\def\pp{{\mathbf{p}}}
\def\rr{{\mathbf{r}}}
\def\ss{{\mathbf{s}}}
\def\vv{{\mathbf{v}}}
\def\ww{{\mathbf{w}}}
\def\xx{{\mathbf{x}}}
\def\yy{{\mathbf{y}}}
\def\zz{{\mathbf{z}}}
\def\aalpha{\boldsymbol{\alpha}}
\def\bbeta{\boldsymbol{\beta}}
\def\ggamma{{\boldsymbol{\gamma}}}
\def\llambda{{\boldsymbol{\lambda}}}
\def\mmu{{\boldsymbol{\mu}}}
\newcommand{\N}{\mathbb{N}}
\newcommand{\R}{\mathbb{R}}
\newcommand\cC{{\ensuremath{\mathcal{C}}}\xspace}
\newcommand\cD{{\ensuremath{\mathcal{D}}}\xspace}
\newcommand\cO{{\ensuremath{\mathcal{O}}}\xspace}
\newcommand\cP{{\ensuremath{\mathcal{P}}}\xspace}
\newcommand\cS{{\ensuremath{\mathcal{S}}}\xspace}
\newcommand\cT{{\ensuremath{\mathcal{T}}}\xspace}
\newcommand\cV{{\ensuremath{\mathcal{V}}}\xspace}
\newcommand\cX{{\ensuremath{\mathcal{X}}}\xspace}
\DeclareMathOperator{\relint}{rel.int}
\DeclareMathOperator{\aff}{aff}
\DeclareMathOperator{\vertices}{vert}
\DeclareMathOperator{\dist}{dist}
\DeclareMathOperator{\faces}{faces}
\DeclareMathOperator{\argmax}{argmax}
\DeclareMathOperator{\argmin}{argmin}
\DeclareMathOperator{\conv}{conv}
\DeclareMathOperator{\supp}{supp}
\newcommand{\zeros}{\ensuremath{\mathbf{0}}}
\newcommand{\ones}{\mathbf{1}}
\newcommand{\norm}[1]{\| #1 \|}
\theoremstyle{plain} \numberwithin{equation}{section}
\newtheorem{theorem}{Theorem}[section]
\numberwithin{theorem}{section}
\newtheorem{lemma}[theorem]{Lemma}
\newtheorem{corollary}[theorem]{Corollary}
\newtheorem{proposition}[theorem]{Proposition}
\theoremstyle{definition}
\newtheorem{definition}[theorem]{Definition}
\theoremstyle{plain}
\newtheorem{assumption}{Assumption}
\newcommand{\hrulealg}[0]{\vspace{1mm} \hrule \vspace{1mm}}
\begin{document}

%

%

\runningtitle{The Pivoting Framework: Frank-Wolfe Algorithms with Active Set Size Control}

\twocolumn[

\aistatstitle{The Pivoting Framework:
Frank-Wolfe Algorithms \\ with Active Set Size Control}

\aistatsauthor{ Elias Wirth \And Mathieu Besan\c con \And  Sebastian Pokutta}

\aistatsaddress{ TU Berlin \And  Université Grenoble Alpes\\Inria, CNRS, LIG \And TU Berlin \\ Zuse Institute Berlin } ]



\begin{abstract}
We propose the pivoting meta algorithm (\pm{}) to enhance optimization algorithms that generate iterates as convex combinations of vertices of a feasible region $\cC\subseteq \R^n$, including Frank-Wolfe (\fw) variants. \pm guarantees that the active set (the set of vertices in the convex combination) of the modified algorithm remains as small as $\dim(\cC)+1$ as stipulated by Carathéodory's theorem. \pm{} achieves this by reformulating the active set expansion task into an equivalent linear program, which can be efficiently solved using a single pivot step akin to the primal simplex algorithm; the convergence rate of the original algorithms are maintained. Furthermore, we establish the connection between \pm{} and active set identification, in particular showing under mild assumptions that \pm{} applied to the away-step Frank-Wolfe algorithm (\afw{}) or the blended pairwise Frank-Wolfe algorithm (\bpfw{}) bounds the active set size by the dimension of the optimal face plus $1$. We provide numerical experiments to illustrate practicality and efficacy on active set size reduction.
\end{abstract}

\section{Introduction}\label{sec:introduction}
We study constrained convex optimization problems
\begin{equation}\label{eq:opt}\tag{OPT}
    \min_{\xx\in\cC}f(\xx),
\end{equation}
where $\cC\subseteq\R^n$ is a compact convex set with vertex set $\cV = \vertices(\cC)$ and $f\colon \cC \to \R$ is a convex and smooth function.
%
When projecting onto $\cC$ is computationally challenging, we can address \eqref{eq:opt} using the projection-free \emph{Frank-Wolfe algorithm} (\fw{}) \citep{frank1956algorithm}, a.k.a. the \emph{conditional gradients algorithm} \citep{levitin1966constrained}. The \fw{} algorithm, presented in Algorithm~\ref{alg:fw}, only requires first-order access to the function $f$ and a \emph{linear minimization oracle} (\lmo{}) for the feasible region $\cC$. The \lmo{} returns a point in $\argmin_{\xx\in\cC}\langle\cc,\xx\rangle$ when given $\cc\in\R^n$.
\fw{} possesses favorable attributes such as ease of implementation, affine invariance \citep{lacoste2013affine, kerdreux2021affine, pena2023affine}, and its iterates are sparse convex combinations of vertices
of $\cC$. At each iteration, the \fw{} algorithm calls the \lmo{} to obtain a new \fw{} vertex $\vv^{(t)}\in\cV$. As presented in Algorithm~\ref{alg:fw}, the current iterate $\xx^{(t)}$ is updated with line-search $\eta^{(t)} = \argmin_{\eta \in [0, 1]} f(\xx^{(t)} + \eta (\vv^{(t)}-\xx^{(t)}))$ in Line~\ref{line:fw_step} to obtain $\xx^{(t+1)}= \xx^{(t)} + \eta^{(t)} (\vv^{(t)} - \xx^{(t)})$. Alternative step-size rules exist, including short step, adaptive \citep{pedregosa2018step,P2023}, and open-loop variants $\eta^{(t)} = \frac{\ell}{t+\ell}$ for some $\ell\in\N_{>0}$ \citep{dunn1978conditional, wirth2023acceleration, wirth2023accelerated}.

One drawback of the vanilla \fw{} algorithm is its sublinear convergence rate when potentially higher rates are possible, e.g., when the feasible region is a polytope, the objective is strongly convex, and the optimizer lies in the relative interior of a face of $\cC$ (see e.g.,\citet{wolfe1970convergence, bach2021effectiveness, wirth2023acceleration}). Consequently, several variants have been proposed to achieve linear convergence rates in such scenarios (see e.g., \citet{holloway1974extension,guelat1986some,lacoste2015global,garber2016linear,tsuji2022pairwise}). Most of these variants store the current iterate $\xx^{(t)} = \sum_{\ss \in \cS^{(t)}} \alpha_\ss^{(t)}\ss$ as a convex combination of vertices, where $\aalpha^{(t)}\in \Delta_{|\cV|}$, and $\cS^{(t)} = \{\ss\in\cV \mid \alpha_{\ss}^{(t)} > 0 \}$ denote the \emph{weight vector} and \emph{active set}, respectively.
Explicit access to a convex decomposition of $\xx^{(t)}$ enables re-optimization over the active set or aggressive removal of weight from specific vertices and is crucial for achieving linear convergence rates.
Moreover, variants often enhance the sparsity-inducing properties of vanilla \fw{}, which is advantageous in, e.g., deriving bounds for the approximate Carathéodory theorem \citep{combettes2023revisiting}, approximate vanishing ideal computations \citep{wirth2022conditional, wirth2023approximate}, data-driven identification of nonlinear dynamics \citep{carderera2021cindy}, deep neural network training \citep{pokutta2020deep, macdonald2022interpretable}, kernel herding \citep{bach2012equivalence, tsuji2022pairwise, wirth2023acceleration}, robust matrix recovery \citep{mu2016scalable}, and tensor completion \citep{guo2017efficient,bugg2022nonnegative}. 
%

%

Maintaining the active set introduces computational overhead, especially in high-dimensional or dense vertex scenarios due to memory constraints. To improve efficiency, several methods aim to reduce the active set size. One approach alternates between adding vertices and performing correction steps, either fully or partially re-optimizing the set. Examples include the \emph{fully-corrective Frank-Wolfe algorithm} \citep{holloway1974extension, rao2015forward} and the \emph{Blended Frank-Wolfe algorithm (BFW)} \citep{braun2019blended}, also known as \emph{Blended Conditional Gradients (BCG)}. Another approach uses \emph{drop steps} to prune vertices from the active set. Key algorithms include the \emph{away-step Frank-Wolfe algorithm} (\afw{}) \citep{wolfe1970convergence, guelat1986some, lacoste2015global}, detailed in Algorithm~\ref{alg:afw}, the \emph{pairwise Frank-Wolfe algorithm} \citep{lacoste2015global}, the \emph{decomposition-invariant Frank-Wolfe algorithm} \citep{garber2016linear}, and the \emph{Blended Pairwise Frank-Wolfe algorithm (BPFW)} \citep{tsuji2022pairwise}, outlined in Algorithm~\ref{alg:bpfw}, also called \emph{Blended Pairwise Conditional Gradients (BPCG)}.

In \fw{} variants, the size of the active set is typically only bounded solely by the number of iterations performed and the number of vertices in the feasible region, that is, $|\cS^{(t)}|\leq \min\{t + 1, |\cV|\}$. Notably, this bound does not depend on the ambient dimension $n$. This observation is somewhat surprising considering the well-known \emph{Carathéodory theorem}, which guarantees that any vector $\xx\in\cC$ can always be expressed as a convex combination of at most $n + 1$ vertices of $\cC$.
To the best of our knowledge, the only approach proposed to bound the number of vertices to match that of the Carathéodory theorem is the \emph{incremental representation reduction algorithm} (\irr{}) algorithm of \citet{beck2017linearly},
to which we compare our approach in Subsection \ref{sec:irr}.

\begin{theorem}[\citealp{caratheodory1907variabilitatsbereich}]\label{thm:caratheodory}
Let $\cC\subseteq \R^n$ be a compact convex set. Then, any $\xx\in \cC$ can be represented as a convex combination of at most $n + 1$ vertices of $\cC$.
\end{theorem}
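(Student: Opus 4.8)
The plan is to follow the classical reduction argument of Carathéodory: start from an arbitrary finite representation of $\xx$ as a convex combination of vertices and, whenever it uses more than $n+1$ vertices, exploit a linear dependence to drive one coefficient to zero without altering $\xx$. First I would secure a finite starting representation. Since $\cC$ is compact and convex in $\R^n$, Minkowski's theorem gives $\cC = \conv(\vertices(\cC))$, and because the convex hull consists precisely of all finite convex combinations of its generators, there exist distinct vertices $\vv_1, \dots, \vv_k \in \vertices(\cC)$ and coefficients $\lambda_1, \dots, \lambda_k > 0$ with $\sum_{i=1}^k \lambda_i = 1$ and $\xx = \sum_{i=1}^k \lambda_i \vv_i$. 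If $k \le n+1$ there is nothing to prove.

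Next, assuming $k > n+1$, I would produce an affine dependence. The $k - 1 > n$ vectors $\vv_2 - \vv_1, \dots, \vv_k - \vv_1$ lie in $\R^n$ and are therefore linearly dependent, yielding scalars $\mu_2, \dots, \mu_k$, not all zero, with $\sum_{i=2}^k \mu_i (\vv_i - \vv_1) = 0$. Setting $\mu_1 = -\sum_{i=2}^k \mu_i$ produces coefficients $\mu_1, \dots, \mu_k$, not all zero, satisfying both $\sum_{i=1}^k \mu_i \vv_i = 0$ and $\sum_{i=1}^k \mu_i = 0$. The key observation is that for every $t \in \R$ the perturbed coefficients $\lambda_i - t \mu_i$ still sum to $1$ and still reproduce $\xx$, since $\sum_{i=1}^k (\lambda_i - t\mu_i)\vv_i = \xx - t\cdot 0 = \xx$.

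Then I would choose $t$ to annihilate a coefficient. Because $\sum_i \mu_i = 0$ and not all $\mu_i$ vanish, at least one $\mu_i$ is strictly positive. Taking $t^* = \min\{ \lambda_i / \mu_i : \mu_i > 0 \} > 0$ keeps all coefficients $\lambda_i - t^* \mu_i \ge 0$ while forcing at least one of them to equal zero. Discarding the vanished term(s) yields a convex combination equal to $\xx$ using at most $k - 1$ distinct vertices. Iterating this reduction — which only ever removes terms, so the generators remain genuine distinct vertices of $\cC$ — strictly decreases the count at each step and therefore terminates once it reaches $n+1$, giving the claimed bound.

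The only delicate point is the very first step: guaranteeing a finite representation to begin with. For a polytope this is immediate, but for a general compact convex set it rests on Minkowski's theorem (the finite-dimensional form of the Krein–Milman theorem) identifying $\cC$ with the convex hull of its extreme points. Once finiteness is in hand, the dimension-counting elimination is entirely routine and its termination is transparent, so I expect essentially no further difficulty.
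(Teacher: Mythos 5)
Your proof is correct. The paper does not actually prove this statement---it is imported as a classical result with a citation to Carath\'eodory's 1907 paper---so there is no internal argument to compare against; what you give is the standard textbook proof, and it is sound. You also correctly isolate the one genuinely non-trivial point for the generality stated here: since $\cC$ is an arbitrary compact convex set rather than a polytope, the existence of \emph{some} finite representation $\xx = \sum_{i=1}^k \lambda_i \vv_i$ with $\vv_i \in \vertices(\cC)$ requires Minkowski's theorem ($\cC = \conv(\vertices(\cC))$ in finite dimension, with the convex hull taken as the set of finite convex combinations), after which the affine-dependence elimination argument ($k-1 > n$ difference vectors must be linearly dependent, perturb the weights by $t^*\mmu$ with $t^* = \min\{\lambda_i/\mu_i : \mu_i > 0\}$, drop a vanished term, iterate) goes through exactly as you describe.
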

In settings where the number of vertices is significantly larger than the dimension, such as, for example, in the convex hull membership problem \citep{filippozzi2023first}, a dimension-dependent upper bound on the size of the active set is preferable to a bound based solely on the number of vertices of $\cC$. 

\begin{algorithm}[t]
\SetKwInput{Input}{Input} \SetKwInput{Output}{Output}
\SetKwComment{Comment}{$\triangleright$\ }{}
\caption{Frank-Wolfe algorithm (\fw{}) with line-search}\label{alg:fw}
  \Input{$\xx^{(0)}\in \cV$.}
  \Output{$\xx^{(T)} \in \cC$.}
  \hrulealg
  \For{$t= 0, 1, \ldots, T-1 $}{
        {$\vv^{(t)} \gets \argmin_{\vv \in \cV} \langle\nabla f(\xx^{(t)}), \vv - \xx^{(t)}\rangle$}\\
        {$\eta^{(t)} \gets \argmin_{\eta \in [0, 1]} f(\xx^{(t)} + \eta (\vv^{(t)}-\xx^{(t)}))$ }\\
        {$\xx^{(t+1)} \gets \xx^{(t)} + \eta^{(t)} ( \vv^{(t)} - \xx^{(t)})$}\label{line:fw_step}}
\end{algorithm}

\subsection{Contributions}\label{sec:contributions}
In this paper, we address the existing gap in the literature by introducing the \emph{pivoting meta algorithm} (\pm) presented in Algorithm~\ref{alg:pm}. Our contributions can be summarized as follows:

\paragraph{Active-Set Reduction} First, \pm is designed to enhance a family of optimization algorithms, including various existing variants of the Frank-Wolfe algorithm (\fw{}). Our main result, Theorem~\ref{thm:pm}, demonstrates the key advantage of \pm: \pm applied to certain optimization algorithms ensures that the cardinality of the active set remains bounded by $n + 1$ while preserving the convergence rate guarantees of the original algorithm. To achieve this, \pm transforms the task of adding a new vertex $\vv^{(t)}\in\cV\setminus \cS^{(t)}$ to an active set $\cS^{(t)}$ into an equivalent linear programming problem. This problem can be solved using a single pivot step, similar to the primal simplex algorithm \citep{bertsimas1997introduction}.
However, this modification introduces the additional computational complexity of solving an $(n+2)\times (n+2)$ linear system in iterations performing an \fw step,  typically via an LU-decomposition. We highlight however, how this computational burden is alleviated by the fact that only sparse rank-one updates are performed on the system to solve, making it amenable to efficient factorization updates as performed in modern simplex solvers.

\paragraph{Active set identification} Second, we establish a connection between \pm and \emph{active set identification}, the process of identifying the face containing a solution of \eqref{eq:opt} and not to be confused with the active set of \fw algorithms. When the feasible region is a polytope and the minimizers lie in the relative interior of a face $\cC^*$ of $\cC$,
\citet{bomze2020active} provided sufficient conditions that guarantee the existence of an iteration $\Rfwt\in\{0,1, \ldots,T\}$, where $T$ is the number of iterations \afw{} is run for, s.t. for all $t\in\{\Rfwt, \Rfwt+1, \ldots, T\}$, the active set $\cS^{(t)}$ produced by \afw{} is contained in the optimal face $\cC^*$, implying that $\xx^{(t)}\in\cC^*$. In the same setting, we prove that applying \pm to \afw{} guarantees an active set size of at most $\dim(\cC^*) + 1$ for all iterations $t\in\{\Rfwt, \Rfwt+1, \ldots, T\}$.\footnote{The result by \citet{bomze2020active} is not limited to \afw{}. In Appendix~\ref{appendix:bpfw_asi}, we derive a similar result for \bpfw{}.}
Since our result holds for a more general setting, we further improve the upper bound on the size of the active set of $n + 1$ to $\dim(\cC) + 1$ for any compact and convex feasible region $\cC$.

\paragraph{Numerical Experiments} Finally, we provide an algorithmic implementation by applying \pm to \afw{} and the blended pairwise Frank-Wolfe algorithm (\bpfw{}) and comparing the method to \afw{} and \bpfw{}.

Some numerical experiments, proofs, and the discussion of implementation details have been relegated to the supplementary material due to space constraints.

%

\section{Preliminaries}\label{sec:preliminaries}

For $n\in\N$, let $[n]:=\{1,\ldots,n\}$.
Vectors are denoted in bold. Given $\xx,\yy\in\R^n$, let $\xx\geq \yy$ denote that $x_i\geq y_i$ for all $i\in [n]$. Given $\xx \in \R^{n}$, let $\tilde{\xx} \in \R^{n+2}$ be defined  as $(\xx^\intercal, 0, 1)^\intercal\in\R^{n+2}$. We denote the $i$th unit vector of dimension $n$ by $\ee_i \in \R^n$. Given a vector $\xx\in \R^{n}$, denote its support by $\supp(\xx):= \{i\in[n] \mid x_i \neq 0\}$.
Given a matrix $M\in \R^{m \times n}$, we refer to the $(i,j)$th entry of $M$ as $M_{i,j}$ and the $i$th row and column of $M$ as $M_{i, :}$ and $M_{:, i}$, respectively. Furthermore, let $M_{:i, :j}$ denote the restriction of matrix $M$ to rows $1, \ldots, i$ and columns $1, \ldots, j$.
We define the matrix
\begin{align*}
    D_n & : = \begin{pmatrix}
        I_n & \zeros\\
        \ones^\intercal & 1 \\
        \ones^\intercal & 1
    \end{pmatrix} \in \R^{(n+2) \times (n+1)},
\end{align*}
where $I_n$ is the $n$-dimensional identity matrix, $\zeros$ and $\ones$ the all-zero and all-one vectors.
Let $\Delta_n= \{\xx\in\R^n_{\geq 0}\mid \|\xx\|_1 =1\}$ denote the probability simplex.
Throughout, let $\cC\subseteq \R^n$ be a nonempty compact convex set and let $\aff(\cC)$, $\dim(\cC)$, and $\cV = \vertices(\cC)$ denote the affine hull, dimension, and set of vertices of $\cC$, respectively. If $\cC$ is a polytope, let $\faces(\cC)$ denote the sets of faces of $\cC$. 
For a set $F\subseteq \R^n$, $\conv(F)$ and $\relint(F)$ are the convex hull and relative interior of $F$, respectively. For $F, G \subseteq \R^n$ and $\xx\in \R^n$, $\dist{}(F, G) = \inf_{\yy\in F, \zz\in G} \|\yy - \zz\|_2$ is the Euclidean distance between $F$ and $G$ and $\dist{}(x, F) = \inf_{\yy\in F} \|\yy - \xx\|_2$ is the Euclidean point-set distance between $\xx$ and $F$.
A continuously differentiable function $f\colon \cC \to \R$ is \emph{$L$-smooth} over $\cC$ with $L > 0$ if
$
    f(\yy) \leq f(\xx) + \langle \nabla f(\xx), \yy - \xx\rangle + \frac{L}{2}\|\yy-\xx\|_2^2 \; \forall \xx,\yy\in \cC.
$
The function is \emph{$\mu$-strongly convex} with $\mu > 0$ if
$
f(\yy) \geq f(\xx) + \langle \nabla f(\xx), \yy - \xx\rangle + \frac{\mu}{2}\|\yy-\xx\|_2^2 \;\forall \xx,\yy\in \cC.
$

%
The pyramidal width \citep{lacoste2015global} is equivalent to the definition below by \citet{pena2019polytope}.
\begin{definition}[Pyramidal width]
    Let $\emptyset \neq \cC\subseteq \R^n$ be a polytope with vertex set $\cV$. The \emph{pyramidal width} of $\cC$ is 
        $
            \omega := \min_{F\in \faces{}(\cC), \emptyset \subsetneq F \subsetneq \cC} \dist{}(F, \conv(\cV \setminus F)).
        $
\end{definition}
\section{Amenable algorithms}\label{sec:steps_towards_pivoting}

We will begin by introducing two key concepts: a) \emph{Carathéodory-amenable algorithms} (\caa), outlined in Algorithm~\ref{alg:caa}, which are well-suited for use with the pivoting meta-algorithm (\pm); and b) \emph{convex-combination-agnostic} properties, which are inherent characteristics of \caa{}s that are preserved by the \pm.

In particular, we demonstrate that \fw{} algorithms such as vanilla \fw{}, Algorithm~\ref{alg:fw}, the away-step Frank-Wolfe algorithm (\afw{}), Algorithm~\ref{alg:afw}, and the blended pairwise Frank-Wolfe algorithm (\bpfw{}), Algorithm~\ref{alg:bpfw}, are \caa{}s. Furthermore, we prove that most convergence rates for \fw, \afw, and \bpfw are convex-combination-agnostic. Thus, \fw, \afw, or \bpfw modified with \pm regularly enjoy the same convergence rates as the original algorithms. 

\subsection{Carathéodory-amenable algorithms}
\begin{algorithm}[t]
\SetKwInput{Input}{Input} \SetKwInput{Output}{Output}
\SetKwComment{Comment}{$\triangleright$\ }{}
\caption{Carathéodory-amenable algorithm (\caa) [Template]}\label{alg:caa}
\Input{$\xx^{(0)}\in\cV$.
} 
  \Output{$\aalpha^{(T)}\in\Delta_{|\cV|}$, $\cS^{(T)} = \{\ss\in \cV\mid \alpha_\ss^{(T)} > 0\}$, and $\xx^{(T)}\in\cC$, such that $\xx^{(T)} = \sum_{\ss\in \cS^{(T)}} \alpha_\ss^{(T)} \ss$.}
  \hrulealg
  $\aalpha^{(0)}\in\Delta_{|\cV|}$ s.t. for all $\ss\in\cV$,  $\alpha_\ss^{(0)} \gets\begin{cases}
            1,& \text{if} \ \ss = \xx^{(0)}\\
            0, & \text{if} \ \ss\in\cV\setminus\{\xx^{(0)}\}
            \end{cases}$\\
            $\cS^{(0)} \gets \{\ss \in \cV \mid \alpha_\ss^{(0)} > 0\} = \{\xx^{(0)}\}$\\
  \For{$t = 0, 1, \ldots, T - 1$}{
    $(\aalpha^{(t+1)}, \cS^{(t+1)}, \xx^{(t+1)}) \gets \ccu(\aalpha^{(t)}, \cS^{(t)},\xx^{(t)})$ \Comment*[f]{see Algorithm~\ref{alg:ccu}}\label{line:caa_cau_call}
  }
\end{algorithm}

%
Consider an algorithm that represents each iterate $\xx^{(t)}$ as a convex combination, i.e., $\xx^{(t)} = \sum_{\ss \in \cS^{(t)}} \alpha_\ss^{(t)}\ss$. Here, $\aalpha^{(t)}\in \Delta_{|\cV|}$ and $\cS^{(t)} = \{\ss\in\cV \mid \alpha_{\ss}^{(t)} > 0 \}$ denote the \emph{weight vector} and \emph{active set} at iteration $t\in\{0, 1, \ldots, T\}$, respectively.
Most \fw variants then perform one of the following types of updates:

\paragraph{\fw update:} The algorithm shifts weight from all vertices in the active set to a single vertex, that is, $\xx^{(t+1)} = \xx^{(t)} + \eta^{(t)}(\vv^{(t)} - \xx^{(t)})$, where $\vv^{(t)}\in\cV$ and $\eta^{(t)} \in [0, 1]$. See, e.g., Line~\ref{line:fw_step} in \fw{} (Algorithm~\ref{alg:fw}).

\paragraph{Away update:} The algorithm shifts weight from a single vertex in the active set to all other vertices in the active set. That is, $\xx^{(t+1)} = \xx^{(t)} + \eta^{(t)}(\xx^{(t)} - \aa^{(t)})$, where $\aa^{(t)}\in \cS^{(t)}$ and $\eta^{(t)}\in [0, \alpha^{(t)}_{\aa^{(t)}}/(1- \alpha^{(t)}_{\aa^{(t)}})]$. See, e.g., Line~\ref{line:afw_away_step} in \afw{} (Algorithm~\ref{alg:afw}).

\paragraph{Pairwise update:} Assuming that $\eta^{(t)}\neq 1$, the algorithm performs both an away and a \fw update simultaneously, that is, $\xx^{(t + 1)} = \xx^{(t)} + \eta^{(t)} (\vv^{(t)} -\aa^{(t)})$, where $\vv^{(t)} \in \cV$, $\aa^{(t)} \in \cS^{(t)}$, $\vv^{(t)} \neq \aa^{(t)}$, and $\eta^{(t)}\in [0, \alpha^{(t)}_{\aa^{(t)}}]$. See, e.g., Line~\ref{line:bpfw_local_pairwise_step} in \bpfw{} (Algorithm~\ref{alg:bpfw}).
To see that a pairwise update is equivalent to an away update followed by an \fw update, consider the auxiliary vector
$
    \yy^{(t)} = \xx^{(t)} + \frac{\eta^{(t)}}{1-\eta^{(t)}} (\xx^{(t)} - \aa^{(t)})
$
obtained after performing an away update with step-size $\frac{\eta^{(t)}}{1-\eta^{(t)}}$.
Then, as required:
\begin{align*}
\xx^{(t + 1)} & = \xx^{(t)} + \eta^{(t)}(\vv^{(t)} - \aa^{(t)})\\
    & = \xx^{(t)} (1 - \eta^{(t)} + \frac{\eta^{(t)}}{1-\eta^{(t)}} (1-\eta^{(t)})) \nonumber\\
    & + \eta^{(t)}\vv^{(t)} - \frac{\eta^{(t)}}{1-\eta^{(t)}}(1-\eta^{(t)})\aa^{(t)} \nonumber\\
    & = \yy^{(t)} + \eta^{(t)}(\vv^{(t)} - \yy^{(t)}).\nonumber
\end{align*}

\begin{algorithm}[t]
\SetKwInput{Input}{Input} \SetKwInput{Output}{Output}
\SetKwComment{Comment}{$\triangleright$\ }{}
\caption{Convex-combination update (\ccu), $(\bbeta, \cT, \yy) = \ccu(\aalpha, \cS, \xx)$}\label{alg:ccu}
\Input{$\aalpha\in\Delta_{|\cV|}$, $\cS = \{\ss\in \cV\mid \alpha_\ss > 0\}$, and $\xx\in\cS$, such that $\xx = \sum_{\ss\in \cS} \alpha_\ss \ss$.} 
  \Output{$\bbeta\in\Delta_{|\cV|}$, $\cT = \{\ss\in \cV\mid \beta_\ss > 0\}$, and $\yy\in\cC$, such that $\yy = \sum_{\ss\in \cT} \beta_\ss \ss$ and either $\cT\subseteq \cS$ or there exists exactly one $\vv\in\cT\setminus\cS$ such that $\cT\subseteq \cS\cup\{\vv\}$.
}
\end{algorithm}
These updates are captured by the more general \emph{convex-combination updates} (\ccu{}s) formalized in Algorithm~\ref{alg:ccu}. 
We refer to algorithms that perform repeated \ccu{}s as Carathéodory-amenable algorithms (\caa{}s), see Algorithm~\ref{alg:caa}. Several comments are warranted. First, the running examples of this paper, \fw{}, \afw{}, and \bpfw{}, are \caa{}s. Second, despite the focus of this work on \fw algorithms, the class of \caa{}s is general enough to potentially capture other methods, such as simplicial decompositions \citep{bettiol2024oracle} or the nearest-extreme point variant \citep{garber2021frank}. We leave the applicability of our framework for algorithms forming iterates as linear or conic combinations (such as matching pursuit) to future research.
%
Finally, we highlight that a \ccu{} does not require $\vv$ to be obtained via an (exact) \lmo. Thus, the lazified variants of \fw, \afw, and \bpfw  are also \caa{}s.

\subsection{Convex-combination-agnostic properties}

We now formalize in the following definition the concept of properties of \caa{}s that remain unchanged when one convex representation is replaced by another. As demonstrated in Section~\ref{sec:pm}, the pivoting meta-algorithm (\pm) ensures the preservation of these properties.

\begin{definition}[Convex-combination-agnostic property]\label{def:cca}
   Consider a \caa and suppose that the output of the algorithm provably satisfies a property $\cP$. We say that $\cP$ is \emph{convex-combination-agnostic} if property $\cP$ also holds if \caa is modified by replacing Line~\ref{line:caa_cau_call} in Algorithm~\ref{alg:caa} with the following two lines:
   \begin{align*}
    & {\footnotesize\textbf{4a)}} \  (\bbeta^{(t+1)}, \cT^{(t+1)}, \xx^{(t+1)}) \gets \ccu(\aalpha^{(t)}, \cS^{(t)},\xx^{(t)}) \\
     & \begin{aligned}
        {\footnotesize\textbf{4b)}} \ (\aalpha^{(t+1)}, \cS^{(t+1)}) \gets & \ \text{constructed via any procedure}\\
        & \text{that guarantees that}\\
        & \xx^{(t + 1)} = \sum_{\ss \in \cS^{(t + 1)}} \alpha_\ss^{(t + 1)}\ss,\\
        & \text{where} \ \aalpha^{(t+1)}\in\Delta_{|\cV|} \ \text{and} \\
        & \cS^{(t+1)} = \{\ss\in \cT^{(t)}\mid \alpha_\ss^{(t+1)} > 0\}.
    \end{aligned}
\end{align*}
\end{definition}
Most properties are convex-combination-agnostic, including convergence rates of the \fw variants that we consider as running examples here and which we will represent in the remainder of this section.

\begin{theorem}[Sublinear convergence rate of \fw{}]\label{thm:fw_sublinear}
    Let $\cC\subseteq \R^n$ be a compact convex set of diameter $\delta > 0$ and let $f\colon \cC \to \R$ be a convex and $L$-smooth function. Then, for the iterates of Algorithm~\ref{alg:fw} (\fw) with line-search, short-step, or open-loop step-size rule $\eta^{(t)} = \frac{2}{t+2}$, the convergence guarantee
    $
    f(\xx^{(t)}) - \min_{\xx\in\cC}f(\xx) \leq \frac{2 L \delta^2}{t+2}
    $
    is convex-combination-agnostic.
\end{theorem}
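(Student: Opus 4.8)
The plan is to exploit that the claimed bound $f(\xx^{(t)}) - \min_{\xx\in\cC}f(\xx) \leq \frac{2L\delta^2}{t+2}$ is a statement about the sequence of \emph{iterates} $\{\xx^{(t)}\}$ alone: it never references the weight vector $\aalpha^{(t)}$ nor the active set $\cS^{(t)}$. Consequently it suffices to show that the modification of \fw prescribed in Definition~\ref{def:cca} generates exactly the same iterate sequence as unmodified \fw, after which the already-established guarantee applies verbatim.

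First I would record the structural observation that, for vanilla \fw, the \ccu is simply the \fw update, and this update is a function of the point $\xx^{(t)}$ only, never of its convex-combination representation. Indeed, $\vv^{(t)} = \argmin_{\vv\in\cV}\langle\nabla f(\xx^{(t)}),\vv-\xx^{(t)}\rangle$ depends on $\xx^{(t)}$ alone through the gradient and the \lmo; each admissible step-size rule is likewise representation-free (line-search and short-step depend only on $\xx^{(t)}$ and $\vv^{(t)}$, and the open-loop rule $\eta^{(t)}=\frac{2}{t+2}$ depends only on $t$); and $\xx^{(t+1)} = \xx^{(t)} + \eta^{(t)}(\vv^{(t)}-\xx^{(t)})$. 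This is exactly where vanilla \fw differs from \afw and \bpfw, whose away and pairwise steps select a vertex \emph{from} $\cS^{(t)}$ and therefore do depend on the representation.

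Next I would run a short induction on $t$. The base case holds because both runs start from the same $\xx^{(0)}$. For the inductive step, assume both runs agree on $\xx^{(t)}$. Step \textbf{4a} applies the \ccu, i.e.\ the \fw update, which by the previous paragraph returns the same $\xx^{(t+1)}$ as unmodified \fw; step \textbf{4b} then only re-expresses this \emph{same} point through a different convex combination, and the defining constraint $\xx^{(t+1)} = \sum_{\ss\in\cS^{(t+1)}}\alpha_\ss^{(t+1)}\ss$ leaves the iterate untouched. Hence the iterate sequences, and therefore the function-value sequences $\{f(\xx^{(t)})\}$, coincide, and the bound transfers immediately --- which is precisely the assertion that the guarantee is convex-combination-agnostic.

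I do not anticipate a genuine obstacle for \fw itself: the only point requiring care is confirming that every admissible step-size rule is representation-independent and, in particular, never secretly consults $\aalpha^{(t)}$ or $\cS^{(t)}$. The real difficulty is deferred to the \afw and \bpfw analogues, where iterate-identity fails --- re-representing $\xx^{(t+1)}$ can change which away or pairwise vertex is later selected --- so those cases cannot rely on this simple induction and instead need an argument showing that the underlying convergence proof uses only inequalities that are robust to the choice of active set.
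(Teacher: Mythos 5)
Your proposal is correct, but it argues by a different mechanism than the paper. You prove \emph{iterate identity}: since every ingredient of a vanilla \fw{} step (the gradient, the \lmo{} call, and each admissible step-size rule) is a function of the point $\xx^{(t)}$ and $t$ alone, the representation swap in step \textbf{4b} of Definition~\ref{def:cca} cannot alter the trajectory, so the modified and unmodified runs produce the same iterates and the bound transfers verbatim. The paper instead argues \emph{proof robustness}: it simply notes, citing \citet{jaggi2013revisiting}, that the convergence analysis itself never invokes any property of the convex combination, so the guarantee survives arbitrary re-representation of the iterates. Your route is more elementary and self-contained --- it requires no inspection of the analysis in \citet{jaggi2013revisiting}, and it yields the stronger conclusion that the two runs coincide pointwise (modulo tie-breaking in the $\argmin$ of the \lmo{} and line-search, which you should either fix deterministically or handle by observing that any run of the modified algorithm is a valid run of the original). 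The paper's route is the one that scales: precisely because iterate identity fails for \afw{} and \bpfw{} --- a limitation you correctly flag --- the paper reuses the proof-robustness template in Theorems~\ref{thm:afw_bpfw_sublinear} and~\ref{thm:afw_bpfw_linear}, where the only representation-dependent quantity in the underlying analysis is the count of drop steps, which can only decrease when the active set is replaced by a smaller one.
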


\citet{tsuji2022pairwise} showed convergence rate guarantees for \bpfw{} similar to those of of \afw{}.
Below, we present both the general sublinear rate as well as the linear rate for the case of $\cC$ being a polytope.

\begin{theorem}[Sublinear convergence rates of \afw{} and \bpfw{}]\label{thm:afw_bpfw_sublinear}
    Let $\cC\subseteq \R^n$ be a compact convex set of diameter $\delta > 0$ and let $f\colon \cC \to \R$ be a convex and $L$-smooth function. Then, for the iterates of Algorithms~\ref{alg:afw} (\afw{}) and~\ref{alg:bpfw} (\bpfw{}) with line-search, the convergence guarantee
    $
        f(\xx^{(t)}) - \min_{\xx\in\cC}f(\xx)\leq \frac{4 L \delta^2}{t}
    $
    is convex-combination-agnostic.
\end{theorem}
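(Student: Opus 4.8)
The plan is not to re-derive the $O(1/t)$ rates of \afw{} and \bpfw{} from scratch, but to revisit the standard good-step/drop-step argument underlying them (as in \citet{lacoste2015global,tsuji2022pairwise}) and to verify that every ingredient depends only on the iterate sequence $(\xx^{(t)})$ and the gradients, never on the particular convex decomposition stored in $(\aalpha^{(t)},\cS^{(t)})$. Concretely, I would set $h_t := f(\xx^{(t)}) - \min_{\xx\in\cC}f(\xx)$ and let $g^{(t)} := \max_{\vv\in\cV}\langle\nabla f(\xx^{(t)}),\xx^{(t)}-\vv\rangle$ denote the Frank--Wolfe gap, and classify each iteration of the modified algorithm (Lines 4a--4b of Definition~\ref{def:cca}) as a \emph{good step} or a \emph{drop step} exactly as the original analysis does. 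This classification is made inside the \ccu{} call of Line 4a, which still operates on the current representation, so it is well defined.

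First I would record the two representation-independent estimates. By convexity, $h_t\leq g^{(t)}$, which involves only $\xx^{(t)}$. For a good step, line search together with $L$-smoothness yields $h_t - h_{t+1}\geq \tfrac12\min\{g^{(t)},(g^{(t)})^2/(L\delta^2)\}$, because the chosen descent direction $\dd^{(t)}$ satisfies $\langle-\nabla f(\xx^{(t)}),\dd^{(t)}\rangle\geq g^{(t)}$ (the \afw{} rule selects the better of the Frank--Wolfe and away directions, and the \bpfw{} blended criterion selects a direction whose directional gap is at least $g^{(t)}$) and has norm at most $\delta$; crucially the lower bound is phrased purely through $g^{(t)}$, $L$, and $\delta$. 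For a drop step, line search guarantees only $h_{t+1}\leq h_t$, again independent of the decomposition. Since Line 4b leaves $\xx^{(t+1)}$ untouched, $h_t$ is unaffected by the re-representation and is non-increasing across all steps.

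The hard part will be showing that the counting bound ``at least half of the first $t$ steps are good'' survives the re-representation in Line 4b, since changing the stored convex combination changes the away vertices and step-size caps and hence the drop-step dynamics. The key observation that resolves this is that Line 4b only ever shrinks the active set: it outputs $\cS^{(t+1)}\subseteq\cT^{(t+1)}$, where $\cT^{(t+1)}$ is produced by the \ccu{} in Line 4a. Writing $a_t := |\cS^{(t)}|$, a good step enlarges the active set by at most one and a drop step removes exactly one vertex before Line 4b is applied, and Line 4b can only decrease $a_{t+1}$ further; hence with $g_t,b_t$ the numbers of good and drop steps among the first $t$ iterations, summing the per-step increment bounds gives
\begin{align*}
1 \leq a_t \leq a_0 + g_t - b_t = 1 + g_t - b_t,
\end{align*}
so $b_t\leq g_t$ and therefore $g_t\geq t/2$, with $a_0=1$ since $\cS^{(0)}=\{\xx^{(0)}\}$.

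Finally I would combine these pieces in the usual way: using $h_t\leq g^{(t)}$, each of the at least $t/2$ good steps decreases $h_t$ by at least $\tfrac12\min\{h_t,h_t^2/(L\delta^2)\}$ while no step increases it, and the standard induction on this recursion yields $h_t\leq 4L\delta^2/t$. Since every inequality invoked is stated entirely in terms of $(\xx^{(t)})$, $f$, $L$, $\delta$, and the \emph{size} dynamics of the active set---all of which are either unaffected by, or only improved by, the re-representation in Line 4b---the guarantee is convex-combination-agnostic in the sense of Definition~\ref{def:cca}, and the argument is identical for \afw{} and \bpfw{} up to the step-classification specific to each.
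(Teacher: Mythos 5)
Your proof is correct and follows essentially the same route as the paper's: the paper likewise observes that the only place the analysis of \citet{tsuji2022pairwise} (which also covers \afw{}) touches the convex decomposition is the drop-step count, and that replacing the active set by a smaller one can only reduce drop steps. You simply make this explicit via the counting inequality $1 \leq a_t \leq 1 + g_t - b_t$, which is a more detailed rendering of the same argument rather than a different one.
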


\begin{theorem}[Linear convergence rates of \afw{} and \bpfw{}]\label{thm:afw_bpfw_linear}
    Let $\cC\subseteq \R^n$ be a polytope of diameter $\delta > 0$ and pyramidal width $\omega > 0$, and let $f\colon \cC \to \R$ be a $\mu$-strongly convex and $L$-smooth function. Then, for the iterates of Algorithms~\ref{alg:afw} (\afw{}) and~\ref{alg:bpfw} (\bpfw{}) with line-search, the convergence guarantee
    $f(\xx^{(t)}) - \min_{\xx\in\cC}f(\xx) \leq (f(\xx^{(0)}) - \min_{\xx\in\cC}f(\xx))\exp (-\frac{t}{2} \min \{\frac{1}{2}, \frac{\mu \omega^2}{4L\delta^2}\})$ is convex-combination-agnostic.
\end{theorem}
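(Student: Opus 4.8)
The plan is to revisit the standard linear-convergence analyses of \afw{} and \bpfw{} (as in \citet{lacoste2015global} and \citet{tsuji2022pairwise}) and to check, piece by piece, that every quantity they rely on is insensitive to the re-representation allowed in Definition~\ref{def:cca}. Writing $f^* = \min_{\xx\in\cC} f(\xx)$, these analyses factor into two independent parts: (i) a multiplicative per-iteration progress bound $f(\xx^{(t+1)}) - f^* \le (1-\rho)\,(f(\xx^{(t)}) - f^*)$ on ``good'' (non-drop) steps, with $\rho = \min\{\tfrac12, \tfrac{\mu\omega^2}{4L\delta^2}\}$; and (ii) a counting argument showing that at least $t/2$ of the first $t$ iterations are good steps. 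I would treat these separately and argue that each survives the modification \textbf{4a}--\textbf{4b}.

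For part (i), the essential point is that the good-step progress is driven by the pairwise gap $\langle\nabla f(\xx^{(t)}),\, \aa^{(t)} - \vv^{(t)}\rangle$, where the \fw{} vertex $\vv^{(t)}$ is a function of $\xx^{(t)}$ alone and the away vertex $\aa^{(t)}$ is selected from the current active set. Using the characterization $\omega = \min_{F} \dist(F, \conv(\cV\setminus F))$ of \citet{pena2019polytope}, this pairwise gap is bounded below by $\omega$ times the normalized suboptimality \emph{uniformly over all active sets that represent $\xx^{(t)}$}, precisely because $\omega$ is itself a minimum over faces and hence independent of which convex decomposition is used. Combining this with $L$-smoothness, $\mu$-strong convexity, and line-search optimality---all properties of $f$ and of the iterate $\xx^{(t)}$, not of its representation---yields the same contraction factor $(1-\rho)$ for the modified algorithm. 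Hence replacing $(\aalpha^{(t)},\cS^{(t)})$ by any other decomposition of the same $\xx^{(t)}$ cannot weaken the per-good-step progress.

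For part (ii), I would track the potential $\phi_t = |\cS^{(t)}|$. In a \ccu, a good step increases the active set by at most one vertex ($\phi_{t+1}\le\phi_t+1$), whereas an away/pairwise drop step deletes the away vertex ($\phi_{t+1}\le\phi_t-1$), and $\phi_t\ge 1$ throughout. The crucial observation is that step \textbf{4b} only ever removes vertices, i.e.\ $\cS^{(t+1)}\subseteq\cT^{(t+1)}$, so re-representation can only \emph{decrease} $\phi_{t+1}$ relative to the unmodified run; all the inequalities above therefore remain valid. Telescoping with $\phi_0=1$ and $\phi_t\ge1$ gives that the number of drop steps does not exceed the number of good steps, i.e.\ at least $t/2$ good steps occur, exactly as in the unmodified analysis. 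Assembling the two parts---chaining the contraction over the at-least-$t/2$ good steps, and noting drop steps never increase $f$ under line-search---yields
\[
f(\xx^{(t)}) - f^* \le (f(\xx^{(0)}) - f^*)(1-\rho)^{t/2} \le (f(\xx^{(0)}) - f^*)\exp\!\left(-\tfrac{t}{2}\min\!\left\{\tfrac12, \tfrac{\mu\omega^2}{4L\delta^2}\right\}\right),
\]
which is the claimed guarantee, now established for the modified \caa{}.

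The main obstacle I anticipate is part (ii): one must argue carefully that re-representation cannot manufacture extra drop steps or corrupt the potential bookkeeping---in particular handling \fw{} steps that collapse the active set to a singleton and the pairwise steps of \bpfw{}---and confirm that shrinking the active set via \textbf{4b} only ever helps the count. Part (i) is comparatively routine once the representation-independence of $\omega$ is invoked, since the iterate $\xx^{(t)}$ and all of $f$'s constants are untouched by re-representation.
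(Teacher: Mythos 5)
Your proposal is correct and follows essentially the same route as the paper's own (much terser) proof: the paper likewise observes that the analyses of \citet{lacoste2015global} and \citet{tsuji2022pairwise} use the convex decomposition only in the drop-step counting argument, and that replacing the active set by a smaller one representing the same iterate can only decrease the number of drop steps, so the rate is convex-combination-agnostic. Your write-up simply makes explicit the two ingredients the paper leaves implicit---that the per-good-step contraction via the pyramidal width holds uniformly over all vertex decompositions of $\xx^{(t)}$, and that the potential $|\cS^{(t)}|$ bookkeeping survives step \textbf{4b}.
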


\section{The pivoting meta algorithm}\label{sec:pm}
\begin{algorithm}[t]
\SetKwInput{Input}{Input} \SetKwInput{Output}{Output}
\SetKwComment{Comment}{$\triangleright$\ }{}
\caption{Pivoting meta algorithm (\pm), $(\aalpha^{(T)}, \cS^{(T)}, \xx^{(T)})= \pm(\xx^{(0)})$}\label{alg:pm}
\Input{$\xx^{(0)} \in \cV$.} 
  \Output{$\aalpha^{(T)}\in\Delta_{|\cV|}$, $\cS^{(T)} = \{\ss\in \cV\mid \alpha_\ss^{(T)} > 0\}$, and $\xx^{(T)}\in\cC$, such that $\xx^{(T)} = \sum_{\ss\in \cS^{(T)}} \alpha_\ss^{(T)} \ss$.}
  \hrulealg
     $M^{(0)} \gets \left(\tilde{\xx}^{(0)}, D_n\right)\in \R^{(n + 2) \times (n + 2)}$\label{line:pm_M0}\\
      $\aalpha^{(0)}\in\Delta_{|\cV|}$ s.t. for all $\ss\in\cV$,  $\alpha_\ss^{(0)} \gets\begin{cases}
            1,& \text{if} \ \ss = \xx^{(0)}\\
            0, & \text{if} \ \ss\in\cV\setminus\{\xx^{(0)}\}
            \end{cases}$\label{line:pm_a0}\\
    $\cS^{(0)} \gets \{\ss \in \cV \mid \alpha_\ss^{(0)} > 0\} = \{\xx^{(0)}\}$\label{line:pm_S0}\\
    \For(\label{line:pm_for_loop}){$t= 0, 1, \ldots, T-1 $}{
            $(\bbeta^{(t+1)}, \cT^{(t+1)}, \xx^{(t+1)}) \gets \ccu(\aalpha^{(t)}, \cS^{(t)}, \xx^{(t)})$\Comment*[f]{see Algorithm~\ref{alg:ccu}} \label{line:pm_ccu} \\
            $(M^{(t+1)}, \aalpha^{(t+1)}, \cS^{(t+1)}) \gets \asc(M^{(t)}, \beta^{(t+1)}, \cT^{(t+1)}, \cT^{(t+1)}\setminus\cS^{(t)})$\Comment*[f]{see Algorithm~\ref{alg:asc}}\label{line:pm_asc}
            }
\end{algorithm}

We now introduce the \emph{pivoting meta algorithm (\pm)} in Algorithm~\ref{alg:pm}, a drop-in modification applicable to existing \caa{}s, as shown in Algorithm~\ref{alg:caa}. This ensures that the size of the modified active set $\cS^{(t)}$ remains bounded by $n+1$ for all $t \in \{0, 1, \ldots, T\}$, while preserving the convex-combination-agnostic properties of the original \caa{}.

The main idea behind \pm{} is to modify the active set obtained from \ccu{} in Line~\ref{line:pm_ccu} with the \emph{active set cleanup algorithm} (\asc{}), presented in Algorithm~\ref{alg:asc}. 
\asc{} takes as arguments the new weight vector $\bbeta^{(t+1)}\in\Delta_{|\cV|}$, the new active set $\cT^{(t+1)} = \{\ss\in\cV\mid \beta^{(t+1)}_\ss > 0\}$, the set difference between the upcoming and the current active set $\cD^{(t)}:=\cT^{(t+1)}\setminus\cS^{(t)}\subseteq \cT^{(t+1)}$, and a matrix $M^{(t)}\in\R^{(n+2)\times(n+2)}$, such that the following hold:
\begin{enumerate}
    \item $M^{(t)}$ is invertible, $M^{(t)}_{n+1,:} \geq \zeros^\intercal$, and $M^{(t)}_{n+2,:} \geq \ones^\intercal$.
    \item  For all $i\in[n+2]$, $M^{(t)}_{n+1,i} = 0$ implies that there exists an $\ss\in\cS^{(t)}$ such that $\tilde{\ss} = M^{(t)}_{:, i}$.
    \item For all $\ss\in\cS^{(t)}$ there exists an $i\in[n+2]$ such that $M^{(t)}_{:, i} = \tilde{\ss}$,
\end{enumerate}
In Line~\ref{line:pm_asc} of \pm{}, \asc{} then constructs the matrix $M^{(t+1)}$, the modified weight vector $\aalpha^{(t+1)}\in\Delta_{|\cV|}$ and the modified active set $\cS^{(t+1)} = \{\ss\in\cV\mid \alpha^{(t+1)}_\ss > 0\}$, such that
$\xx^{(t+1)} = \sum_{\ss\in\cT^{(t+1)}}\beta_\ss^{(t+1)}\ss = \sum_{\ss\in\cS^{(t+1)}}\alpha_\ss^{(t+1)}\ss$, $|\cS^{(t+1)}| \leq n + 1$, and the three properties above are now satisfied for $t+1$.

By the definition of \ccu{}s, $\cD^{(t)}$ is either empty or contains exactly one vertex $\vv^{(t)}\in\cT^{(t+1)}\setminus\cS^{(t)}$.
The former case is straightforward as the size of the active set does not increase. In the latter case, \asc{} performs a pivoting update akin to the simplex algorithm that guarantees that $|\cS^{(t+1)}|\leq n + 1$. Performing pivot updates necessitates maintaining the matrix $M^{(t)}$ throughout \pm{}'s execution.
We formalize the properties of \asc{} below.
\begin{algorithm}[ht!]
\SetKwInput{Input}{Input} \SetKwInput{Output}{Output}
\SetKwComment{Comment}{$\triangleright$\ }{}
\caption{Active set cleanup algorithm (\asc), $(M, \aalpha, \cS) = \asc(N, \bbeta, \cT, \cD)$}\label{alg:asc}
\Input{$N\in \R^{(n+2) \times (n+2)}$ invertible, $\bbeta\in\Delta_{|\cV|}$, $\cT = \{\ss\in \cV\mid \beta_\ss > 0\}$, and $\cD\subseteq\cT$ such that $|\cD| \leq 1$.} 
  \Output{$M\in \R^{(n+2) \times (n+2)}$ invertible, $\aalpha\in\Delta_{|\cV|}$, and $\cS = \{\ss\in \cV\mid \alpha_\ss > 0\}$.}
  \hrulealg
            \uIf(\label{line:asc_if}){$\cD = \emptyset$}
            {
             $\llambda\in\Delta_{n+2}$ s.t. for all $i\in[n+2]$,  $\lambda_i \gets\begin{cases}
            \beta_\ss, & \text{if} \ N_{:,i} = \tilde{\ss} \ \text{for some} \ \ss\in\cT\\
            0,& \text{else}
            \end{cases}$\label{line:asc_if_lambda}\\
            $Q \gets N$\label{line:asc_if_Q}
            }
            \Else(\label{line:asc_else}){
            $\vv\in\cD$ \Comment*[f]{$\vv$ is unique}\label{line:asc_else_v}\\
            $A \gets (N, \tilde{\vv}) \in\R^{(n+2) \times (n+3)}$ \label{line:asc_A}\\
            $\mmu\in\Delta_{n+3}$ s.t. for all $i\in[n+3]$,  $\mu_i \gets\begin{cases}
            \beta_\ss, & \text{if} \ A_{:,i} = \tilde{\ss} \ \text{for some} \ \ss\in\cT\\
            0,& \text{else}
            \end{cases}$\label{line:asc_else_mu}\\
            {$\rr \gets -N^{-1} \tilde{\vv} \in \R^{n+2}$}\\ 
            {$k\in \argmin_{i\in[n+2],  r_i < 0} -\mu_i / r_i$}\label{line:asc_else_k}\\
            {$\theta^* \gets -\mu_k / r_k \geq 0$}\label{line:asc_else_theta}\\
            $\llambda\in\Delta_{n+2}$ s.t. for all $i\in[n+2]$, $\lambda_i \gets\begin{cases}
            \theta^*, & \text{if} \  i = k\\
            \mu_i + \theta^* r_i,& \text{if} \  i \neq k\\
            \end{cases}$\label{line:asc_else_lambda}\\
            $Q\in\R^{(n+2)\times(n+2)}$ s.t. for all $i\in[n+2]$, $Q_{:,i}\gets \begin{cases}
            \tilde{\vv},& \text{if} \  i =  k\\
            N_{:, i},& \text{if} \  i \neq k
            \end{cases}$\label{line:asc_else_Q}\\
    }
    $\ell \in \{ i\in[n+2]  \mid Q_{n+1,i} \neq 0\}$\\
    $M\in\R^{(n+2)\times(n+2)}$ s.t. for all $i\in[n+2]$, $M_{:, i}\gets\begin{cases}
    Q_{:, i} + Q_{:, \ell},&\text{if} \ \lambda_i = 0 \ \& \ Q_{n+1,i} = 0\\
    Q_{:, i}, & \text{else}
    \end{cases}$\label{line:asc_M}\\
    $\aalpha\in\Delta_{|\cV|}$ s.t. for all $\ss\in\cV$, $\alpha_\ss \gets\begin{cases}
        \lambda_i, & \text{if} \ \exists \ i\in[n+2] \ \text{s.t.} \ \tilde{\ss} = M_{:, i}\\
        0, & \text{for all other} \ \ss\in\cV
        \end{cases}$\label{line:asc_alpha}\\
        $\cS \gets \{\ss \in \cV \mid \alpha_\ss > 0\}$\label{line:asc_S}
\end{algorithm}

\begin{proposition}[Properties of \asc{}]\label{prop:asc}
Let $\cC\subseteq\R^n$ be a compact convex set, let $\cV = \vertices(\cC)$, let $N\in \R^{(n+2) \times (n+2)}$, let $\bbeta\in\Delta_{|\cV|}$, let $\cT = \{\ss\in \cV\mid \beta_\ss > 0\}$, and let $\cD \subseteq \cT$ such that $|\cD|\leq 1$.
Assume that the following hold:
\begin{enumerate}
    \item \label{assumpiton:asc_N_invertible} $N$ is invertible, $N_{n+1,:} \geq \zeros^\intercal$, and $N_{n+2,:} \geq \ones^\intercal$.
    \item \label{assumption:asc_N_tO^{(t)}} For all $i\in[n+2]$, $N_{n+1,i} = 0$ implies that there exists an $\ss\in\cT\setminus\cD$ such that $\tilde{\ss} = N_{:, i}$.
    \item \label{assumption:asc_T_to_N} For all $\ss\in\cT\setminus\cD$, there exists an $i\in[n+2]$ such that $N_{:, i} = \tilde{\ss}$.
\end{enumerate}
Let $(M, \aalpha, \cS) = \asc(N, \bbeta, \cT, \cD)$, where $M\in\R^{(n+2) \times (n+2)}$, $\aalpha\in\Delta_{|\cV|}$ and $\cS = \{\ss \in \cV \mid \alpha_\ss > 0\}$.
Then we have:
\begin{enumerate}[resume]
    \item \label{property:asc_M_invertible} $M$ is invertible, $M_{n+1,:} \geq \zeros^\intercal$, and $M_{n+2,:} \geq \ones^\intercal$.
    \item \label{property:asc_lambda_i} For all $i\in[n+2]$, $\lambda_i > 0$ implies that there exists an $\ss\in\cS\cap\cT$ such that $\tilde{\ss} = M_{:, i} = Q_{:,i}$.
    \item \label{property:asc_equality_S_T} $\xx:=\sum_{\ss\in \cT} \beta_\ss \ss = \sum_{\ss\in \cS} \alpha_\ss \ss$.
    \item \label{property:asc_M_to_S}
    For all $i\in[n+2]$, $M_{n+1,i} = 0$ implies that there exists an $\ss\in\cS$ such that $\tilde{\ss} = M_{:, i}$.
    \item \label{property:asc_S_to_M}
    For all $\ss\in\cS$, there exists an $i\in[n+2]$ such that $M_{:, i} = \tilde{\ss}$.
    \item  \label{property:asc_S_subseteq_T} It holds that $\cS \subseteq \cT$ and $|\cS|\leq n + 1$.
\end{enumerate}
\end{proposition}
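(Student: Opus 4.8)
The plan is to read \asc as a single simplex-style pivot that performs a Carathéodory reduction on the lifted vertices $\tilde{\ss} = (\ss^\intercal,0,1)^\intercal$, and to organize the whole argument around one master invariant: the output weight vector satisfies $Q\llambda = \tilde{\xx}$ with $\llambda\in\Delta_{n+2}$, where $\tilde{\xx} := \sum_{\ss\in\cT}\beta_\ss\tilde{\ss} = (\xx^\intercal,0,1)^\intercal$. Once this is established, Properties~\ref{property:asc_lambda_i}--\ref{property:asc_S_subseteq_T} are extracted by reading the invariant coordinatewise in rows $n+1$ and $n+2$ and combining this with the columnwise definition of $M$. I would treat the two branches together, observing that the branch $\cD=\emptyset$ is the degenerate case $Q=N$ with $\llambda$ the obvious restriction of $\bbeta$ to the columns; there the invariant is immediate from invertibility of $N$ (hence distinct columns, no double counting) and Assumption~\ref{assumption:asc_T_to_N}, and the row bounds $Q_{n+1,:}\geq\zeros^\intercal$, $Q_{n+2,:}\geq\ones^\intercal$ are inherited verbatim from Assumption~\ref{assumpiton:asc_N_invertible}.

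The substantive branch is $\cD=\{\vv\}$. First I would record two facts that make the pivot well defined. Since $\tilde{\vv}$ has a zero in coordinate $n+1$, the second hypothesis of the proposition forces $\tilde{\vv}$ not to be a column of $N$, so in $A=(N,\tilde{\vv})$ the vector $\mmu$ is exactly $\bbeta$ spread bijectively over the columns and $A\mmu = \sum_{\ss\in\cT}\beta_\ss\tilde{\ss} = \tilde{\xx}$; the row bounds transfer from $N$ to $Q$ because the only new column $\tilde{\vv}$ has entries $0$ and $1$ in rows $n+1$ and $n+2$. Second, the ratio-test index set is nonempty: from $N\rr = -\tilde{\vv}$ and $N_{n+2,:}\geq\ones^\intercal$, coordinate $n+2$ gives $\sum_i N_{n+2,i}r_i = -1$, so some $r_i<0$. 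The pivot identity is then the observation that $(\rr^\intercal,1)^\intercal$ is a kernel direction of $A$, namely $A(\rr^\intercal,1)^\intercal = N\rr+\tilde{\vv}=0$; hence $\mmu + \theta^*(\rr^\intercal,1)^\intercal$ still maps to $\tilde{\xx}$ under $A$, and the ratio-test choice of $k$ zeroes its $k$-th coordinate while keeping all coordinates nonnegative. Re-indexing the surviving coordinates onto the columns of $Q$ (which replaces $N_{:,k}$ by $\tilde{\vv}$) yields the $\lambda_i$ produced by \asc together with $Q\llambda=\tilde{\xx}$ and $\llambda\geq\zeros$. Invertibility of $Q$ is the basis-exchange lemma, valid because $r_k<0\neq 0$.

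With the invariant secured, the remaining properties drop out. Coordinate $n+1$ of $Q\llambda=\tilde{\xx}$ reads $\sum_i Q_{n+1,i}\lambda_i = 0$ with every summand nonnegative, forcing $\lambda_i>0 \Rightarrow Q_{n+1,i}=0$; the second hypothesis (for $i\neq k$) and $Q_{:,k}=\tilde{\vv}$ (for $i=k$) then identify each such column as $\tilde{\ss}$ with $\ss\in\cT$, and since the definition of $M$ only modifies columns with $\lambda_i=0$, these positive-weight columns are unchanged, giving Property~\ref{property:asc_lambda_i}. Coordinate $n+2$ gives $\sum_i \lambda_i = 1$ (every positive-weight column has $(n+2)$-entry $1$), completing $\llambda\in\Delta_{n+2}$. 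Invertibility of $Q$ forbids a zero row $n+1$, so the index $\ell$ exists; moreover $Q_{n+1,\ell}\neq 0$ forces $\lambda_\ell = 0$, which supplies at least one unused column and hence $|\cS|\leq n+1$, and combined with Property~\ref{property:asc_lambda_i} this gives $\cS\subseteq\cT$, i.e.\ Property~\ref{property:asc_S_subseteq_T}.

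Finally I would verify the post-processing $Q\mapsto M$. It adds column $\ell$ to certain other columns (never to $\ell$ itself, which is excluded by $Q_{n+1,\ell}\neq 0$), so it is a product of elementary column operations and preserves invertibility, while $M_{n+1,:}\geq\zeros^\intercal$ and $M_{n+2,:}\geq\ones^\intercal$ follow from the bounds on $Q$ since $Q_{:,\ell}$ is nonnegative in row $n+1$ and at least $1$ in row $n+2$; this is Property~\ref{property:asc_M_invertible}. Because only $\lambda_i=0$ columns are altered, $M\llambda = Q\llambda = \tilde{\xx}$, whose first $n$ coordinates give $\sum_{\ss\in\cS}\alpha_\ss\ss = \xx$ (Property~\ref{property:asc_equality_S_T}) and whose last coordinate confirms $\aalpha\in\Delta_{|\cV|}$. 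Property~\ref{property:asc_S_to_M} is immediate from the definition of $\aalpha$, and Property~\ref{property:asc_M_to_S} is precisely what the shift by $Q_{:,\ell}$ buys: any leftover vertex column with $\lambda_i=0$ has its $(n+1)$-entry pushed from $0$ to $Q_{n+1,\ell}>0$, so the only columns of $M$ with a zero in row $n+1$ are positive-weight vertex columns, which lie in $\cS$. I expect the main obstacle to be exactly this last bookkeeping step, where one must check simultaneously that the $+Q_{:,\ell}$ correction (i) re-establishes the structural invariant of Property~\ref{property:asc_M_to_S} needed to re-enter the proposition at the next iteration, (ii) leaves the convex representation untouched, and (iii) preserves invertibility --- all three resting on the single fact that invertibility of $Q$ guarantees a column with nonzero $(n+1)$-entry.
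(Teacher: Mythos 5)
Your proof is correct, and its overall architecture coincides with the paper's: treat \asc as a single simplex-style pivot, establish the invariant $Q\llambda = \tilde{\xx}$ with $\llambda \geq \zeros$ (trivially in the $\cD = \emptyset$ branch, via the kernel direction $(\rr^\intercal,1)^\intercal$ of $A$ and the ratio test in the $\cD=\{\vv\}$ branch), invoke basis exchange for invertibility of $Q$, and then extract Properties~\ref{property:asc_lambda_i}--\ref{property:asc_S_subseteq_T} by reading rows $n+1$ and $n+2$ and tracking which columns Line~\ref{line:asc_M} modifies. Where you genuinely depart from the paper is in justifying that $\rr$ has a negative entry: the paper argues by contradiction that $\rr \geq \zeros$ would make the feasible set $P=\{\ggamma \mid A\ggamma = \tilde\xx,\ \ggamma\geq\zeros\}\subseteq[0,1]^{n+3}$ unbounded along the ray $\mmu + \theta(\rr^\intercal,1)^\intercal$, whereas you read off row $n+2$ of $N\rr = -\tilde{\vv}$ directly: $\sum_i N_{n+2,i}r_i = -1$ with $N_{n+2,:}\geq\ones^\intercal$ forces some $r_i<0$. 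Your argument is more elementary and avoids introducing the polytope $P$ altogether (the paper's boundedness claim itself secretly rests on the same row-$(n+2)$ structure). Relatedly, the paper cites \citet[Theorem~3.2]{bertsimas1997introduction} both for feasibility of $\llambda$ and for linear independence of the exchanged basis, while you derive feasibility self-containedly from the kernel-direction computation and only need basis exchange (valid since $r_k\neq 0$) for invertibility; you also explicitly verify $\llambda\in\Delta_{n+2}$ via row $n+2$, a point the paper leaves implicit. These are local simplifications rather than a different route, and all the bookkeeping for $M$ (column $\ell$ never modifying itself, preservation of $M\llambda = Q\llambda$, and the push of zero-weight zero-row-$(n+1)$ columns to positive entries yielding Property~\ref{property:asc_M_to_S}) matches the paper's proof.
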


As a direct consequence, we formalize the properties of \pm{} in the main result of the paper below. 
\begin{theorem}[Properties of \pm]\label{thm:pm}
   Let $\cC\subseteq \R^n$ be a compact convex set and $\xx^{(0)} \in \vertices(\cC) = \cV$. Given a specific \caa, let $(\aalpha^{(T)}, \cS^{(T)}, \xx^{(T)})= \pm(\xx^{(0)})$ denote the output of its modification with \pm. Then, for all $t \in \{0,1,\ldots,T\}$ the following hold:
    \begin{enumerate}
        \item \label{property:pm_convex_combination} $\aalpha^{(t)}\in \Delta_{|\cV|}$ and $\cS^{(t)} = \{\ss\in \cV\mid \alpha_\ss^{(t)} > 0 \}$.
        \item\label{property:pm_M_invertible} $M^{(t)}\in\R^{(n+2)\times(n+2)}$ is invertible, $M_{n+1,:}^{(t)} \geq \zeros^\intercal$, and $M_{n+2,:}^{(t)} \geq \ones^\intercal$.
        \item \label{property:pm_x_equal_to_alpha_S_equal_to_beta_T} $\xx^{(t)}  = \sum_{\ss\in \cT^{(t)}} \beta_\ss^{(t)} \ss = \sum_{\ss\in \cS^{(t)}} \alpha_\ss^{(t)} \ss$. For $t=0$, let $\cT^{(0)} := \cS^{(0)}$ and $\bbeta^{(0)} : = \aalpha^{(0)}$.
        \item \label{property:pm_active_set_size_reduction} $\cS^{(t)}\subseteq\cT^{(t)}$ and $|\cS^{(t)}| \leq  n + 1$.
    \end{enumerate}
    Moreover, \pm's output satisfies the same convex-combination-agnostic properties \caa's output would satisfy.
\end{theorem}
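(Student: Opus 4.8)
The plan is to establish Properties~\ref{property:pm_convex_combination}–\ref{property:pm_active_set_size_reduction} by induction on $t$, carrying along as strengthened inductive hypotheses the three structural conditions on $M^{(t)}$ listed just before Proposition~\ref{prop:asc} (invertibility together with the two statements putting the columns of $M^{(t)}$ whose $(n+1)$st entry vanishes in bijection with $\{\tilde{\ss}\mid \ss\in\cS^{(t)}\}$). These are exactly the preconditions \asc{} needs, so the loop body then reduces to a bookkeeping translation of Proposition~\ref{prop:asc}. For the base case $t=0$, Properties~\ref{property:pm_convex_combination}, \ref{property:pm_x_equal_to_alpha_S_equal_to_beta_T}, and~\ref{property:pm_active_set_size_reduction} are immediate from the initialization in Lines~\ref{line:pm_a0}–\ref{line:pm_S0}, where $\cS^{(0)}=\{\xx^{(0)}\}$ and $\aalpha^{(0)}$ is the matching unit vector. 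For Property~\ref{property:pm_M_invertible} and the structural invariants I would argue directly from the block form $M^{(0)}=(\tilde{\xx}^{(0)}, D_n)$: subtracting row $n+1$ from row $n+2$ turns the last row into $(1,\zeros^\intercal,0)$, and cofactor expansion along it reduces $\det M^{(0)}$ to $\det\left(\begin{smallmatrix} I_n & \zeros \\ \ones^\intercal & 1\end{smallmatrix}\right)=1$; the sign conditions and the column correspondence are then read off the block form, the only zero in row $n+1$ sitting in the first column $\tilde{\xx}^{(0)}$.

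For the inductive step I would assume the invariants at $t$ and follow the two calls in the loop. Line~\ref{line:pm_ccu} runs \ccu{}, returning $(\bbeta^{(t+1)},\cT^{(t+1)},\xx^{(t+1)})$ with $\xx^{(t+1)}=\sum_{\ss\in\cT^{(t+1)}}\beta^{(t+1)}_\ss\ss$ and, by the definition of a \ccu{}, with $|\cT^{(t+1)}\setminus\cS^{(t)}|\le 1$, so that $\cD^{(t)}:=\cT^{(t+1)}\setminus\cS^{(t)}$ meets the input format of \asc{}. I would then verify the three hypotheses of Proposition~\ref{prop:asc} for the call $\asc(M^{(t)},\bbeta^{(t+1)},\cT^{(t+1)},\cD^{(t)})$ and simply read off its conclusions: Property~\ref{property:asc_M_invertible} yields Property~\ref{property:pm_M_invertible} at $t+1$, Property~\ref{property:asc_equality_S_T} yields Property~\ref{property:pm_x_equal_to_alpha_S_equal_to_beta_T}, Property~\ref{property:asc_S_subseteq_T} yields Property~\ref{property:pm_active_set_size_reduction}, the output format $\aalpha^{(t+1)}\in\Delta_{|\cV|}$ with $\cS^{(t+1)}=\{\ss\mid\alpha^{(t+1)}_\ss>0\}$ yields Property~\ref{property:pm_convex_combination}, and Properties~\ref{property:asc_M_to_S}–\ref{property:asc_S_to_M} reinstate the structural invariants for the next round. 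The first and third hypotheses of Proposition~\ref{prop:asc} follow immediately from the inductive invariants (the third because $\cT^{(t+1)}\cap\cS^{(t)}\subseteq\cS^{(t)}$).

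The main obstacle is the second hypothesis of Proposition~\ref{prop:asc} at this call: it requires every column of $M^{(t)}$ with a zero in row $n+1$ to correspond to a vertex of $\cT^{(t+1)}\setminus\cD^{(t)}=\cT^{(t+1)}\cap\cS^{(t)}$, whereas the inductive hypothesis only supplies a correspondence with all of $\cS^{(t)}$. The two sets differ exactly by the vertices that \ccu{} drops — those in $\cS^{(t)}\setminus\cT^{(t+1)}$, such as the away vertex of a maximal away step — and it is precisely these leftover vertex columns that must be neutralized. This is the role of the column merge in Line~\ref{line:asc_M}, which replaces each dead vertex column $Q_{:,i}$ (those with $\lambda_i=0$ and $Q_{n+1,i}=0$) by $Q_{:,i}+Q_{:,\ell}$, where $Q_{n+1,\ell}\neq 0$; since row $n+1$ of $Q$ is nonnegative, the modified column acquires a strictly positive $(n+1)$st entry and so ceases to count as a vertex column. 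I expect the heart of the argument to be checking that the else branch is well posed — that the ratio-test index $k$ in Line~\ref{line:asc_else_k} exists and that the resulting $\llambda$ stays in $\Delta_{n+2}$ — and that this pivot together with the merge simultaneously preserves invertibility (the merge is a product of elementary column operations that never touches column $\ell$), the row-sign conditions, and the identity $\xx^{(t+1)}=\sum_{\ss\in\cS^{(t+1)}}\alpha^{(t+1)}_\ss\ss$; this is exactly the content of Proposition~\ref{prop:asc}, whose proof I would treat as the technically demanding step.

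Finally, the preservation of convex-combination-agnostic properties follows from the shape of the loop together with Proposition~\ref{prop:asc}, with no further induction needed. Lines~\ref{line:pm_ccu} and~\ref{line:pm_asc} instantiate exactly steps 4a and 4b of Definition~\ref{def:cca}: Line~\ref{line:pm_ccu} is the verbatim \ccu{} call, and Line~\ref{line:pm_asc}, by Properties~\ref{property:asc_equality_S_T} and~\ref{property:asc_S_subseteq_T} of Proposition~\ref{prop:asc}, reconstructs a weight vector $\aalpha^{(t+1)}\in\Delta_{|\cV|}$ with $\xx^{(t+1)}=\sum_{\ss\in\cS^{(t+1)}}\alpha^{(t+1)}_\ss\ss$ and $\cS^{(t+1)}\subseteq\cT^{(t+1)}$, which is precisely the reconstruction permitted in step 4b. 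Hence \pm{} is an admissible modification in the sense of Definition~\ref{def:cca}, and every property proved convex-combination-agnostic for the underlying \caa{} — in particular the convergence rates of Theorems~\ref{thm:fw_sublinear}, \ref{thm:afw_bpfw_sublinear}, and~\ref{thm:afw_bpfw_linear} — transfers unchanged to its \pm{}-modification.
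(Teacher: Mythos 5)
Your proof follows the same route as the paper's (the paper's own proof is a two-line remark: Proposition~\ref{prop:asc} plus induction for Properties~\ref{property:pm_convex_combination}--\ref{property:pm_active_set_size_reduction}, and Definition~\ref{def:cca} for the preservation claim), and most of what you add is sound: the base case, including the determinant computation for $M^{(0)}$, is correct, and hypotheses 1 and 3 of Proposition~\ref{prop:asc} do follow from the inductive invariants exactly as you say. You have also put your finger on the genuinely delicate point, which the paper passes over in silence: the second hypothesis of Proposition~\ref{prop:asc} requires every column of $M^{(t)}$ with a zero in row $n+1$ to equal $\tilde{\ss}$ for some $\ss\in\cT^{(t+1)}\cap\cS^{(t)}$, whereas the invariant inherited from the previous iteration only yields $\ss\in\cS^{(t)}$, and the two differ whenever the \ccu{} drops vertices.

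However, your resolution of this obstacle is where the proof breaks, and the gap is genuine. The merge in Line~\ref{line:asc_M} is executed at the \emph{end} of an \asc{} call and is keyed on the \emph{outgoing} weights $\llambda$ (condition $\lambda_i=0$), not on the incoming weights $\bbeta$; it therefore neutralizes columns of vertices that have no weight after the current call, but it cannot neutralize, before the pivot of the current call, the columns of vertices that the current \ccu{} has just dropped. In the else branch these stale columns enter the pivot: such a column $i$ has $\mu_i=0$ (Line~\ref{line:asc_else_mu}) yet may have $r_i>0$, and if every index $j$ with $r_j<0$ has $\mu_j>0$, then $\theta^*>0$ (Line~\ref{line:asc_else_theta}) and the dropped vertex receives weight $\lambda_i=\theta^* r_i>0$, re-entering the active set. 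A concrete instance: let $\cC=\conv\{(0,0),(1,0),(0,1),(-1,1)\}\subseteq\R^2$, let $\cS^{(t)}=\{\ss_1,\ss_2,\ss_3\}$ with $\ss_1=(0,0)^\intercal$, $\ss_2=(1,0)^\intercal$, $\ss_3=(0,1)^\intercal$, uniform weights $\tfrac13$, and $M^{(t)}=(\tilde{\ss}_1,\tilde{\ss}_2,\tilde{\ss}_3,(0,0,1,1)^\intercal)$; this state satisfies all three inductive invariants. Take as \ccu{} a maximal pairwise update dropping $\ss_2$ and introducing $\vv=(-1,1)^\intercal$, so $\cT^{(t+1)}=\{\ss_1,\ss_3,\vv\}$ with uniform weights. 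Then $\rr=-(M^{(t)})^{-1}\tilde{\vv}=(-1,1,-1,0)^\intercal$, $\mmu=(\tfrac13,0,\tfrac13,0,\tfrac13)^\intercal$, $\theta^*=\tfrac13$, and $\lambda_2=\theta^* r_2=\tfrac13>0$ sits on the column of the dropped vertex $\ss_2$, whichever minimizer $k\in\{1,3\}$ the ratio test selects. Hence $\ss_2\in\cS^{(t+1)}\setminus\cT^{(t+1)}$: Property~\ref{property:asc_S_subseteq_T} cannot be invoked (its hypothesis fails), Property~\ref{property:pm_active_set_size_reduction} of Theorem~\ref{thm:pm} fails as stated, and, since step 4b of Definition~\ref{def:cca} requires the rebuilt active set to be contained in the \ccu{}'s output active set, your final paragraph on the preservation of convex-combination-agnostic properties collapses with it. To close the induction one must restore the second hypothesis \emph{before} pivoting, e.g., by first merging away every vertex-like column of $N$ whose vertex does not lie in $\cT^{(t+1)}$ (keying the merge on $\bbeta$ rather than on $\llambda$); note that the paper's own one-line proof has the identical unaddressed issue, so the weakness you identified is real---your proposal just does not repair it.
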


We will later see that the dimension dependence in Property~\ref{property:pm_active_set_size_reduction} of Theorem~\ref{thm:pm} can be replaced with $\dim(\cC)$, see Corollary~\ref{cor:active_set_pm}.

\subsection{Comparison to IRR}\label{sec:irr}
A first proposal for the reduction of the active set cardinality in \caa algorithms motivated by Carathéodory's theorem was presented in \citet{beck2017linearly} as the \emph{incremental representation reduction algorithm} (\irr{}).
It maintains two matrices throughout the algorithm, $T$ and $W$, with $T$ a product of elementary matrices and $W$ a matrix in row echelon form.
The latter will, in general, not be sparse, and the whole analysis of the authors is not considering the potential sparsity of vertices, and thus of the resulting matrices.
In contrast, \pm operates directly on a matrix formed by the (extended) vertices and explicitly includes the linear system solving step for which the algorithmic details remain at the discretion of the implementation, instead of relying on the formation of a row-echelon matrix which is rarely the preferred choice to solve sparse linear systems.
The workspace required by \pm only consists of $M^{(t)}$ and at most one additional column, which is of a fixed size of only $\cO(n_v (n+3))$, with $n_v$ the number of structural non-zero terms in a single vertex, assuming Line~\ref{line:asc_M} does not merge vertex columns into non-vertex ones in too many iterations.
This bound is typically much lower than \irr{}'s space requirements of order $\cO(n^2)$ for many applications in which the support of vertices is small.

Finally, as detailed in Section~\ref{sec:active_set_identification}, numerical errors in rank-one updates quickly accumulate beyond a practical level to compute the weights of the active set.
\irr{} requires maintaining the row-echelon matrix throughout iterations and does not specify a mechanism to start from a given non-singleton active set, which means numerical errors inevitably accumulate throughout the algorithm. \pm on the other hand can leverage rank-one updates and at any step recompute a factorization from scratch to solve the sparse linear system $M \rr = \vv$,
or leverage any alternative algorithm for linear systems, making it more flexible for numerically challenging instances.
Furthermore, \pm requires significant computational work only when $\cT^{(t+1)}$ contains a vertex not already contained in $\cS^{(t)}$, that is, when a new vertex is introduced into the active set. On steps operating on known vertices only, see Line~\ref{line:asc_if} of \asc{} (Algorithm~\ref{alg:asc}), the only subroutine in $\cO(n_v n_d)$ is Line~\ref{line:asc_M}, with $n_d$ the number of vertices dropped at the given step. In practice, $n_d$ remains quite small as one rarely drops a lot of vertices from the active set. In contrast, \irr{} always incurs a computational overhead of $\cO(n^2)$.

\section{Active set identification results}\label{sec:active_set_identification}
We will now present improvements to the bound $|\cS^{(t)}|\leq n + 1$ established for \pm in Theorem~\ref{thm:pm}. 
First, we refine the bound to $|\cS^{(t)}|\leq \dim(\cC) + 1$ for any iteration $t$. 
Then, we establish that if there exists an iteration $\Rfwt$ such that $\cS^{(t)}\subseteq\cC^* \subseteq\cC$ for all $t \geq \Rfwt$, then $|\cS^{(t)}|\leq \dim(\cC^*) + 1$ instead of $|\cS^{(t)}|\leq n + 1$ for all $t \geq \Rfwt$. This result is tied to so-called active set identification properties of some FW variants, known for a long time in specific settings \citep{guelat1986some}, and recently generalized in \citet{bomze2020active} for \afw{} when $\cC$ is a polytope and some other mild assumptions are met.\footnote{The result of \citet{bomze2020active} is proved in their paper for \afw{}. In Appendix~\ref{appendix:bpfw_asi}, we prove that a similar result also holds for \bpfw{}.}

Below, we present the main result of this section.
\begin{theorem}[Active set identification with \pm]\label{thm:active_set_pm}
    Let $\cC\subseteq \R^n$ be a compact convex set and $\xx^{(0)} \in \vertices(\cC) = \cV$. Given a specific Algorithm~\ref{alg:caa} (\caa), let $(\aalpha^{(T)}, \cS^{(T)}, \xx^{(T)})= \pm(\xx^{(0)})$ denote the output of its modification with Algorithm~\ref{alg:pm} (\pm). Suppose that there exists an iteration $\Rfwt\in\{0,1,\ldots, T\}$ such that $\cS^{(t)}\subseteq\cC^*$ for all $t\in\{\Rfwt, \Rfwt + 1 \ldots, T\}$, where $\emptyset\neq\cC^*\subseteq\cC$. Then, 
    $|\cS^{(t)}| \leq  \dim(\cC^*) + 1$ for all $t\in\{\Rfwt, \Rfwt + 1 \ldots, T\}$.
\end{theorem}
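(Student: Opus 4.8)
The plan is to leverage the invertibility of the bookkeeping matrix $M^{(t)}$ established in Theorem~\ref{thm:pm}, together with a dimension count on the homogenized active-set vertices. The guiding observation is that the vertices of $\cS^{(t)}$ appear as \emph{linearly independent} columns of $M^{(t)}$, while being simultaneously confined to a low-dimensional subspace whenever $\cS^{(t)} \subseteq \cC^*$; these two facts pull against each other and force the cardinality bound.

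First I would fix $t \in \{\Rfwt, \ldots, T\}$ and invoke Property~\ref{property:pm_M_invertible} of Theorem~\ref{thm:pm}: the matrix $M^{(t)} \in \R^{(n+2)\times(n+2)}$ is invertible, so its columns are linearly independent. By Property~\ref{property:pm_x_equal_to_alpha_S_equal_to_beta_T} of Theorem~\ref{thm:pm} together with the \asc{} guarantee (Property~\ref{property:asc_S_to_M} of Proposition~\ref{prop:asc}), each $\ss \in \cS^{(t)}$ occurs as some column $M^{(t)}_{:,i} = \tilde{\ss}$. Since the homogenization $\ss \mapsto \tilde{\ss} = (\ss^\intercal, 0, 1)^\intercal$ is injective, distinct vertices give distinct columns, so $\{\tilde{\ss} : \ss \in \cS^{(t)}\}$ is a set of exactly $|\cS^{(t)}|$ linearly independent vectors in $\R^{n+2}$.

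Next I would bound the dimension of the subspace these vectors occupy. Writing $d := \dim(\cC^*)$ and $\aff(\cC^*) = \ss_0 + V$ with $\ss_0 \in \cC^*$ and $V$ a linear space of dimension $d$, the hypothesis $\cS^{(t)} \subseteq \cC^* \subseteq \aff(\cC^*)$ lets me write each $\ss = \ss_0 + v$ with $v \in V$, so that $\tilde{\ss} = (\ss_0^\intercal, 0, 1)^\intercal + (v^\intercal, 0, 0)^\intercal$. Hence every $\tilde{\ss}$ lies in the span of $(\ss_0^\intercal, 0, 1)^\intercal$ together with the vectors $\{(v^\intercal, 0, 0)^\intercal : v \in V\}$, a subspace of $\R^{n+2}$ of dimension at most $d + 1$. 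Combining this with the linear independence from the previous step gives $|\cS^{(t)}| \leq d + 1 = \dim(\cC^*) + 1$, as claimed; specializing to $\cC^* = \cC$ and $\Rfwt = 0$ then recovers the uniform bound of Corollary~\ref{cor:active_set_pm}. The argument is short once the invertibility of $M^{(t)}$ is in hand, so there is no serious obstacle; the only point requiring care is the dimension count of this last paragraph — confirming that adjoining the constant homogenizing coordinate lifts the affine dimension $d$ of the vertices to a linear dimension of exactly $d+1$ rather than $d$, which is precisely where the ``$+1$'' in the bound originates.
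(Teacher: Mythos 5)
Your proof is correct and follows essentially the same route as the paper's: invertibility of $M^{(t)}$ forces the lifted active-set vertices $\tilde{\ss}$ (which appear as columns of $M^{(t)}$) to be linearly independent, while confinement to $\aff(\cC^*)$ caps the dimension of their span at $\dim(\cC^*)+1$, yielding the bound. The only difference is presentational: the paper argues by contradiction and leaves implicit the two facts you spell out --- that each $\ss\in\cS^{(t)}$ occurs as a column of $M^{(t)}$ (Property~\ref{property:asc_S_to_M} of Proposition~\ref{prop:asc}, maintained inductively by \pm{}) and that adjoining the homogenizing coordinate lifts affine dimension $d$ to linear dimension $d+1$ --- so your write-up is, if anything, slightly more careful.
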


The result above implies that the bound $|\cS^{(t)}| \leq n + 1$ in Theorem~\ref{thm:pm} can be refined to $|\cS^{(t)}| \leq \dim(\cC) + 1$ for any iteration $t$.
\begin{corollary}\label{cor:active_set_pm}
   Let $\cC\subseteq \R^n$ be a compact convex set and $\xx^{(0)} \in \vertices(\cC) = \cV$. Given a specific Algorithm~\ref{alg:caa} (\caa), let $(\aalpha^{(T)}, \cS^{(T)}, \xx^{(T)})= \pm(\xx^{(0)})$ denote the output of its modification with Algorithm~\ref{alg:pm} (\pm). Then, 
    $|\cS^{(t)}| \leq \dim(\cC) + 1$ for all $t\in\{0, 1, \ldots, T\}$.
\end{corollary}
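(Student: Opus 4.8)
The plan is to obtain this corollary as an immediate specialization of the more general Theorem~\ref{thm:active_set_pm}. That theorem already handles any nonempty $\cC^*\subseteq\cC$ such that the active set is eventually trapped inside $\cC^*$; so the entire strategy reduces to choosing $\cC^*$ and $\Rfwt$ appropriately and checking that the hypotheses are met trivially from the start. The natural choice is $\cC^* := \cC$ and $\Rfwt := 0$, which makes the conclusion $\dim(\cC^*)+1$ collapse precisely to the claimed $\dim(\cC)+1$.

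The only point to verify is the containment hypothesis $\cS^{(t)}\subseteq\cC^*$ for all $t\in\{\Rfwt,\Rfwt+1,\ldots,T\}$, which with our choice becomes $\cS^{(t)}\subseteq\cC$ for all $t\in\{0,1,\ldots,T\}$. I would read this off directly from Property~\ref{property:pm_convex_combination} of Theorem~\ref{thm:pm}: since $\cS^{(t)}=\{\ss\in\cV\mid \alpha_\ss^{(t)}>0\}$, we have $\cS^{(t)}\subseteq\cV=\vertices(\cC)\subseteq\cC$, and this holds for every iteration, in particular for $t\geq 0$. Moreover $\cC$ is a nonempty compact convex set by the standing assumption in Section~\ref{sec:preliminaries}, so $\cC^*=\cC\neq\emptyset$ as required by the theorem.

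With the hypotheses satisfied, applying Theorem~\ref{thm:active_set_pm} yields $|\cS^{(t)}|\leq \dim(\cC^*)+1=\dim(\cC)+1$ for all $t\in\{\Rfwt,\Rfwt+1,\ldots,T\}=\{0,1,\ldots,T\}$, which is exactly the statement of the corollary. Because all substantive work, namely the affine-dimension counting that controls the number of active (extended) vertices $\tilde{\ss}$ maintained in the matrix $M^{(t)}$, is carried by Theorem~\ref{thm:active_set_pm}, there is no genuine obstacle left at the level of the corollary: the argument is a one-line instantiation. If anything, the only care needed is the bookkeeping check that the containment holds already at $t=0$ (so that $\Rfwt=0$ is admissible), which I have just confirmed.
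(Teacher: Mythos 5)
Your proof is correct and matches the paper's own (implicit) argument: the corollary is stated as a direct instantiation of Theorem~\ref{thm:active_set_pm} with $\cC^* = \cC$ and $\Rfwt = 0$, the containment $\cS^{(t)} \subseteq \cV \subseteq \cC$ being automatic from Theorem~\ref{thm:pm}. Nothing is missing; the bookkeeping check at $t=0$ you flag is exactly the only thing to verify.
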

We now focus on the application of Theorem~\ref{thm:active_set_pm} to the setting characterized in \citet{bomze2020active}.

Consider the optimization problem \eqref{eq:opt} with $\cC\subseteq\R^n$ a polytope, $f\colon \cC \to \R$ a convex and $L$-smooth function, and the set of optimal solutions $\argmin_{\xx\in\cC} f(\xx)$ lying in the relative interior of a face $\cC^* \subseteq \cC$.
This setting is particularly relevant to the \fw{} community, encompassing applications such as sparse signal recovery, sparse regression, and sparse logistic regression, where \fw{} variants construct iterates that are sparse convex combinations of vertices of the feasible region. Second, vanilla \fw{} with line-search or short-step is known to exhibit zig-zagging behavior and, in some cases, converges at a rate of at most $\Omega(1/t^{1+\epsilon})$ for any $\epsilon > 0$ \citep{wolfe1970convergence}. This motivated the development of accelerated variants \citep{lacoste2015global,garber2016linear,braun2019blended,combettes2020boosting,garber2021frank} and the exploration of alternative step sizes \citep{wirth2023acceleration,wirth2023accelerated} to surpass the lower bound established by \citet{wolfe1970convergence}. Finally, the identification of the optimal face $\cC^*$ is a crucial step known as active set identification; not to be confused with the active sets maintained by the \caa's. Once $\cC^*$ is identified, the optimization problem \eqref{eq:opt} simplifies to one over $\cC^*$, which often has a much lower dimension compared to $\cC$ \citep{bomze2019first, bomze2020active}.

Recently, \citet{bomze2020active} provided insights into the settings where the away-step Frank-Wolfe algorithm (\afw{}) guarantees the identification of the active set after a finite number of iterations. Here, we present a simplified version\footnote{\citet{bomze2020active} also provide sufficient conditions for their result to hold when there are multiple optimizers in the relative interior of an optimal face.} of their result, \citet[Theorem~C.1]{bomze2020active}, along with our observation regarding its convex-combination-agnostic nature.
\begin{theorem}[Theorem~C.1, \citealp{bomze2020active}]\label{thm:afw_active}
Let $\cC\subseteq\R^n$ be a polytope, let $f\colon\cC\to\R$ be a convex and $L$-smooth function, and suppose that $\xx^*\in\argmin_{\xx\in\cC}f(\xx)$ is unique and $\xx^*\in\relint(\cC^*)$, where $\cC^*\in\faces(\cC)$. Then, for $T$ large enough, for the iterations of Algorithm~\ref{alg:afw} (\afw) with line-search, there exists an iteration $\Rfwt \in \{0, 1, \ldots, T\}$ such that $\xx^{(t)}\in\cC^*$ for all $t\in\{\Rfwt, \Rfwt+1,\ldots, T\}$. This property is convex-combination-agnostic.
\end{theorem}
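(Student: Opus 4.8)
The existence of the switching iteration $\Rfwt$ for \emph{standard} \afw{} is exactly \citet[Theorem~C.1]{bomze2020active}, which I would invoke as-is; all the work concerns the new clause, that the property $\cP$ = ``$\xx^{(t)}\in\cC^*$ for all $t\geq \Rfwt$'' is convex-combination-agnostic. By Definition~\ref{def:cca} this means running \afw{} with Line~\ref{line:caa_cau_call} split into a \ccu{} step (4a) and an arbitrary re-representation (4b) satisfying $\cS^{(t+1)}\subseteq\cT^{(t+1)}$, and checking that the resulting iterates still enter and stay in $\cC^*$. The plan is to re-run Bomze et al.'s argument and verify that every quantity it consumes is a function of the iterate $\xx^{(t)}$ and of the vertex \emph{set} $\cS^{(t)}$ alone, and never of the particular weights $\aalpha^{(t)}$; then splitting the line cannot change the conclusion.

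First I would isolate two representation-independent ingredients. \emph{(i) Convergence:} by Theorem~\ref{thm:afw_bpfw_sublinear} the sublinear rate of \afw{} is already convex-combination-agnostic, so $f(\xx^{(t)})\to\min_{\xx\in\cC}f(\xx)$ for the modified run; together with compactness of $\cC$ and uniqueness of $\xx^*$ this forces $\xx^{(t)}\to\xx^*$. \emph{(ii) Geometry:} set $\cc=\nabla f(\xx^*)$ and $b=\langle\cc,\xx^*\rangle$. First-order optimality gives $\langle\cc,\yy\rangle\geq b$ on $\cC$; since $\xx^*\in\relint(\cC^*)$ the gradient is constant on $\cC^*$, so $\langle\cc,\ss\rangle=b$ for every vertex $\ss\in\cC^*$; and the identifiability content underlying Bomze et al.\ is the strict separation $\langle\cc,\ss\rangle\geq b+\gamma$ for some $\gamma>0$ and every vertex $\ss\in\cV\setminus\cC^*$.

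Next I would analyse the dynamics in a neighbourhood $B$ of $\xx^*$ entered after finitely many steps (by convergence). For $\xx^{(t)}\in B$, continuity of $\nabla f$ and the gap $\gamma$ give that the Frank-Wolfe vertex $\vv^{(t)}=\argmin_{\vv\in\cV}\langle\nabla f(\xx^{(t)}),\vv\rangle$ lies in $\cC^*$, while whenever $\cS^{(t)}$ still contains a vertex outside $\cC^*$ the away vertex $\aa^{(t)}=\argmax_{\vv\in\cS^{(t)}}\langle\nabla f(\xx^{(t)}),\vv\rangle$ lies outside $\cC^*$ and its away direction has strictly larger descent than the Frank-Wolfe direction, so \afw{} selects the away step. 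The decisive point, and the reason (4b) is harmless, is that for \emph{any} convex representation of $\xx^{(t)}$,
\[
\sum_{\ss\in\cS^{(t)}\setminus\cC^*}\alpha_\ss^{(t)} \;\leq\; \frac{\langle\cc,\xx^{(t)}\rangle-b}{\gamma}\;\xrightarrow[t\to\infty]{}\;0,
\]
since this uses only the scalar $\langle\cc,\xx^{(t)}\rangle$ and the separation $\gamma$. Hence the weight on any outside vertex is eventually small enough that the line search caps the selected away step at its maximal value, i.e.\ it is a drop step that deletes that vertex. Because (4b) only prunes and $\cT^{(t+1)}\subseteq\cS^{(t)}\cup\{\vv^{(t)}\}$ with $\vv^{(t)}\in\cC^*$, no new outside vertex is ever introduced inside $B$; as $\cV$ is finite, all outside vertices are dropped after finitely many iterations, whereupon $\xx^{(t)}\in\cC^*$. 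Once this holds, the defining property of a face forces $\cS^{(t)}\subseteq\cC^*$ for \emph{every} representation, and since then $\vv^{(t)},\aa^{(t)}\in\cC^*$ the update stays in the convex set $\cC^*$, so $\xx^{(t)}\in\cC^*$ is preserved for all later $t$.

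The main obstacle is the finite-termination step: turning ``total outside weight $\to0$'' into ``every selected away step is maximal'' needs a uniform lower bound over $B$ on the descent extractable from the away direction (available from $\gamma$ and $L$-smoothness), so that the unconstrained optimal step provably exceeds the vanishing maximal feasible step $\alpha^{(t)}_{\aa^{(t)}}/(1-\alpha^{(t)}_{\aa^{(t)}})$. This quantitative estimate is the heart of \citet{bomze2020active}; my contribution is only to check that its three inputs --- convergence of $\xx^{(t)}$, the $\argmax$ defining $\aa^{(t)}$ over the vertex set $\cS^{(t)}$, and the outside-weight bound above --- are all insensitive to the choice of convex combination, so that inserting step (4b) alters none of them. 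The analogous statement for \bpfw{} in Appendix~\ref{appendix:bpfw_asi} would follow from the same template, with the Frank-Wolfe/away comparison replaced by the local pairwise comparison.
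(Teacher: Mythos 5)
Your proposal is correct, and it reaches the conclusion by a genuinely more explicit route than the paper. The paper's own proof is a citation-and-inspection argument: the statement for standard \afw{} is \citet[Theorem~C.1]{bomze2020active}; the reduction of Theorem~C.1 to their Theorem~4.3 uses no active-set arguments; and Theorem~4.3 together with all results it builds upon ``can be verified'' to be convex-combination-agnostic --- no mechanism is exhibited. You instead reconstruct the identification mechanism directly on the polytope and isolate an explicit representation-independent certificate: the bound $\sum_{\ss\in\cS^{(t)}\setminus\cC^*}\alpha^{(t)}_\ss \leq (\langle\cc,\xx^{(t)}\rangle-b)/\gamma$ (with $\cc=\nabla f(\xx^*)$, $b=\langle\cc,\xx^*\rangle$, and $\gamma$ your separation margin), valid for \emph{every} convex representation of $\xx^{(t)}$, which is precisely why step (4b) of Definition~\ref{def:cca} cannot disturb the drop-step argument; persistence then follows from the defining property of the face $\cC^*$. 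This buys two things the paper's proof does not make visible: a checkable invariant in place of an inspection claim, and independence from the simplex lifting used by \citet{bomze2020active} --- on the simplex the weight vector \emph{is} the iterate, so a re-representation step changes the lifted trajectory itself, and agnosticity of the simplex-based proofs is less immediate than the paper's wording suggests. The paper's route buys brevity and keeps all quantitative estimates, and the exact hypotheses under which they hold, inside the cited reference. Two points to tighten in your write-up: the phrase ``the gradient is constant on $\cC^*$'' should say that the linear functional $\langle\cc,\cdot\rangle$ is constant on $\cC^*$ (a linear function minimized over a convex set at a relative-interior point is constant on that set); and the strict separation $\gamma>0$ is a strict-complementarity-type hypothesis effectively imported from \citet[Theorem~C.1]{bomze2020active} rather than a consequence of uniqueness of $\xx^*$ alone (a unique minimizer whose gradient exposes a face strictly larger than $\cC^*$ would break it), so it should be flagged as inherited from the cited theorem's assumptions.
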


We similarly establish the active set identification property for \bpfw and its convex-combination agnosticity in Theorem~\ref{thm:bpfw_active}.
We obtain the following active set identification result for \pm applied to \afw and \bpfw.
\begin{corollary}[Active set identification with \pm{} applied to \afw{} and \bpfw{}]\label{corollary:asi_pm_afw}
Let $\cC\subseteq\R^n$ be a polytope, let $f\colon\cC\to\R$ be a convex and $L$-smooth function, and suppose that $\xx^*\in\argmin_{\xx\in\cC}f(\xx)$ is unique and $\xx^*\in\relint(\cC^*)$, where $\cC^*\in\faces(\cC)$.
Let $(\aalpha^{(t)}, \cS^{(t)}, \xx^{(t)})= \pm(\xx^{(0)})$ denote the iterations of Algorithms~\ref{alg:afw} (\afw) or~\ref{alg:bpfw} (\bpfw) with line-search modified with Algorithm~\ref{alg:pm} (\pm). Then, there exists an iteration $\Rfwt \geq 0$ such that $|\cS^{(t)}| \leq \dim(\cC^*) + 1$ for all $t\geq \Rfwt$.
\end{corollary}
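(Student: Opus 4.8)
The plan is to combine three ingredients that are already in place: the active set identification property of \afw{} and \bpfw{} (Theorem~\ref{thm:afw_active} and Theorem~\ref{thm:bpfw_active}), the fact that this property is convex-combination-agnostic and therefore survives the \pm{} modification (Theorem~\ref{thm:pm}), and the refined size bound of Theorem~\ref{thm:active_set_pm}. The entire argument reduces to bridging the gap between the statement ``the iterate $\xx^{(t)}$ lies in $\cC^*$'' and the statement ``the whole active set $\cS^{(t)}$ lies in $\cC^*$'', after which Theorem~\ref{thm:active_set_pm} applies verbatim.

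First I would invoke Theorem~\ref{thm:afw_active} (for \afw{}) or Theorem~\ref{thm:bpfw_active} (for \bpfw{}). For the unmodified algorithm with line-search these guarantee an iteration $\Rfwt$ after which $\xx^{(t)}\in\cC^*$. Since each of these results explicitly asserts that the property is convex-combination-agnostic, and since \pm{} is precisely an instantiation of the modification scheme of Definition~\ref{def:cca} (it replaces the \ccu{} call by a \ccu{} followed by the representation-rewriting subroutine \asc{}, as formalized by Theorem~\ref{thm:pm}), the same conclusion transfers to the \pm{}-modified iterates: there exists $\Rfwt\geq 0$ with $\xx^{(t)}\in\cC^*$ for all $t\geq\Rfwt$.

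Next I would upgrade this statement about the iterate into one about the active set using the defining property of a face. By Theorem~\ref{thm:pm} we have, for every $t$, the representation $\xx^{(t)}=\sum_{\ss\in\cS^{(t)}}\alpha_\ss^{(t)}\ss$ with $\alpha_\ss^{(t)}>0$ for every $\ss\in\cS^{(t)}$ and $\cS^{(t)}\subseteq\cV\subseteq\cC$. Because $\cC^*\in\faces(\cC)$ and $\xx^{(t)}\in\cC^*$ is written as a strict convex combination of points of $\cC$, the face property forces each such $\ss$ into $\cC^*$; hence $\cS^{(t)}\subseteq\cC^*$ for all $t\geq\Rfwt$. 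Finally, with $\cS^{(t)}\subseteq\cC^*$ established and $\cC^*$ nonempty, Theorem~\ref{thm:active_set_pm} applies directly with this face playing the role of its $\cC^*$, yielding $|\cS^{(t)}|\leq\dim(\cC^*)+1$ for all $t\geq\Rfwt$.

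The only genuinely delicate point is the first step. It is not obvious a priori that the identification result transfers to the \pm{}-modified trajectory, since cleaning the active set alters the set of admissible away vertices and can therefore change the entire sequence of iterates; one must be confident that identification depends only on features preserved under a change of convex representation. This is exactly what the convex-combination-agnosticity claims in Theorems~\ref{thm:afw_active} and~\ref{thm:bpfw_active} are designed to license, so once that is granted the remaining two steps — the face argument and the invocation of Theorem~\ref{thm:active_set_pm} — are routine.
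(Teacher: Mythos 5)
Your proposal is correct and follows essentially the same route as the paper: invoke the identification results (Theorem~\ref{thm:afw_active} for \afw{}, Theorem~\ref{thm:bpfw_active} for \bpfw{}) together with their convex-combination-agnosticity to transfer $\xx^{(t)}\in\cC^*$ to the \pm{}-modified trajectory, convert this to $\cS^{(t)}\subseteq\cC^*$ via the face property, and then apply Theorem~\ref{thm:active_set_pm}. The paper's proof states this chain in one sentence; your write-up merely makes explicit the face argument and the role of agnosticity, both of which are exactly what the paper relies on.
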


\begin{figure*}
	\centering
	\begin{subfigure}[t]{1\textwidth}
		\centering
		\includegraphics[width=0.47\textwidth]{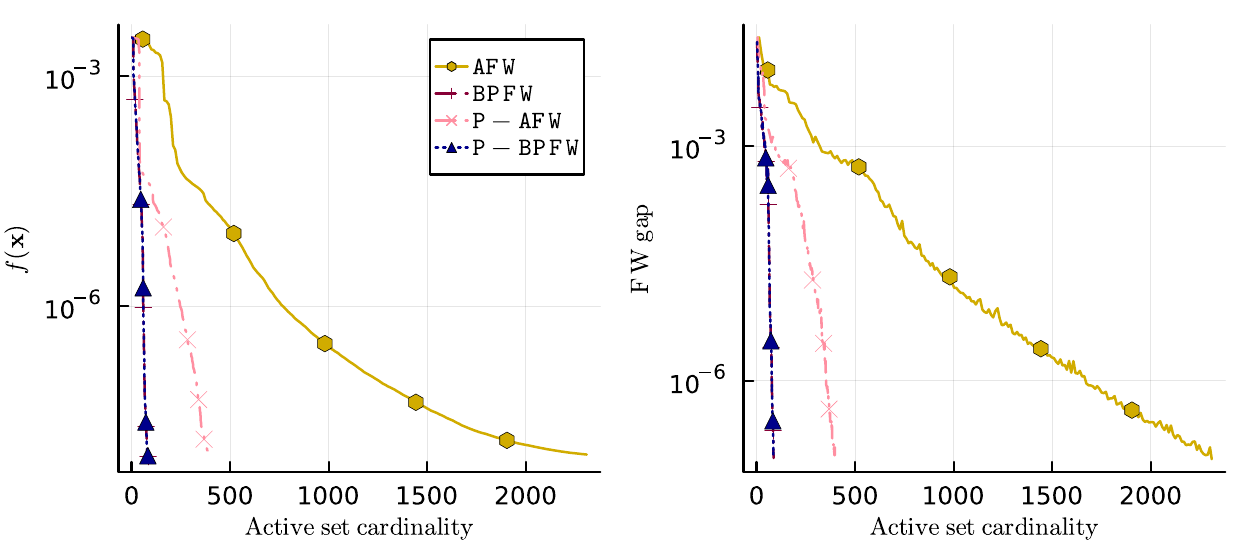}
		\includegraphics[width=0.47\textwidth]{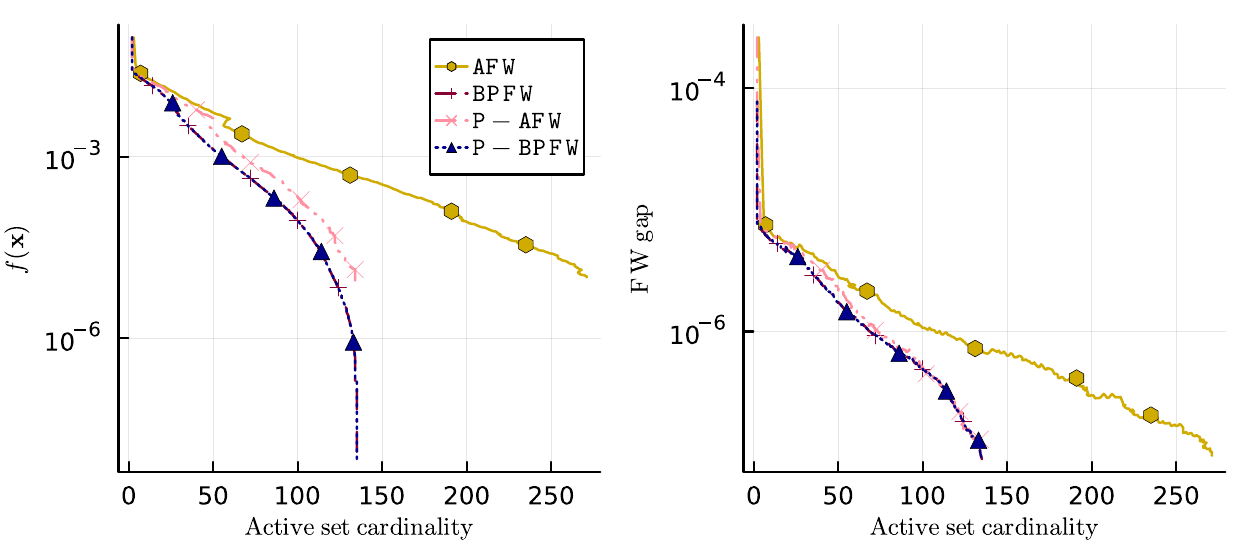}
	\end{subfigure}
	\hspace{0.05\textwidth}
	\begin{subfigure}[t]{1\textwidth}
		\centering
		\includegraphics[width=0.47\textwidth]{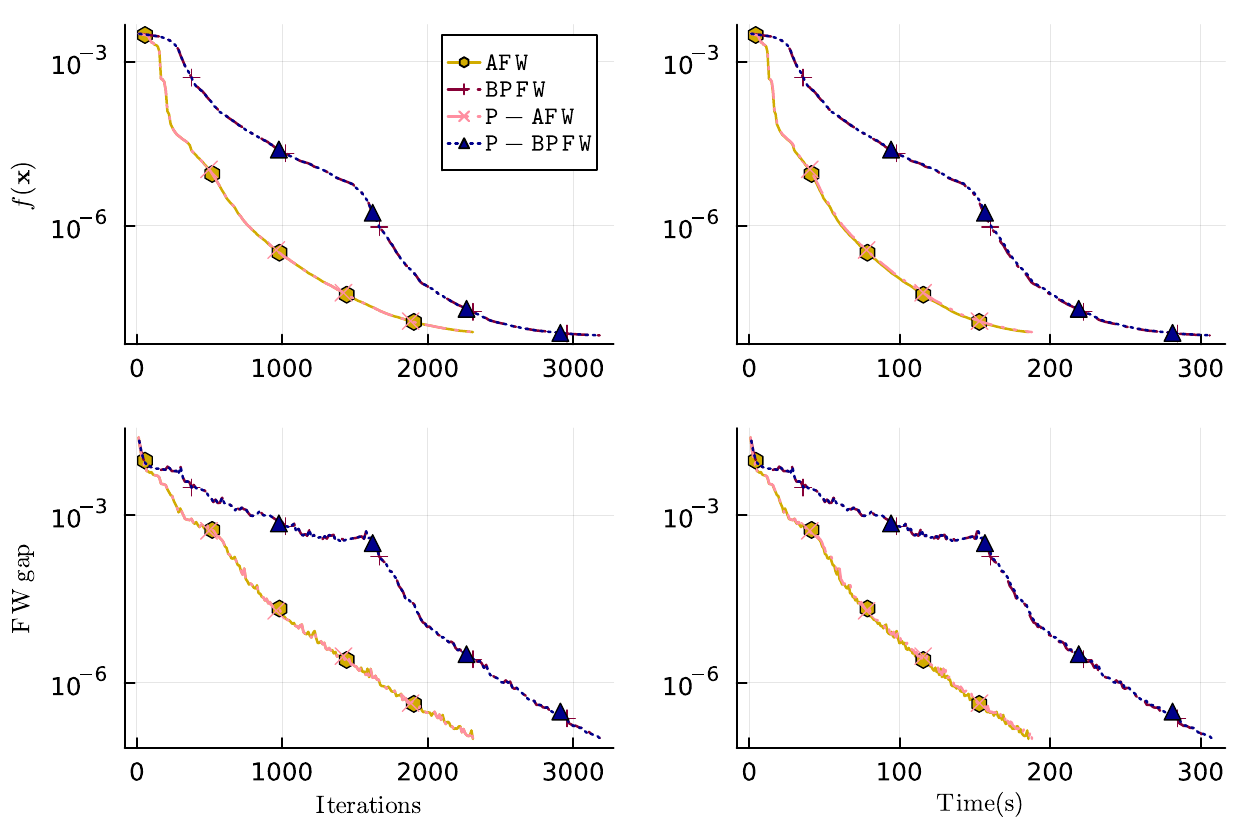}
		\includegraphics[width=0.47\textwidth]{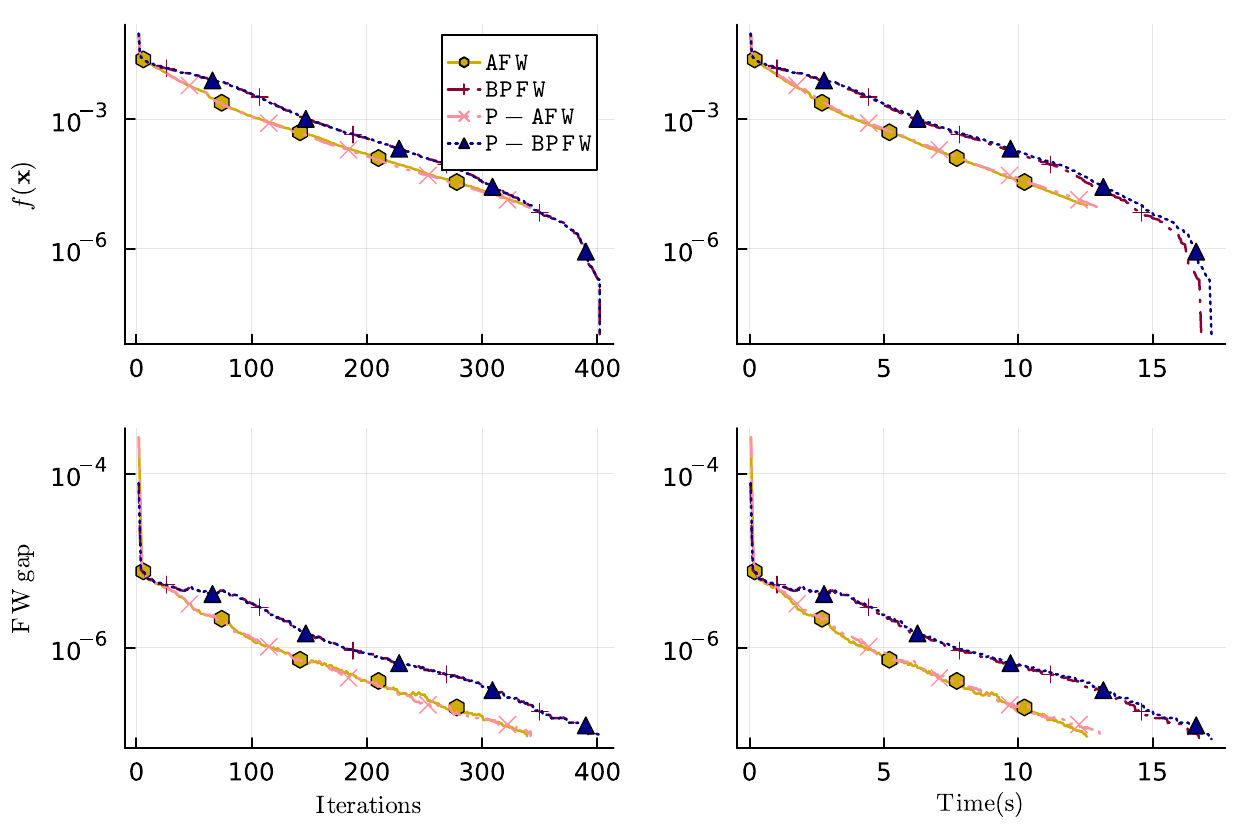}
	\end{subfigure}
	\caption{(Left) Logistic regression, $\tau=60$. (Right) Signal recovery, $\tau_f=20$, $m = 6000$, $n = 14000$. All variants are non-lazified. As can be seen in both cases, \pm can significantly improve sparsity while maintaining identical convergence rates and in cases where the algorithm (such as e.g., BPFW) produces already sparse iterates it does not harm the algorithm.}
	\label{fig:summary}
\end{figure*}

\section{Numerical experiments}\label{sec:numerical_experiments-body}

We will now provide a brief set of numerical experiments. We discuss numerical robustness and stability in Section~\ref{sec:algorithmic} and more extensive experiments as well as a detailed description of the setup in Section~\ref{sec:numerical_experiments} in the supplementary materials.

We assess \pm{} on efficiency in terms of function value
and FW gap convergence, and sparsity of the obtained solutions, compared to the standard and lazified versions (see \citep{braun2017lazifying} for details) of \afw{} and \bpfw{}, which also produce notably sparse solutions. Our algorithm is implemented in Julia \citep{bezanson2017julia} v1.9.2 and builds on the \texttt{FrankWolfe.jl} package \citep{besanccon2022frankwolfe}.
The sparse linear systems are solved with the LU decomposition of the \texttt{UMFPACK} library \citep{umfpack}.
Plots are log-linear, function values are shifted so that the minimum on each plot reaches $10^{-8}$. The prefix \texttt{L-} denotes the lazified version of an algorithm, the prefix \texttt{P-} for the \pm variant.

\subsection{Sparse logistic regression}

We run all algorithms on a logistic regression problem with an $\ell_1$-norm ball constraint:
\begin{align*}
	\min_{\xx : \|\xx\|_1 \leq \tau} \; & \frac1m \sum_{i=1}^{m} \log(1+\exp (-y_i \mathbf{a}_i^\intercal \xx).
\end{align*}
where $m$ is the number of samples, $\tau > 0$ is the $\ell_1$-norm ball radius, $y_i \in \{-1,1\}$ encodes the class and $\mathbf{a}_i$ the feature vector for the $i$th sample.
We use the Gisette dataset \citep{guyon2004result}, which contains 5000 features, we run logistic regression on the validation set containing only 1000 samples and thus more prone to overfitting without a sparsity-inducing regularization constraint.
The convergence of the pivoting variants of both \afw{} and \bpfw{} converge similarly in both function value and FW gap as their standard counterparts as shown in
Figure~\ref{fig:summary} (left).
Even though \bpfw{} typically maintains a smaller active set, it converges at a slower rate than the away-step \fw{} variants, both in function value and FW gap.
\texttt{P}-\afw{} drastically improves the sparsity of the \afw{} iterates, while maintaining the same convergence in function value and FW gap. This highlights one key property of our meta algorithm: it can be adapted to several \fw{} variants, benefiting from their convergence rate while improving their sparsity.

\subsection{Sparse signal recovery}

We assess \pm{} on a sparse signal recovery problem:
\begin{align*}
	\min_{\xx : \|\xx\|_1 \leq \tau}\, &  \|A\xx - \yy\|_2^2,
\end{align*}
with $A\in\mathbb{R}^{m\times n}$, $\yy\in\R^m$, $ n > m$.
We generate the entries of the sensing matrix $A$ i.i.d.~from a standard Gaussian distribution and $\yy$ by adding Gaussian noise with unit standard deviation to $A \xx_{\text{true}}$,
with $\xx_{\text{true}}$ an underlying sparse vector, with $30\%$ of non-zero terms, all taking entries sampled from a standard Gaussian distribution.
The radius $\tau$ is computed as $\tau = \|\xx_{\text{true}}\|_1 / \tau_f$ for different values of $\tau_f$.

Figure~\ref{fig:summary} (right) illustrates the results of the non-lazified version of \bpfw{} and \afw{} and their pivoting counterparts.
\texttt{P-\afw{}} converges at the same rate as \afw in function value and FW gap while being faster than both \bpfw{} variants, terminating before them, while maintaining an active set twice as sparse as \afw.

\clearpage

\subsubsection*{Acknowledgments}
We thank Zev Woodstock for providing valuable feedback for an earlier version of this manuscript.
Research reported in this paper was partially supported by the Deutsche Forschungsgemeinschaft (DFG, German Research Foundation) under Germany's Excellence Strategy – The Berlin Mathematics Research Center MATH$^+$ (EXC-2046/1, project ID 390685689, BMS Stipend).
Mathieu Besançon was partially supported by MIAI at Université Grenoble Alpes (ANR-19-P3IA-0003).


\bibliography{bibliography}





\clearpage

\onecolumn

\section*{Checklist}

%

\begin{enumerate}

	\item For all models and algorithms presented, check if you include:
	\begin{enumerate}
		\item A clear description of the mathematical setting, assumptions, algorithm, and/or model. [Yes/No/Not Applicable]
		Yes, all relevant properties of our algorithms are discussed and presented rigorously. See Sections~\ref{sec:preliminaries}--\ref{sec:algorithmic}.
		\item An analysis of the properties and complexity (time, space, sample size) of any algorithm. [Yes/No/Not Applicable]
		Yes. See Sections~\ref{sec:preliminaries}--\ref{sec:algorithmic}.
		\item (Optional) Anonymized source code, with specification of all dependencies, including external libraries. [Yes/No/Not Applicable]
		{Yes.}
	\end{enumerate}

	\item For any theoretical claim, check if you include:
	\begin{enumerate}
		\item Statements of the full set of assumptions of all theoretical results. [Yes/No/Not Applicable]
		Yes, our theoretical results strive to be self-contained. See Sections~\ref{sec:preliminaries}--\ref{sec:algorithmic}.
		\item Complete proofs of all theoretical results. [Yes/No/Not Applicable]
		Yes, see Appendix~\ref{sec.miss_proof}. 
		\item Clear explanations of any assumptions. [Yes/No/Not Applicable] 
		Yes. See Sections~\ref{sec:preliminaries}--\ref{sec:algorithmic}.
	\end{enumerate}

	\item For all figures and tables that present empirical results, check if you include:
	\begin{enumerate}
		\item The code, data, and instructions needed to reproduce the main experimental results (either in the supplemental material or as a URL). [Yes/No/Not Applicable]
		{The instructions are in the README of the supplementary directory.}
		\item All the training details (e.g., data splits, hyperparameters, how they were chosen). [Yes/No/Not Applicable]
		{Yes, everything available in the supplementary materials.}
		\item A clear definition of the specific measure or statistics and error bars (e.g., with respect to the random seed after running experiments multiple times). [Yes/No/Not Applicable] {Not Applicable}
		\item A description of the computing infrastructure used. (e.g., type of GPUs, internal cluster, or cloud provider). [Yes/No/Not Applicable]
		{Yes, beginning of Section 11.}
	\end{enumerate}
	
	\item If you are using existing assets (e.g., code, data, models) or curating/releasing new assets, check if you include:
	\begin{enumerate}
		\item Citations of the creator If your work uses existing assets. [Yes/No/Not Applicable]
		Yes.
		\item The license information of the assets, if applicable. [Yes/No/Not Applicable]
		Not applicable.
		\item New assets either in the supplemental material or as a URL, if applicable. [Yes/No/Not Applicable]
		Not applicable.
		\item Information about consent from data providers/curators. [Yes/No/Not Applicable]
		Not applicable.
		\item Discussion of sensible content if applicable, e.g., personally identifiable information or offensive content. [Yes/No/Not Applicable]
		Not applicable.
	\end{enumerate}
	
	\item If you used crowdsourcing or conducted research with human subjects, check if you include:
	\begin{enumerate}
		\item The full text of instructions given to participants and screenshots. [Yes/No/Not Applicable]
		Not applicable.
		\item Descriptions of potential participant risks, with links to Institutional Review Board (IRB) approvals if applicable. [Yes/No/Not Applicable]
		Not applicable.
		\item The estimated hourly wage paid to participants and the total amount spent on participant compensation. [Yes/No/Not Applicable]
		Not applicable.
	\end{enumerate}
	
\end{enumerate}

\clearpage

\section{Missing proofs}\label{sec.miss_proof}

\begin{proof}[Proof of Theorem~\ref{thm:fw_sublinear}]
    The convergence result does not depend on any properties of convex combinations \citep{jaggi2013revisiting}.
\end{proof}
\begin{proof}[Proof of Theorem~\ref{thm:afw_bpfw_sublinear}]
    The proof in \citet{tsuji2022pairwise} does not require any information on the convex combination except when determining an upper bound on the number of so-called drop steps, steps which drop certain vertices from the active set. Since the number of drop steps can only decrease when replacing the active set at iteration $t$ with a potentially smaller active set, the convergence rate guarantee is convex-combination-agnostic. The proof in \citet{tsuji2022pairwise} designed for \bpfw{} also applies to \afw{}.
\end{proof}
\begin{proof}[Proof of Theorem~\ref{thm:afw_bpfw_linear}]
    The argument is identical to the one in the proof of Theorem~\ref{thm:afw_bpfw_sublinear}.
\end{proof}

\begin{proof}[Proof of Proposition~\ref{prop:asc}]
The proof is organized as follows: We first prove Properties~\ref{property:asc_M_invertible}--\ref{property:asc_equality_S_T} depending on whether the if (Line~\ref{line:asc_if}) or the else (Line~\ref{line:asc_else}) clauses are executed. Then, we prove Properties~\ref{property:asc_M_to_S}--\ref{property:asc_S_subseteq_T}.
First, suppose that $\cD = \emptyset$.
Then:
\begin{enumerate}
    \item[\ref{property:asc_M_invertible}.] By Lines~\ref{line:asc_if_Q} and~\ref{line:asc_M}, $M$ is obtained from $Q = N$ via elementary column additions. By Assumption~\ref{assumpiton:asc_N_invertible}, Property~\ref{property:asc_M_invertible} is satisfied.
    \item[\ref{property:asc_lambda_i}.] Let $i\in[n+2]$ such that $\lambda_i > 0$. By Lines~\ref{line:asc_if_lambda} and~\ref{line:asc_if_Q}, there exists an $\ss\in\cT$ such that $\tilde{\ss} = Q_{:,i}$. By Line~\ref{line:asc_M}, $M_{:,i}=Q_{:,i}$. By Lines~\ref{line:asc_alpha} and~\ref{line:asc_S}, $\ss\in\cS$, proving Property~\ref{property:asc_lambda_i}.
    \item[\ref{property:asc_equality_S_T}.]
    By Lines~\ref{line:asc_if}--\ref{line:asc_if_Q} and Assumption~\ref{assumption:asc_T_to_N}, $Q\llambda = \tilde{\xx}$. By Lines~\ref{line:asc_M}--\ref{line:asc_S} and Property~\ref{property:asc_lambda_i},
    $\sum_{\ss\in \cS} \alpha_\ss \tilde{\ss} = M\llambda = Q\lambda = \tilde{\xx}$. Thus,  Property~\ref{property:asc_equality_S_T} is satisfied.
\end{enumerate}
Second, suppose that $\cD = \{\vv\}$, that is, we consider Line~\ref{line:asc_else}. By Assumption~\ref{assumption:asc_N_tO^{(t)}}, there does not exist an $i\in[n+2]$ such that $\tilde{\vv}=N_{:,i}$.
Let $P = \{\ggamma\in \R^{n+3} \mid A\ggamma = \tilde{\xx}, \ggamma \geq \zeros\} \subseteq [0, 1]^{n+3}$. By Lines~\ref{line:asc_else_v}--\ref{line:asc_else_mu}, $\tilde{\xx}=A\mmu$ and $\mmu$ is a feasible solution for the optimization problem
    \begin{align}\label{eq:lmp}
    \min \ & -\ee_{n+3}^\intercal \ggamma \\
    \text{subject to} \ & \ggamma\in P. \nonumber
\end{align}
Intuitively, Algorithm~\ref{alg:asc} performs one pivoting step akin to the simplex algorithm starting from the feasible solution $\mmu$.
We first prove that $\rr = -N^{-1}\tilde{\vv}$ has at least one negative entry. Suppose towards a contradiction that $\rr\geq \zeros$. Thus, 
$
    \dd =  \begin{pmatrix}
        \rr\\
        1
    \end{pmatrix}
    \geq \zeros$.
Since
$
    A\dd = -N\rr + \tilde{\vv}  =  N(-N^{-1}\tilde{\vv}) + \tilde{\vv} = \zeros,
$
the vector $\mmu + \theta \dd$ is infeasible only if one of its components is negative for some $\theta \geq 0$. Since $\mmu \geq \zeros$ and $\dd \geq \zeros$, we have $\mmu + \theta \dd \geq \zeros$ for all $\theta \geq 0$, implying that $P\subseteq[0,1]^{n+3}$ is unbounded, a contradiction.
Then:
\begin{enumerate}
    \item[\ref{property:asc_M_invertible}.]
    Since $\rr \not \geq \zeros$, it holds that
    $
        k \in \argmin_{i \in [n+2], r_i < 0}-\frac{\mu_i}{r_i}
    $
    as in Line~\ref{line:asc_else_k}
    exists. Thus, in Line~\ref{line:asc_else_Q}, Algorithm~\ref{alg:asc} constructs the matrix
    $
        Q = (N_{:, 1}, \ldots, N_{:, k-1}, \tilde{\vv}, N_{:, k+1}, \ldots, N_{:,n+2}) \in \R^{(n+2) \times (n+2)}.
    $
    The columns $N_{:, i}$, $i\neq k$, and $\tilde{\vv}$ are linearly independent \citep[Theorem~3.2]{bertsimas1997introduction}. Thus, by Assumption~\ref{assumpiton:asc_N_invertible} and Line~\ref{line:asc_else_Q}, $Q$ is invertible, $Q_{n+1, :} \geq \zeros^\intercal$, and $Q_{n+2, :} \geq \ones^\intercal$. 
    By Line~\ref{line:asc_M}, $M$ is obtained from $Q$ via elementary column additions. Thus, Property~\ref{property:asc_M_invertible} is satisfied.
    \item[\ref{property:asc_lambda_i}.] Let $i\in[n+2]$ such that $\lambda_i > 0$. By \citet[Theorem~3.2]{bertsimas1997introduction}, $Q\llambda =\tilde{\xx}$ and $\llambda\geq\zeros$. Since $\tilde{x}_{n+1} = 0$, $Q_{n+1, :}\geq \zeros^\intercal$, and $\lambda_i > 0$, we have $Q_{n+1, i}=0$. By Line~\ref{line:asc_else_Q} and Assumption~\ref{assumption:asc_N_tO^{(t)}}, there exists an $\ss\in\cT$ such that $\tilde{\ss}= Q_{:,i}$. By Line~\ref{line:asc_M}, $M_{:,i}=Q_{:,i}$. By Lines~\ref{line:asc_alpha} and~\ref{line:asc_S}, $\ss\in\cS$, proving Property~\ref{property:asc_lambda_i}.
    \item[\ref{property:asc_equality_S_T}.]
    We have already established that $Q\llambda = \tilde{\xx}$. By Lines~\ref{line:asc_M}--\ref{line:asc_S} and Property~\ref{property:asc_lambda_i},
    $\sum_{\ss\in \cS} \alpha_\ss \tilde{\ss} = M\llambda = Q\lambda = \tilde{\xx}$. Thus,  Property~\ref{property:asc_equality_S_T} is satisfied.
\end{enumerate}
Finally, we prove Properties~\ref{property:asc_M_to_S}--\ref{property:asc_S_subseteq_T} irrespective of whether the if (Line~\ref{line:asc_if}) or the else (Line~\ref{line:asc_else}) clauses are executed:
\begin{enumerate}
    \item[\ref{property:asc_M_to_S}.] Let $i\in[n+2]$ such that $M_{n+1,i} = 0$. By Line~\ref{line:asc_M}, $\lambda_i > 0$.
    By Property~\ref{property:asc_lambda_i}, there exists an $\ss\in\cS$ such that $\tilde{\ss} = M_{:, i}$. Thus, Property~\ref{property:asc_M_to_S} is satisfied.
    \item[\ref{property:asc_S_to_M}.] Let $\ss\in\cS$. By Line~\ref{line:asc_S}, $\alpha_\ss > 0$. By Line~\ref{line:asc_alpha}, there exists an $i \in[n+2]$ such that $M_{:,i} = \tilde{\ss}$. Thus, Property~\ref{property:asc_S_to_M} is satisfied. 
    \item[\ref{property:asc_S_subseteq_T}.]
    Let $\ss \in \cS$. By Line~\ref{line:asc_S}, $\alpha_\ss > 0$. By Line~\ref{line:asc_alpha}, there exists an $i\in[n+2]$ such that $\tilde{\ss} = M_{:,i}$ and $\lambda_i > 0$. By Property~\ref{property:asc_lambda_i}, $\ss\in\cT$. Thus, $\cS\subseteq\cT$. By Property~\ref{property:asc_M_invertible}, $M$ is invertible. Thus, there exists an $i\in[n+2]$ such that $M_{n+1,i} \neq 0$. Thus, for all $\ss\in\cS$, $\tilde{\ss}\neq M_{:,i}$. By Property~\ref{property:asc_S_to_M}, $|\cS| \leq n+1$. Thus, Property~\ref{property:asc_S_subseteq_T} is satisfied.
\end{enumerate}
\end{proof}

\begin{proof}[Proof of Theorem~\ref{thm:pm}]
    Properties~\ref{property:pm_convex_combination}--\ref{property:pm_active_set_size_reduction} follow from Proposition~\ref{prop:asc} and induction. Since \pm modifies the \caa as described in Definition~\ref{def:cca}, \pm{} conserves convex-combination-agnostic properties of the corresponding \caa{}.
\end{proof}

\begin{proof}[Proof of Theorem~\ref{thm:active_set_pm}]
    Let $t\in \{\Rfwt, \Rfwt+1, \ldots, T\}$ and consider the convex combination representing the current iterate $\xx^{(t)} = \sum_{s\in\cS^{(t)}} \alpha_\ss^{(t)} \ss$, where $\aalpha^{(t)}\in \Delta_{|\cV|}$ and $\cS^{(t)} = \{\ss\in\cV\mid \alpha_\ss^{(t)} > 0\}$. Suppose towards a contradiction that $|\cS^{(t)}| \geq \dim(\cC^*) + 2$.
    By Theorem~\ref{thm:pm}, $M^{(t)}$ is invertible for all $t\in\{0,1 \ldots, T\}$. Thus, for any subset $\{\pp_1, \pp_2, \ldots, \pp_{\dim(\cC^*)+2}\} \subseteq \cS^{(t)}$, it has to hold that $\tilde{\pp}_1, \tilde{\pp}_2, \ldots, \tilde{\pp}_{\dim(\cC^*)+2}$ are linearly independent. Since $\dim(\cC^*) + 2$ vectors of dimension $\dim(\cC^*) + 1$ cannot be linearly independent, we have a contradiction.
\end{proof}

\begin{proof}[Proof of Theorem~\ref{thm:afw_active}]
The statement of Theorem~\ref{thm:afw_active} without the convex-combination-agnostic property is a simplified version derived from \citet[Theorem~C.1]{bomze2020active}. The proof of Theorem~C.1 is a reduction to \citet[Theorem~4.3]{bomze2020active} that does not utilize any arguments involving the active set of \afw. Furthermore, it can be verified that \citet[Theorem~4.3]{bomze2020active} and all the results it builds upon are convex-combination-agnostic. Thus, the convex-combination-agnostic property of the statement in Theorem~\ref{thm:afw_active} is established.
\end{proof}

\begin{proof}[Proof of Corollary~\ref{corollary:asi_pm_afw}]
The result follows by combining Theorem~\ref{thm:active_set_pm} with Theorem~\ref{thm:afw_active} for \afw, and with Theorem~\ref{thm:bpfw_active} for \bpfw, and noting that $\xx^{(t)}\in\cC^*$ for $\cC^*\in\faces(\cC)$ implies $\cS^{(t)}\subseteq \cC^*$.
\end{proof}

\section{\fw{} variants}\label{sec:fw_variants}

The away-step Frank-Wolfe algorithm (\afw{}) \citep{wolfe1970convergence, guelat1986some,lacoste2015global} is presented in Algorithm~\ref{alg:afw}.
The blended pairwise Frank-Wolfe algorithm (\bpfw{}) \citep{tsuji2022pairwise} is presented in Algorithm~\ref{alg:bpfw}.

\begin{algorithm}[t]
\SetKwInput{Input}{Input} \SetKwInput{Output}{Output}
\SetKwComment{Comment}{$\triangleright$\ }{}
\caption{Away-step Frank-Wolfe algorithm (\afw{}) with line-search}\label{alg:afw}
\Input{$\xx^{(0)}\in\cV$.
} 
  \Output{$\aalpha^{(T)}\in\Delta_{|\cV|}$, $\cS^{(T)} = \{\ss\in \cV\mid \alpha_\ss^{(T)} > 0\}$, and $\xx^{(T)}\in\cC$ such that $\xx^{(T)} = \sum_{\ss\in \cS^{(T)}} \alpha_\ss^{(T)} \ss$.}
\hrulealg
$\aalpha^{(0)}\in\Delta_{|\cV|}$ s.t. for all $\ss\in\cV$,  $\alpha_\ss^{(0)} \gets\begin{cases}
            1,& \text{if} \ \ss = \xx^{(0)}\\
            0, & \text{if} \ \ss\in\cV\setminus\{\xx^{(0)}\}
            \end{cases}$\\
            $\cS^{(0)} \gets \{\ss \in \cV \mid \alpha_\ss^{(0)} > 0\} = \{\xx^{(0)}\}$\\
\For{$t = 0,\ldots, T - 1$}{
{$\aa^{(t)} \gets \argmax_{\vv\in \cS^{(t)}} \langle \nabla f(\xx^{(t)}), \vv \rangle$ \Comment*[f]{away vertex}}\\
{$\vv^{(t)} \gets \argmin_{\vv\in \cV} \langle \nabla f(\xx^{(t)}), \vv \rangle$ \Comment*[f]{\fw{} vertex}}\\
\uIf{$\langle \nabla f(\xx^{(t)}),\vv^{(t)} - \xx^{(t)}\rangle \geq  \langle \nabla f(\xx^{(t)}), \xx^{(t)} - \aa^{(t)}\rangle$}{
    {$\dd^{(t)} \gets \xx^{(t)} - \aa^{(t)}$ \Comment*[f]{away step}\label{line:afw_away_step}}\\
    {$\eta^{(t)} \gets \argmin_{\eta\in [0, \alpha^{(t)}_{\aa^{(t)}}/(1- \alpha^{(t)}_{\aa^{(t)}})]} f(\xx^{(t)} + \eta \dd^{(t)})$}\\
    {$\aalpha^{(t+1)}\in\Delta_{|\cV|}$ s.t. for all $\ss\in\cV$,  $\alpha_\ss^{(t+1)} \gets \begin{cases}
    (1+\eta^{(t)})\alpha_\ss^{(t)}, & \text{if} \ \ss\in\cV\setminus\{\aa^{(t)}\}\\
    (1+\eta^{(t)})\alpha_\ss^{(t)} - \eta^{(t)}, & \text{if} \ \ss=\aa^{(t)}\\
    \end{cases}
    $
    }\\
    }
\Else{
{$\dd^{(t)} \gets \vv^{(t)} - \xx^{(t)}$ \Comment*[f]{\fw{} step}}\\
{$\eta^{(t)} \gets \argmin_{\eta \in [0, 1]} f(\xx^{(t)} + \eta \dd^{(t)})$ }\\
{$\aalpha^{(t+1)}\in\Delta_{|\cV|}$ s.t. for all $\ss\in\cV$,  $\alpha_\ss^{(t+1)} \gets \begin{cases}
(1-\eta^{(t)})\alpha_\ss^{(t)}, & \text{if} \ \ss\in \cV\setminus \{\vv^{(t)}\}\\
(1-\eta^{(t)})\alpha_\ss^{(t)} + \eta^{(t)}, & \text{if} \ \ss = \vv^{(t)}\\
\end{cases}
$
}
}
{$\cS^{(t+1)} \gets \{\ss\in \cV\mid  \alpha_{\ss}^{(t+1)}>0\}$}\\
{$\xx^{(t+1)} \gets \xx^{(t)} + \eta^{(t)} \dd^{(t)}$}
}
\end{algorithm}

\begin{algorithm}[t]
\SetKwInput{Input}{Input} \SetKwInput{Output}{Output}
\SetKwComment{Comment}{$\triangleright$\ }{}
\caption{Blended pairwise Frank-Wolfe algorithm (\bpfw{}) with line-search}\label{alg:bpfw}
\Input{$\xx^{(0)}\in\cV$.
} 
  \Output{$\aalpha^{(T)}\in\Delta_{|\cV|}$, $\cS^{(T)} = \{\ss\in \cV\mid \alpha_\ss^{(T)} > 0\}$, and $\xx^{(T)}\in\cC$, such that $\xx^{(T)} = \sum_{\ss\in \cS^{(T)}} \alpha_\ss^{(T)} \ss$.}
\hrulealg
  $\aalpha^{(0)}\in\Delta_{|\cV|}$ s.t. for all $\ss\in\cV$,  $\alpha_\ss^{(0)} \gets\begin{cases}
            1,& \text{if} \ \ss = \xx^{(0)}\\
            0, & \text{if} \ \ss\in\cV\setminus\{\xx^{(0)}\}
            \end{cases}$\\
            $\cS^{(0)} \gets \{\ss \in \cV \mid \alpha_\ss^{(0)} > 0\} = \{\xx^{(0)}\}$\\
\For{$t = 0,\ldots, T - 1$}{
{$\aa^{(t)} \gets \argmax_{\vv\in \cS^{(t)}} \langle \nabla f(\xx^{(t)}), \vv \rangle$ \Comment*[f]{away vertex}}\\
{$\ww^{(t)} \gets \argmin_{\vv\in \cS^{(t)}} \langle \nabla f(\xx^{(t)}), \vv \rangle$ \Comment*[f]{local \fw{} vertex}}\\
{$\vv^{(t)} \gets \argmin_{\vv\in \cV} \langle \nabla f(\xx^{(t)}), \vv \rangle$ \Comment*[f]{\fw{} vertex}}\\
\uIf{$\langle \nabla f(\xx^{(t)}),\vv^{(t)} - \xx^{(t)}\rangle \geq  \langle \nabla f(\xx^{(t)}), \ww^{(t)} - \aa^{(t)}\rangle$}{
    {$\dd^{(t)} \gets \ww^{(t)} - \aa^{(t)}$ \Comment*[f]{local pairwise step}\label{line:bpfw_local_pairwise_step}}\\
    {$\eta^{(t)} \gets \argmin_{\eta\in [0, \alpha_{\aa^{(t)}}^{(t)}]} f(\xx^{(t)} + \eta \dd^{(t)})$}\\
    {$\aalpha^{(t+1)}\in\Delta_{|\cV|}$ s.t. for all $\ss\in\cV$,  $\alpha_\ss^{(t+1)} \gets \begin{cases}
    \alpha_\ss^{(t)}, & \text{if} \ \ss\in\cV\setminus\{\aa^{(t)}, \ww^{(t)}\}\\
    \alpha_\ss^{(t)} -\eta^{(t)}, & \text{if} \ \ss=\aa^{(t)}\\
     \alpha_\ss^{(t)} +\eta^{(t)}, & \text{if} \ \ss=\ww^{(t)}
    \end{cases}
    $
    }\\
    }
\Else{
{$\dd^{(t)} \gets \vv^{(t)} - \xx^{(t)}$ \Comment*[f]{\fw{} step}}\\
{$\eta^{(t)} \gets \argmin_{\eta \in [0, 1]} f(\xx^{(t)} + \eta \dd^{(t)})$}\\
{$\aalpha^{(t+1)}\in\Delta_{|\cV|}$ s.t. for all $\ss\in\cV$,  $\alpha_\ss^{(t+1)} \gets \begin{cases}
(1-\eta^{(t)})\alpha_\ss^{(t)}, & \text{if} \ \ss\in \cV\setminus \{\vv^{(t)}\}\\
(1-\eta^{(t)})\alpha_\ss^{(t)} + \eta^{(t)}, & \text{if} \ \ss = \vv^{(t)}\\
\end{cases}
$
}
}
{$\cS^{(t+1)} \gets \{\ss\in \cV\mid  \alpha_{\ss}^{(t+1)}>0\}$}\\
{$\xx^{(t+1)} \gets \xx^{(t)} + \eta^{(t)} \dd^{(t)}$}
}
\end{algorithm}

\section{Active set identification with the blended pairwise Frank-Wolfe algorithm}\label{appendix:bpfw_asi}

In this section, we prove for \bpfw{} the active set identification property introduced in \citet{bomze2020active} and proved for \afw{}.
We consider in this section that the feasible set is the probability simplex $\Delta_{n}$. Generalization to polytopes follows by affine invariance \citep[Appendix C]{bomze2020active}.
This means in particular that each vertex is the unit vector $\ee_i$ corresponding to a coordinate $i \in [n]$.
For convenience, we introduce the set $\cS_I$ consisting of unit vectors corresponding to the coordinates for a given active set $\cS$:
\begin{align*}
    \cS_I = \{i \in [n] \mid \ee_i \in \cS\}.
\end{align*}

\noindent
We first introduce necessary definitions.
The multiplier associated with the $i$th coordinate at a point $\xx\in \Delta_{n}$ is:
\begin{align*}
\lambda_i(\xx) = \langle\nabla f(\xx), \ee_i - \xx\rangle,
\end{align*}
allowing the partition of the coordinates into
\begin{align}
    & I(\xx) = \{i\in [n] \mid \lambda_i(\xx) = 0 \} \qquad  \text{and} \qquad I^c(\xx) = [n] \,\backslash\, I(\xx).\label{def:Iset}
\end{align}
Given a stationary point of the problem $\xx^*$ and iterates $\xx^{(t)}$ generated by \bpfw{}, we also define:
\begin{align}
    & O^{(t)} = \{i\in I^c(\xx^*) \mid \xx_i^{(t)} = 0 \},\nonumber\\
    & J^{(t)} = I^c(\xx^*) \,\backslash\, O^{(t)},\nonumber\\
    & \delta^{(t)} = \max_{i\in [n] \,\backslash\, O^{(t)}} \lambda_i(\xx^*),\nonumber\\
    & \delta_{\min} = \min_{i\in I^c(\xx^*)} \lambda_i(\xx^*), \label{def:deltamin}\\
    & h^* = \frac{\delta_{\min}}{3L + \max\{\delta^{(t)}, \delta_{\min}\} }\nonumber .
\end{align}
By complementary slackness, $\xx_i^* = 0$ for all $i \in I^c(\xx^*)$.
We now introduce a first lemma restating the conditions for the selection of the type of step in \bpfw{} in terms of multipliers.
\begin{lemma}[\bpfw{} step multipliers]\label{lemma:nonintroductionmultipliers}
Let $\cC = \Delta_n\subseteq \R^n$ be the probability simplex and let $f\colon \cC \to \R$ be a convex and $L$-smooth function. Then, for the iterates of Algorithm~\ref{alg:bpfw} (\bpfw{}) with line-search and all $t\in \{0, 1, \ldots, T-1\}$, the following hold:
\begin{enumerate}
    \item If
    $
    \max_{j\in \cS_I^{(t)}} \lambda_j(\xx^{(t)}) - \min_{i\in \cS_I^{(t)}} \lambda_i(\xx^{(t)}) \geq - \min_{i\in [n]} \lambda_i(\xx^{(t)}),
    $
    then \bpfw{} performs a pairwise step, with $\dd^{(t)} = \ee_i - \ee_j$ for $i \in \argmin_{i \in \cS_I^{(t)}} \lambda_i(\xx^{(t)})$ and $j \in \argmax_{j\in \cS_I^{(t)}} \lambda_j(\xx^{(t)})$.
    \item \label{property:nonintroductionmultipliers_2} For every $j \in [n] \,\backslash\, \cS_I^{(t)}$, if $\lambda_j(\xx^{(t)}) > 0$, then $x^{(t+1)}_j = x^{(t)}_j = 0$.
\end{enumerate}
\end{lemma}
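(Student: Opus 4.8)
The plan is to reduce both claims to elementary statements about the multipliers $\lambda_i(\xx^{(t)})$, exploiting that on $\Delta_n$ every vertex is a unit vector $\ee_i$ and that, on the support of $\xx^{(t)}$, the active set $\cS^{(t)}$ is exactly $\{\ee_i \mid i \in \cS_I^{(t)}\}$. The starting observation is that for all $i$, $\lambda_i(\xx^{(t)}) = \langle \nabla f(\xx^{(t)}), \ee_i\rangle - \langle \nabla f(\xx^{(t)}), \xx^{(t)}\rangle$, so $\lambda_i$ differs from the $i$th gradient coordinate only by a constant independent of $i$. Hence the $\argmin$ and $\argmax$ of $\langle \nabla f(\xx^{(t)}), \cdot\rangle$ over any collection of unit vectors coincide with the $\argmin$/$\argmax$ of the corresponding $\lambda_i$. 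This identifies the three vertices chosen by Algorithm~\ref{alg:bpfw}: the away vertex $\aa^{(t)} = \ee_j$ with $j \in \argmax_{j\in\cS_I^{(t)}}\lambda_j(\xx^{(t)})$, the local FW vertex $\ww^{(t)} = \ee_i$ with $i \in \argmin_{i\in\cS_I^{(t)}}\lambda_i(\xx^{(t)})$, and the global FW vertex $\vv^{(t)} = \ee_k$ with $k \in \argmin_{k\in[n]}\lambda_k(\xx^{(t)})$.

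For Property~1 I would simply rewrite both sides of the branching inequality of Algorithm~\ref{alg:bpfw}. The left-hand side is $\langle \nabla f(\xx^{(t)}), \vv^{(t)} - \xx^{(t)}\rangle = \lambda_k(\xx^{(t)}) = \min_{k\in[n]}\lambda_k(\xx^{(t)})$, while the right-hand side, using that the shift cancels in a difference of unit vectors, is $\langle \nabla f(\xx^{(t)}), \ee_i - \ee_j\rangle = \min_{i\in\cS_I^{(t)}}\lambda_i(\xx^{(t)}) - \max_{j\in\cS_I^{(t)}}\lambda_j(\xx^{(t)})$. Rearranging the inequality that triggers the pairwise branch yields precisely the hypothesis $\max_{j\in\cS_I^{(t)}}\lambda_j(\xx^{(t)}) - \min_{i\in\cS_I^{(t)}}\lambda_i(\xx^{(t)}) \geq -\min_{k\in[n]}\lambda_k(\xx^{(t)})$, and the pairwise step direction is then $\dd^{(t)} = \ww^{(t)} - \aa^{(t)} = \ee_i - \ee_j$, as stated.

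For Property~2, fix $j \in [n]\setminus\cS_I^{(t)}$; then $x_j^{(t)} = 0$ since the iterate places weight only on active coordinates. I would split on the step type. In a pairwise step, $\dd^{(t)} = \ee_i - \ee_{j'}$ with $i, j' \in \cS_I^{(t)}$, so only active coordinates change and $x_j^{(t+1)} = x_j^{(t)} = 0$. In an FW step, $\xx^{(t+1)} = (1-\eta^{(t)})\xx^{(t)} + \eta^{(t)}\ee_k$, so the only coordinate that can gain weight is the FW index $k$. The key point is that $\lambda_k(\xx^{(t)}) = \langle \nabla f(\xx^{(t)}), \vv^{(t)} - \xx^{(t)}\rangle \leq 0$, because $\vv^{(t)}$ minimizes the linear objective over $\cC$ while $\xx^{(t)}$ is feasible (this is just the sign of the FW gap). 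Thus $\lambda_j(\xx^{(t)}) > 0 \geq \lambda_k(\xx^{(t)})$ forces $j \neq k$, giving $x_j^{(t+1)} = (1-\eta^{(t)})x_j^{(t)} = 0$.

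The argument is almost entirely bookkeeping: the single genuine (if minor) observation is the nonpositivity of the FW gap, which is what rules out the FW vertex coinciding with a coordinate of strictly positive multiplier. I expect no real obstacle beyond carefully checking that the three $\argmin$/$\argmax$ selections in the algorithm line up with the multiplier orderings and that the constant shift $\langle \nabla f(\xx^{(t)}), \xx^{(t)}\rangle$ is correctly tracked (it cancels in the right-hand side of the branching condition but not in the left-hand side).
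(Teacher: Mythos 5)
Your proposal is correct and follows essentially the same route as the paper's proof: Property~1 by identifying the multipliers with the vertex-selection criteria of Algorithm~\ref{alg:bpfw} (the constant shift $\langle\nabla f(\xx^{(t)}),\xx^{(t)}\rangle$ cancelling in the pairwise comparison), and Property~2 by noting that pairwise steps only move weight among active coordinates while the nonnegativity of the FW gap ($-\min_{i\in[n]}\lambda_i(\xx^{(t)})\geq 0$) rules out a coordinate with strictly positive multiplier being chosen as the FW vertex. Your write-up is simply a more explicit version of the paper's argument, with no substantive difference.
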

\begin{proof}
The first statement follows from the algorithm and the fact that the $i$th multiplier corresponds to the selection criterion for the away and \fw{} vertices.
For the second statement, note that a vertex not in the current active set cannot be selected as a \fw{} or away vertex in a pairwise step.
Furthermore, the \fw{} gap can be expressed as
\begin{align*}
    g(\xx) = -\min_{i\in [n]} \lambda_i(\xx) \geq 0.
\end{align*}
Thus, there always exists a coordinate $i$ with $\lambda_i(\xx) \leq 0$. If for $j \in [n]$, $\lambda_j(\xx^{(t)}) > 0$, then the vertex $\ee_j$ would not be selected as the vertex in a \fw{} step either.
\end{proof}
Next, we derive upper and lower bounds on the multipliers.
\begin{lemma}[Bounds on the multipliers]
Let $\cC = \Delta_n\subseteq \R^n$ be the probability simplex, let $f\colon \cC \to \R$ be a convex and $L$-smooth function, and let $\xx^*\in\argmin_{\xx\in\cC}f(\xx)$. Then, for the iterates of Algorithm~\ref{alg:bpfw} (\bpfw{}) with line-search and all $t\in \{0, 1, \ldots, T-1\}$ such that
$\norm{\xx^* - \xx^{(t)}}_1 < h^*$, it holds that:
\begin{align}
\lambda_i(\xx^{(t)}) &< h^* \left(L + \frac{\delta^{(t)}}{2}\right) && \mathrm{for\ all} \ i \in I(\xx^*)\label{ineq:multiplierInI} \\
\lambda_j(\xx^{(t)}) &> -h^* \left(L + \frac{\delta^{(t)}}{2}\right) + \delta_{\min} && \mathrm{for\ all} \ j \in I^c(\xx^*).\label{ineq:multiplierInJt}
\end{align}
\end{lemma}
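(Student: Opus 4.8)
The plan is to control $\lambda_i(\xx^{(t)})$ by comparing it to the optimal multiplier $\lambda_i(\xx^*)$ through the exact identity
\begin{align*}
\lambda_i(\xx^{(t)}) = \lambda_i(\xx^*) + \underbrace{\langle \nabla f(\xx^{(t)}) - \nabla f(\xx^*),\ \ee_i - \xx^{(t)}\rangle}_{(A)} + \underbrace{\langle \nabla f(\xx^*),\ \xx^* - \xx^{(t)}\rangle}_{(B)},
\end{align*}
which I would verify by expanding $\lambda_i(\xx) = \langle\nabla f(\xx), \ee_i\rangle - \langle\nabla f(\xx),\xx\rangle$ and cancelling the $\ee_i$-terms. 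Both inequalities \eqref{ineq:multiplierInI} and \eqref{ineq:multiplierInJt} then follow from the triangle inequality once I establish $\abs{(A)} < Lh^*$ and $\abs{(B)} < \tfrac{\delta^{(t)}}{2}h^*$: for $i \in I(\xx^*)$ we have $\lambda_i(\xx^*) = 0$, yielding the upper bound, while for $j \in I^c(\xx^*)$ we have $\lambda_j(\xx^*) \ge \delta_{\min}$ by definition of $\delta_{\min}$, yielding the lower bound.

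For term $(A)$ I would apply Cauchy--Schwarz and use that $f$ being convex and $L$-smooth makes $\nabla f$ $L$-Lipschitz, so $\abs{(A)} \le L\,\snorm{\xx^{(t)} - \xx^*}_2\,\snorm{\ee_i - \xx^{(t)}}_2$. The key point, and the main obstacle to obtaining the sharp constant $L$ rather than a spurious factor of $\sqrt 2$, is to exploit the simplex geometry: because $\xx^{(t)} - \xx^*$ is a zero-sum vector one has $\snorm{\xx^{(t)} - \xx^*}_2 \le \tfrac{1}{\sqrt2}\snorm{\xx^{(t)} - \xx^*}_1$, whereas $\snorm{\ee_i - \xx^{(t)}}_2 \le \sqrt 2$ is just the $\ell_2$-diameter of $\Delta_n$. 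The factors $\tfrac{1}{\sqrt2}$ and $\sqrt2$ cancel, giving $\abs{(A)} \le L\,\snorm{\xx^{(t)} - \xx^*}_1 < Lh^*$ by the hypothesis $\snorm{\xx^* - \xx^{(t)}}_1 < h^*$.

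For term $(B)$ I would rewrite it using $\langle\nabla f(\xx^*),\ee_j\rangle = \lambda_j(\xx^*) + \langle\nabla f(\xx^*),\xx^*\rangle$ together with $\sum_j (x_j^* - x_j^{(t)}) = 0$ to obtain $(B) = \sum_j (x_j^* - x_j^{(t)})\lambda_j(\xx^*)$. Complementary slackness ($x_j^* = 0$ for $j \in I^c(\xx^*)$) and the definition of $I(\xx^*)$ (where $\lambda_j(\xx^*) = 0$) collapse this to $(B) = -\sum_{j\in J^{(t)}} x_j^{(t)}\lambda_j(\xx^*)$, the sum running only over coordinates with $x_j^{(t)} > 0$. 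Since $\xx^*$ is a minimizer each $\lambda_j(\xx^*) \ge 0$, and $\lambda_j(\xx^*) \le \delta^{(t)}$ for $j \in J^{(t)} \subseteq [n]\setminus O^{(t)}$; finally $\sum_{j\in I^c(\xx^*)} x_j^{(t)} = \sum_{j\in I^c(\xx^*)}(x_j^{(t)} - x_j^*) \le \tfrac12\snorm{\xx^{(t)} - \xx^*}_1$, since the positive part of a zero-sum difference of probability vectors equals half its $\ell_1$-norm. Together these give $\abs{(B)} \le \delta^{(t)}\cdot \tfrac12\snorm{\xx^{(t)}-\xx^*}_1 < \tfrac{\delta^{(t)}}{2}h^*$.

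Combining the two estimates, for $i \in I(\xx^*)$ I get $\lambda_i(\xx^{(t)}) = (A) + (B) < h^*(L + \delta^{(t)}/2)$, and for $j \in I^c(\xx^*)$ I get $\lambda_j(\xx^{(t)}) \ge \delta_{\min} - \abs{(A)} - \abs{(B)} > -h^*(L + \delta^{(t)}/2) + \delta_{\min}$, which are exactly \eqref{ineq:multiplierInI} and \eqref{ineq:multiplierInJt}. The only genuinely delicate step is the constant tracking in $(A)$, resolved by the cancellation between the zero-sum bound on $\xx^{(t)}-\xx^*$ and the diameter bound on $\ee_i - \xx^{(t)}$; everything else is a direct consequence of optimality, complementary slackness, and the $\ell_1$-proximity hypothesis.
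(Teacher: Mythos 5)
Your proposal is correct, and it does more work than the paper's own proof: the paper disposes of this lemma by citing \citet[Lemma~3.1]{bomze2020active} for the uniform perturbation bound $\abs{\lambda_i(\xx^{(t)}) - \lambda_i(\xx^*)} < h^*\bigl(L + \tfrac{\delta^{(t)}}{2}\bigr)$ and then, exactly as you do in your final step, reads off \eqref{ineq:multiplierInI} and \eqref{ineq:multiplierInJt} from the definitions \eqref{def:Iset} and \eqref{def:deltamin} (using $\lambda_i(\xx^*)=0$ on $I(\xx^*)$ and $\lambda_j(\xx^*)\geq\delta_{\min}$ on $I^c(\xx^*)$). What you add is a self-contained derivation of the cited bound, and your derivation is sound: the decomposition $\lambda_i(\xx^{(t)})-\lambda_i(\xx^*) = (A)+(B)$ is an exact identity; the bound $\abs{(A)} \leq L\snorm{\xx^{(t)}-\xx^*}_1$ correctly exploits that $\xx^{(t)}-\xx^*$ is zero-sum (so $\snorm{\cdot}_2 \leq \tfrac{1}{\sqrt 2}\snorm{\cdot}_1$, which is precisely the property of \citet[Lemma~A.1]{bomze2020active} the paper uses elsewhere) against the $\sqrt 2$ diameter of $\Delta_n$; and the reduction of $(B)$ to $-\sum_{j\in J^{(t)}} x_j^{(t)}\lambda_j(\xx^*)$ via complementary slackness, together with $\lambda_j(\xx^*)\leq\delta^{(t)}$ on $J^{(t)}\subseteq [n]\setminus O^{(t)}$ and the half-$\ell_1$ bound on the positive part, is exactly where the $\tfrac{\delta^{(t)}}{2}$ constant must come from. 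The only caveat is cosmetic: you invoke $L$-Lipschitzness of $\nabla f$, whereas the paper defines $L$-smoothness via the descent inequality; this identification is standard in this literature and is implicit in the paper's own reliance on \citet{bomze2020active}, so it is not a gap in substance. In short, the paper's proof buys brevity by outsourcing the key estimate; yours buys transparency, making visible the constant tracking ($L$ from the gradient perturbation, $\tfrac{\delta^{(t)}}{2}$ from the optimal multipliers) that the citation hides.
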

\begin{proof}
\citet[Lemma 3.1]{bomze2020active} proves that
$
    |\lambda_i(\xx^*) - \lambda_i(\xx^{(t)})| < h^* \left(L + \frac{\delta^{(t)}}{2}\right)
$ for any $i\in[n]$ and iteration $t$ where the assumptions of the Lemma hold.
The two subsequent inequalities follow from the definitions of $I(\xx^*)$ in \eqref{def:Iset} and $\delta_{\min}$ in \eqref{def:deltamin}.
In particular, Equation~\eqref{ineq:multiplierInJt} can be obtained as follows:
\begin{align*}
\lambda_j(\xx^{(t)}) &> \lambda_j(\xx^{*}) - h^* \left(L + \frac{\delta^{(t)}}{2}\right) \geq \min_{i\in I^{c}(\xx^*)} \lambda_i(\xx^{*})  - h^* \left(L + \frac{\delta^{(t)}}{2}\right) = \delta_{\min} - h^* \left(L + \frac{\delta^{(t)}}{2}\right).
\end{align*}
\end{proof}
In order to prove the active set identification property, we need a lower bound assumption on the step size. This assumption allows us in particular to prove that, once the iterate is close enough to a stationary point, a local pairwise step is always maximal and reduces the weight of the away vertex down to zero.
\begin{assumption}[Step-size lower bound]\label{assum:steplowerbound}
During the execution of Algorithm~\ref{alg:bpfw} (\bpfw{}) and any iteration $t\in\{0,1,\ldots, T-1\}$ where a pairwise step is taken with $\aa^{(t)}\in\cS^{(t)}$ the away vertex, we assume that the following lower bound holds on the step size:
\begin{align*}
    \eta^{(t)} \geq \min \left\{\alpha_{\aa^{(t)}}^{(t)}, \frac{\langle\nabla f(\xx^{(t)}), - \dd^{(t)}\rangle}{L \norm{\dd^{(t)}}_2^2} \right\}.
\end{align*}
\end{assumption}
Assumption~\ref{assum:steplowerbound} holds in particular for step-sizes computed with line-search and short-step rules \citet[Remark 4.4]{bomze2020active}.
The authors also show that
\begin{align}
    \eta^{(t)} \geq \min \left\{\alpha_{\aa^{(t)}}^{(t)}, c \frac{\langle\nabla f(\xx^{(t)}), - \dd^{(t)}\rangle}{L \norm{\dd^{(t)}}_2^2} \right\}\label{eqref:relaxcond}
\end{align}
for a given $c > 0$ is a sufficient condition for Assumption~\ref{assum:steplowerbound} to hold (with a modified Lipschitz constant $L / c$).
This means in particular that the commonly-used adaptive step-size strategy introduced in \citet{pedregosa2018step} respects the weaker condition \eqref{eqref:relaxcond}, with the factor $c$ corresponding to the parameter $\tau$ of the backtracking subroutine \citet[Algorithm 2]{pedregosa2018step}.

We can now state Theorem~\ref{theorem:identification}, which proves the active set identification for \bpfw{}. The proof follows the structure of that of \citet[Theorem 3.3]{bomze2020active}, where the same property is shown for \afw{}.
\begin{theorem}[\bpfw{} identification]\label{theorem:identification}
Let $\cC = \Delta_n\subseteq \R^n$ be the probability simplex, let $f\colon \cC \to \R$ be a convex and $L$-smooth function, and let $\xx^*\in\argmin_{\xx\in\cC}f(\xx)$. Then, for the iterates of Algorithm~\ref{alg:bpfw} (\bpfw{}) with line-search and all $t\in \{0, 1, \ldots, T-1\}$ such that
$\norm{\xx^{(t)} - \xx^*}_1 < h^*$, it holds that
$
    |J^{(t+1)}| \leq \max \{0, |J^{(t)}| -1\}.
$
\end{theorem}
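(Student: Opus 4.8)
The goal is to show that once the iterate is sufficiently close to the optimum (i.e.\ $\norm{\xx^{(t)} - \xx^*}_1 < h^*$), the number of ``bad active'' coordinates $|J^{(t)}|$, i.e.\ coordinates $i \in I^c(\xx^*)$ with $\xx_i^{(t)} > 0$, cannot increase and in fact strictly decreases whenever it is positive. Recall $J^{(t)} = I^c(\xx^*) \setminus O^{(t)}$, so $i \in J^{(t)}$ exactly when $i \in I^c(\xx^*)$ and $\xx_i^{(t)} > 0$. The plan is to analyze the two possible step types of \bpfw{} separately and argue that neither can introduce a \emph{new} coordinate from $I^c(\xx^*)$ into the support, and that a pairwise step removes at least one.

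\textbf{Step 1: No new bad coordinate enters the support.}
I would first argue that for any $j \in I^c(\xx^*)$ currently \emph{not} in the support (so $\xx_j^{(t)} = 0$, i.e.\ $j \in O^{(t)}$), we must have $\xx_j^{(t+1)} = 0$ as well. The key tool is inequality~\eqref{ineq:multiplierInJt}, which under the closeness assumption gives $\lambda_j(\xx^{(t)}) > -h^*(L + \delta^{(t)}/2) + \delta_{\min}$; combined with the definition of $h^*$ one checks this lower bound is strictly positive, so $\lambda_j(\xx^{(t)}) > 0$ for all $j \in I^c(\xx^*)$. Now a coordinate can only be newly activated either as the \fw{} vertex $\vv^{(t)}$ in a \fw{} step, or as the vertex $\ww^{(t)}$ in a pairwise step. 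Since $\ww^{(t)}$ is chosen from the current active set, a pairwise step cannot activate a coordinate outside the support; and by Lemma~\ref{lemma:nonintroductionmultipliers}\eqref{property:nonintroductionmultipliers_2}, a coordinate $j$ with $\lambda_j(\xx^{(t)}) > 0$ is never selected as the \fw{} vertex, so it stays at zero. This shows $O^{(t)} \subseteq O^{(t+1)}$ restricted to $I^c(\xx^*)$, hence $J^{(t+1)} \subseteq J^{(t)}$ and immediately $|J^{(t+1)}| \leq |J^{(t)}|$.

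\textbf{Step 2: When $|J^{(t)}| > 0$, a pairwise step strictly decreases it.}
To upgrade the bound to $|J^{(t+1)}| \leq |J^{(t)}| - 1$ when $J^{(t)}$ is nonempty, I would argue that the step taken must be a pairwise step that drives one bad coordinate's weight to zero. Using inequality~\eqref{ineq:multiplierInI}, every $i \in I(\xx^*)$ has small multiplier $\lambda_i(\xx^{(t)}) < h^*(L + \delta^{(t)}/2)$, while every bad active $j \in J^{(t)}$ has $\lambda_j(\xx^{(t)})$ bounded below by roughly $\delta_{\min}$; this separation forces the away vertex $\aa^{(t)}$ (the maximizer of the multiplier over the active set) to be one of the bad coordinates, and forces the step-selection criterion in Lemma~\ref{lemma:nonintroductionmultipliers} to trigger a pairwise step. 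Then Assumption~\ref{assum:steplowerbound} is invoked to show the step size $\eta^{(t)}$ is large enough to hit the boundary $\eta^{(t)} = \alpha_{\aa^{(t)}}^{(t)}$, so the away vertex is fully removed, sending $\xx_{\aa^{(t)}}^{(t+1)} = 0$ and removing that coordinate from $J$.

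\textbf{Main obstacle.}
The delicate part is Step~2: showing that the line-search (or short-step) step size actually reaches the maximal value $\alpha_{\aa^{(t)}}^{(t)}$ rather than stopping short, so that the away vertex is genuinely dropped. This requires combining the lower bound of Assumption~\ref{assum:steplowerbound} with the multiplier bounds to verify $\frac{\langle \nabla f(\xx^{(t)}), -\dd^{(t)}\rangle}{L\norm{\dd^{(t)}}_2^2} \geq \alpha_{\aa^{(t)}}^{(t)}$, i.e.\ that the unconstrained optimal step exceeds the feasibility cap. I expect this to require a careful quantitative estimate relating the directional derivative along $\dd^{(t)} = \ww^{(t)} - \aa^{(t)}$ (bounded below via the multiplier gap $\lambda_{\aa^{(t)}} - \lambda_{\ww^{(t)}} \gtrsim \delta_{\min}$) to both the smoothness constant and the current weight $\alpha_{\aa^{(t)}}^{(t)}$, mirroring the corresponding argument in \citet[Theorem 3.3]{bomze2020active} for \afw{}. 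This is where the precise choice of the threshold $h^*$ becomes essential.
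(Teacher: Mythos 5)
Your proposal is correct and follows essentially the same route as the paper's proof: Lemma~\ref{lemma:nonintroductionmultipliers} prevents the introduction of coordinates from $I^c(\xx^*)$ (handling both the $J^{(t)} = \emptyset$ case and the non-increase in general), the multiplier bounds \eqref{ineq:multiplierInI}--\eqref{ineq:multiplierInJt} place the away vertex in $J^{(t)}$ and the local Frank--Wolfe vertex in $I(\xx^*)$, and Assumption~\ref{assum:steplowerbound} combined with the bound $x_j^{(t)} < h^*/2$ (from closeness to $\xx^*$ and complementary slackness) shows the pairwise step is maximal, dropping the away vertex. The only difference is presentational: you propose to verify directly that the unconstrained step length exceeds the cap $\alpha_{\ee_j}^{(t)}$, whereas the paper assumes the step is not maximal and derives the contradiction $x_j^{(t+1)} < 0$ --- the same quantitative estimate either way.
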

\begin{proof}
If $I^c(\xx^*) = \emptyset$, $J^{(t)} \subseteq I^c(\xx^*) = \emptyset$ and $J^{(t+1)} \subseteq I^c(\xx^*) = \emptyset$. We next assume $|I^c(\xx^*)| > 0$.

We first consider the case in which $J^{(t)} = \emptyset$.
Then, $[n] \, \backslash \, O^{(t)} = I(\xx^*)$ and $\delta^{(t)} = 0$ since $\lambda_i(\xx^*) = 0 $ for all $ i \in I(\xx^*)$.
Inequality \eqref{ineq:multiplierInJt} becomes: 
\begin{align*}
    \lambda_j(\xx^{(t)}) > -h^* L + \delta_{\min} = \delta_{\min} - \frac{L \delta_{\min}}{3L + \delta_{\min}} > 0 \qquad \text{for all} \ j \in I^c(\xx^*) = O^{(t)}.
\end{align*}
Since $J^{(t)} = \emptyset$, it holds that $I^c(\xx^*) = O^{(t)}$ and $x^{(t)}_j = 0 $ for all $j \in I^c(\xx^*)$.
Following Lemma~\ref{lemma:nonintroductionmultipliers}, Property~\ref{property:nonintroductionmultipliers_2}, we conclude that $x_j^{(t+1)} = 0 $ for all $j \in I^c(\xx^*)$. Thus, $J^{(t+1)} = \emptyset$.

We now consider the case when $|J^{(t)}| > 0$ and first prove that the \fw vertex $i$ and the away vertex $j$ are such that $i \in I(\xx^*)$ and $j\in J^{(t)}$.
Recall that $\cS^{(t)}_I \subseteq [n] \,\backslash\, O^{(t)} = J^{(t)} \cup I(\xx^*)$ 
and that both the \fw and away vertex are selected from $\cS^{(t)}_I$ in a pairwise step.
\citet[Theorem 3.3, Case 2]{bomze2020active}
proves that if $|J^{(t)}| > 0$, then the away vertex is always selected in $J^{(t)}$ if
\begin{align*}
    \|\xx^{(t)} - \xx^* \|_1 < \frac{\delta_{\min}}{2L + \delta_{\min}},
\end{align*}
which holds by the definition of $h^*$ since 
\begin{align*}
    \|\xx^{(t)} - \xx^* \|_1 < h^* \leq \frac{\delta_{\min}}{3L + \delta_{\min}} \leq \frac{\delta_{\min}}{2L + \delta_{\min}}.
\end{align*}
We next demonstrate that the \fw vertex is always selected in $I(\xx^*)$ and not in $J^{(t)}$.
Based on \eqref{ineq:multiplierInI} and \eqref{ineq:multiplierInJt}, a sufficient condition for the \fw vertex to be selected in $I(\xx^*)$ is:
\begin{align*}
& \lambda_i(\xx^{(t)}) < \lambda_j(\xx^{(t)}) \ \text{for all} \ i \in I(\xx^*), j \in J^{(t)} \qquad
\Leftrightarrow \qquad h^* < \frac{\delta_{\min}}{2L + \delta^{(t)}},
\end{align*}
which follows from the definition of $h^*$.
Finally, we need to prove that the pairwise step is maximal such that the away vertex is dropped. That is, given the \fw and away vertices $i \in I(\xx^*)$ and $j\in J^{(t)}$, respectively, the step-size is $\eta^{(t)} = \alpha_{\ee_j}^{(t)}$, the weight of the away vertex $\ee_j$ in the active set. 
Let us assume by contradiction that the step is not maximal, that is, $\eta^{(t)} < \alpha_{\ee_j}^{(t)}$.
Furthermore, recall that if a pairwise step is performed at iteration $t$, then $\dd^{(t)} = \ee_i - \ee_j$ for two indices $i\neq j$, yielding $\|\dd^{(t)}\|^2_2 = 2$.
Then, by Assumption~\ref{assum:steplowerbound},
\begin{align*}
    \eta^{(t)} & \geq \frac{\langle\nabla f(\xx^{(t)}), - \dd^{(t)}\rangle}{L \norm{\dd^{(t)}}_2^2} \\
               & = \frac{\lambda_j(\xx^{(t)}) - \lambda_i(\xx^{(t)})}{2L} & \\
               & \geq \frac{\delta_{\min} - h^*(2L+\delta^{(t)})}{2L} & \text{$\triangleright$ by subtracting \eqref{ineq:multiplierInI} from \eqref{ineq:multiplierInJt}}\\
               & = \frac{\delta_{\min}}{2L} - h^* - \frac{h^* \delta^{(t)}}{2L}.
\end{align*}
Furthermore, since $\xx^{(t)}$ and $\xx^*$ lie in the simplex, by \citet[Lemma A.1, Property 2]{bomze2020active}, and the fact that $x^*_j = 0$  for all $j\in I^c(\xx^*)$ by complementary slackness, we have:
\begin{align*}
x_j^{(t)} = |x_j^{(t)} - x_j^*| \leq \frac{\|\xx^{(t)} - \xx^*\|_1}{2} < \frac{h^*}2.
\end{align*}
Together with the fact that $d^{(t)}_j = -1$ since $j$ is the coordinate corresponding to the away vertex, we have:
\begin{align*}
x_j^{(t+1)} &= x_j^{(t)} + \eta^{(t)} d^{(t)}_j\\
            & < \frac{h^*}{2} - \frac{\delta_{\min}}{2L} + h^* + \frac{h^*\delta^{(t)}}{2L} \\
            & = \frac{3L h^*}{2L} - \frac{\delta_{\min}}{2L} + \frac{h^*\delta^{(t)}}{2L} \\
            & = \frac1{2L} ((3L + \delta^{(t)})h^* - \delta_{\min}) < 0,
\end{align*}
a contradiction. We conclude that the step size is maximal.
\end{proof}
Theorem~\ref{theorem:identification} can be used to generalize \citet[Theorem 4.3]{bomze2020active} to \bpfw{} under similar assumptions, meaning that \bpfw{} identifies the active set in a finite number of iterations,
since the proof of \citet[Theorem 4.3]{bomze2020active} does not rely on the sequence generated by the algorithm.
We also note that our proof does not extend to the pairwise Frank-Wolfe algorithm \citep{lacoste2015global}, since we rely on the \bpfw{} property that at any iteration in which a pairwise step is performed, both the away and the FW vertex are selected from the active set.

\begin{theorem}[Active set identification property of \bpfw]\label{thm:bpfw_active}
Let $\cC\subseteq\R^n$ be a polytope, let $f\colon\cC\to\R$ be a convex and $L$-smooth function, and suppose that $\xx^*\in\argmin_{\xx\in\cC}f(\xx)$ is unique and $\xx^*\in\relint(\cC^*)$, where $\cC^*\in\faces(\cC)$. Then, for $T$ large enough, for the iterations of Algorithm~\ref{alg:bpfw} (\bpfw) with line-search, there exists an iteration $\Rfwt \in \{0, 1, \ldots, T\}$ such that $\xx^{(t)}\in\cC^*$ for all $t\in\{\Rfwt, \Rfwt+1,\ldots, T\}$. This property is convex-combination-agnostic.
\end{theorem}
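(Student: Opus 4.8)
The plan is to obtain Theorem~\ref{thm:bpfw_active} as a direct consequence of the per-iteration contraction of $|J^{(t)}|$ established in Theorem~\ref{theorem:identification}, mirroring the way \citet[Theorem~4.3]{bomze2020active} is deduced from its one-step counterpart; indeed, the wrapper argument there depends only on such a recursion together with convergence, not on the specific sequence produced by the algorithm. By affine invariance \citep[Appendix~C]{bomze2020active} it suffices to treat $\cC = \Delta_n$, so that each vertex is a unit vector $\ee_i$ and, using $\xx^*\in\relint(\cC^*)$ together with complementary slackness, the optimal face is identified with the support of $\xx^*$, namely $\cC^* = \{\xx \in \Delta_n \mid x_j = 0 \text{ for all } j \in I^c(\xx^*)\}$. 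With the sets from \eqref{def:Iset}, the condition $\xx^{(t)}\in\cC^*$ is then equivalent to $x_j^{(t)}=0$ for all $j\in I^c(\xx^*)$, which is exactly $J^{(t)}=\emptyset$; this reformulation is the bridge between the combinatorial quantity controlled in Theorem~\ref{theorem:identification} and the geometric conclusion we want.

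First I would upgrade the function-value convergence of \bpfw{} in Theorem~\ref{thm:afw_bpfw_sublinear} to convergence of the iterates themselves: since $f(\xx^{(t)})\to\min_{\xx\in\cC}f(\xx)$, the minimizer $\xx^*$ is unique by assumption, and $\cC$ is compact, a standard subsequence argument forces $\xx^{(t)}\to\xx^*$. Hence there is an index $T_0$ with $\norm{\xx^{(t)}-\xx^*}_1 < h^*$ for all $t\geq T_0$, which is precisely the hypothesis under which Theorem~\ref{theorem:identification} applies (its step-size requirement being met by line-search). Iterating the bound $|J^{(t+1)}|\leq\max\{0,|J^{(t)}|-1\}$ from $t=T_0$ onward, the integer sequence $|J^{(t)}|$ strictly decreases while positive and is absorbed at $0$; since $|J^{(T_0)}|\leq|I^c(\xx^*)|\leq n$, it reaches $0$ after at most $|J^{(T_0)}|$ steps and remains there. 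Setting $\Rfwt := T_0 + |J^{(T_0)}|$, which lies in $\{0,\ldots,T\}$ once $T$ is taken large enough, then yields $J^{(t)}=\emptyset$, i.e. $\xx^{(t)}\in\cC^*$, for all $t\in\{\Rfwt,\ldots,T\}$.

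It remains to justify the convex-combination-agnostic claim. On $\Delta_n$ the decomposition $\xx=\sum_i x_i\ee_i$ is unique, so $\cS^{(t)}=\{\ee_i\mid x_i^{(t)}>0\}$ and $\cS_I^{(t)}=\supp(\xx^{(t)})$ are determined by the iterate alone, and every quantity entering the argument---the multipliers $\lambda_i(\xx^{(t)})$, the \bpfw{} step selection of Lemma~\ref{lemma:nonintroductionmultipliers}, and the function values---depends only on $\xx^{(t)}$ and $\nabla f(\xx^{(t)})$. Thus replacing one convex representation by another, as in Definition~\ref{def:cca}, changes nothing on the simplex, and since the only ingredients are the agnostic rate of Theorem~\ref{thm:afw_bpfw_sublinear} and the iterate-determined recursion of Theorem~\ref{theorem:identification}, the identification property is convex-combination-agnostic. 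The hard part will be exactly this transfer to general polytopes: unlike on the simplex, a polytope admits genuinely distinct active-set representations of the same iterate, and \bpfw{}'s away and local-\fw{} vertex selection reads off $\cS^{(t)}$, so I would need to argue carefully that affine invariance transports the representation-free simplex statement into a statement on $\cC$ whose truth---face membership of the pulled-back trajectory---does not depend on which decomposition \pm{} happens to maintain.
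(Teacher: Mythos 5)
Your overall route is the paper's: reduce to the simplex via affine invariance, then run the wrapper of \citet[Theorem~4.3]{bomze2020active} on top of the contraction $|J^{(t+1)}|\leq\max\{0,|J^{(t)}|-1\}$ from Theorem~\ref{theorem:identification}; your convergence-plus-absorption argument (uniqueness, compactness, then $|J^{(t)}|$ hitting zero and staying there) is exactly the content the paper delegates to that citation. There are, however, two genuine gaps. First, the bridge ``$J^{(t)}=\emptyset$, i.e., $\xx^{(t)}\in\cC^*$'' is wrong as derived. Complementary slackness only gives $\supp(\xx^*)\subseteq I(\xx^*)$, hence $\cC^*\subseteq\{\xx\in\Delta_n\mid x_j=0\ \text{for all}\ j\in I^c(\xx^*)\}$; the reverse inclusion is strict complementarity, which follows neither from uniqueness of $\xx^*$ nor from $\xx^*\in\relint(\cC^*)$ (every point of a polytope lies in the relative interior of exactly one face, so that hypothesis adds nothing). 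Take $f(\xx)=\|\xx-(1/2,1/2,0)\|_2^2$ on $\Delta_3$: the minimizer is unique and lies in the relative interior of an edge, but $\nabla f(\xx^*)=\zeros$, so $I^c(\xx^*)=\emptyset$ and $J^{(t)}\equiv\emptyset$ from iteration $0$, while early iterates are certainly not on that edge. Your argument therefore only establishes finite-time membership in the possibly strictly larger face $\{\xx\mid x_j=0\ \text{for all}\ j\in I^c(\xx^*)\}$, which equals $\cC^*$ only under strict complementarity; covering the degenerate cases needs additional arguments (drop-step behavior near $\xx^*$), which the paper hides inside its wholesale deferral to \citet{bomze2020active} rather than re-deriving.

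Second, the convex-combination-agnosticity is part of the claimed statement, and you prove it only where it is vacuous. On $\Delta_n$ barycentric representations are unique, so nothing can change there; but the theorem concerns \bpfw{} on a polytope, where a re-representation in the sense of Definition~\ref{def:cca} pulls back, under the affine-invariance correspondence, to a jump of the weight vector inside the fiber $\{\yy\in\Delta_n\mid A\yy=\xx^{(t)}\}$. The pulled-back sequence is then no longer a \bpfw{} trajectory, so Theorem~\ref{theorem:identification} does not literally apply to it; you flag this yourself as ``the hard part'' and leave it open, which makes the proof incomplete precisely on the clause that is not a restatement of the literature. The missing observations are short: (i) the multipliers $\lambda_i$ depend on the representative $\yy^{(t)}$ only through $A\yy^{(t)}=\xx^{(t)}$, so they are unaffected by re-representation; and (ii) Definition~\ref{def:cca} only permits replacement representations supported inside $\cT^{(t+1)}$, so the pulled-back support --- and with it $J^{(t)}$ and the pool from which \bpfw{} draws its away and local FW vertices --- can only shrink between \bpfw{} steps. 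One then checks that the proof of Theorem~\ref{theorem:identification} uses these data monotonically, so the contraction survives the interleaved jumps. Spelling this out would close your second gap.
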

\begin{proof}
By affine invariance of \bpfw with line-search, this is equivalent to optimizing $f(A\yy)$, with $\yy\in\Delta_n$, as developed in \citet[Appendix~C]{bomze2020active}.
The proof over the simplex follows that of \citet[Theorem~4.3]{bomze2020active}, relying on Theorem~\ref{theorem:identification} to ensure the cardinality of the set $J^{(t)}$ decreases at each iteration once
there exists a stationary point $\xx^*$ of the problem with $\xx^* \in \cX$, such that $\|\xx^{(t)} - \xx^*\|_1 < h^*$.
\end{proof}

\section{Algorithmic aspects}\label{sec:algorithmic}

The computationally-demanding part of \pm{} -- compared to the baseline \fw{} variants it builds on -- amounts to solving the linear system $(N^{(t)})^{-1} \vv$ in \fwu{} when a new vertex is introduced.
All modifications of $M^{(t)}$ throughout \pm's execution consist of column additions and substitutions, one could therefore construct an initial sparse LU decomposition of $M^{(t)}$ and perform rank-one updates
\citep{gill1987maintaining,huangfu2015novel}.
Preliminary experiments using the \texttt{BasicLU} sparse LU library \citep{schork2017permuting} showed that the numerical accuracy was too low for our purpose,
resulting in weights $\lambda$ that presented a large iterate reconstruction error.
We resorted to direct sparse LU solves instead whenever a new vertex is inserted in the matrix without maintaining an LU update.
Furthermore, we compensate for inaccuracy of the linear solver by projecting back the weights $\lambda$ onto $\Delta_{n+2}$ using the algorithm proposed in \citet{condat2016fast}.
As observed in the experiments, the per-iteration overhead caused by the pivoting operations is not deterrent enough to make the meta algorithm, even with this direct implementation, impractical.
In future work, an extended precision version, along with a control of error tolerances, will allow the integration of rank-one update steps avoiding full factorizations.
Leveraging higher-precision arithmetic will also allow us to exploit the pivoting framework for feasible regions yielding denser vertices and numerically challenging sparse linear systems to solve, such as the instance solved the $K$-sparse polytope.

A typical case where the conditioning of the matrix could degrade quickly occurred with $K$-sparse polytope instances with large enough $K$ and $\tau$ values. One vertex $\vv$ and its opposite $\vv_n = -\vv$ could be added to the active set, resulting in two columns such that:
\begin{align*}
	\frac{\langle\Tilde{\vv},\Tilde{\vv}_n\rangle}{\norm{\vv}^2} = \frac{1 -K\tau^2}{1 + K\tau^2} = -1 + \frac{2}{K\tau^2 + 1} \approx -1.
\end{align*}

As more almost-colinear pairs of columns are added, the matrix $M$ is closer to becoming singular, which may lead to imprecise or impossible linear system solves. The degradation in numerical accuracy is especially pronounced on problems yielding denser vertices.

In contrast, the operations performed on the active set weights in \fw{} variants are more robust to numerical accuracy, consisting only of increasing and decreasing individual weights or scaling the whole weight vector. The availability of an extended-precision sparse linear solver supporting rank-one updates will directly benefit implementations of our algorithms.

We also highlight that a key characteristic of \pm{} is that it maintains only the vertices required to represent the current iterate. One could solve the LPs over the final vertices to obtain a basic solution ensuring the Carathéodory upper bound on the cardinality, but it would require a constraint matrix including all vertices of the final active set, which can be arbitrarily larger than $n+2$ during the solving process.

Finally, we emphasize that \pm{} works with all algorithms leveraging convex combinations of vertices as long as their convergence respects the convex-combination-agnostic property introduced in Definition~\ref{def:cca}.
A promising avenue for future research will be leveraging the pivoting framework introduced in this paper in the context of stochastic gradient estimators, see \citet[Chapter 4]{braun2022conditional}.
The convergence of the stochastic Frank-Wolfe with and without momentum presented in the survey is convex combination agnostic, see \citet[Theorems 4.7, 4.8]{braun2022conditional}.
While large-scale settings could be detrimental to storing the vertices as an active set, stochastic variants of \fw{} algorithms can be leveraged for machine learning problems where the objective is a finite sum of loss terms evaluated over a large number of samples. In such a setting, the pivoting framework can maintain a sparse vertex decomposition of the iterates while maintaining the convergence guarantees of the underlying algorithm.
Away and Pairwise \fw{} variants have been proposed in the stochastic setting in \citet{goldfarb2017linear}. We note that the convergence rate is not convex-combination agnostic, since the number of vertices in the active set appears in a coefficient of the expected primal bound decrease rate, leading the authors to motivate a Carathéodory procedure after the active set update to bound the number of vertices, and thus improving their convergence rate, making a strong argument for the pivoting framework we propose.

\section{Numerical experiments (extended)}\label{sec:numerical_experiments}
We assess our algorithm on efficiency in terms of function value
and FW gap convergence, and sparsity of the obtained solutions, compared to the standard and lazified versions
(that is, using the active set vertices as a weak separation oracle \citep{braun2017lazifying})
of \afw{} and \bpfw{}, which also produce notably sparse solutions.
The implementation of further \pm variants is left for future research.
Our algorithm is implemented in Julia \citep{bezanson2017julia} v1.9.2 and builds on the \texttt{FrankWolfe.jl} package \citep{besanccon2022frankwolfe}.
The code for the algorithm and experiments is available on the repository \href{https://github.com/ZIB-IOL/pivoting_frankwolfe}{github.com/ZIB-IOL/pivoting\_frankwolfe}.

The sparse linear systems are solved with the LU decomposition of the \texttt{UMFPACK} library \citep{umfpack}.
All computations are executed on a cluster with PowerEdge R650 nodes equipped with Intel Xeon Gold 6342 2.80GHx CPUs and 512GB RAM in exclusive mode.
Plots are log-linear, function values are shifted so that the minimum on each plot reaches $10^{-8}$. The prefix \texttt{L-} is used to denote the lazified version of an algorithm, the prefix \texttt{P-} for the \pm variant.

\subsection{Sparse logistic regression}

\begin{figure}[t]
\centering
\begin{subfigure}[t]{0.9\textwidth}
    \centering
    \includegraphics[width=0.65\textwidth]{img/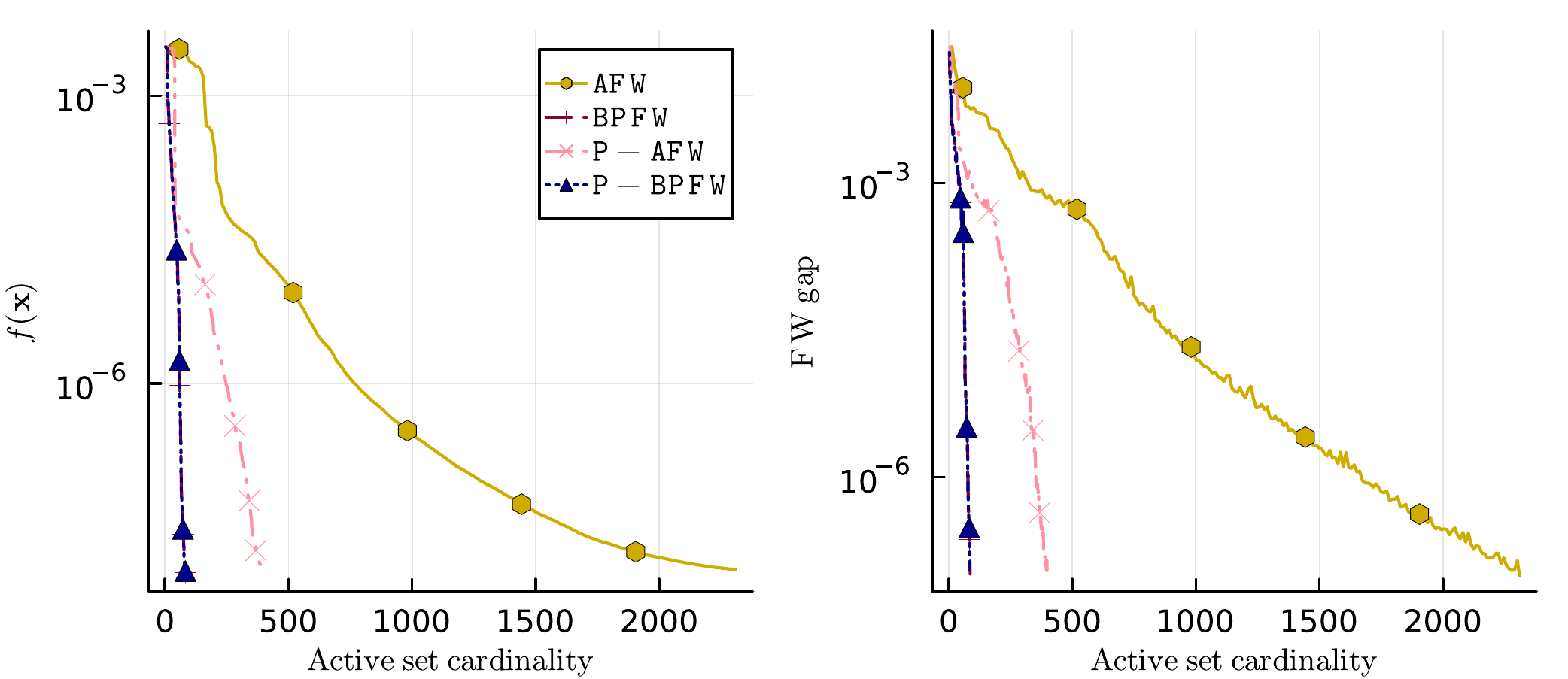}
\end{subfigure}
\hspace{0.05\textwidth}
\begin{subfigure}[t]{0.9\textwidth}
    \centering
    \includegraphics[width=0.65\textwidth]{img/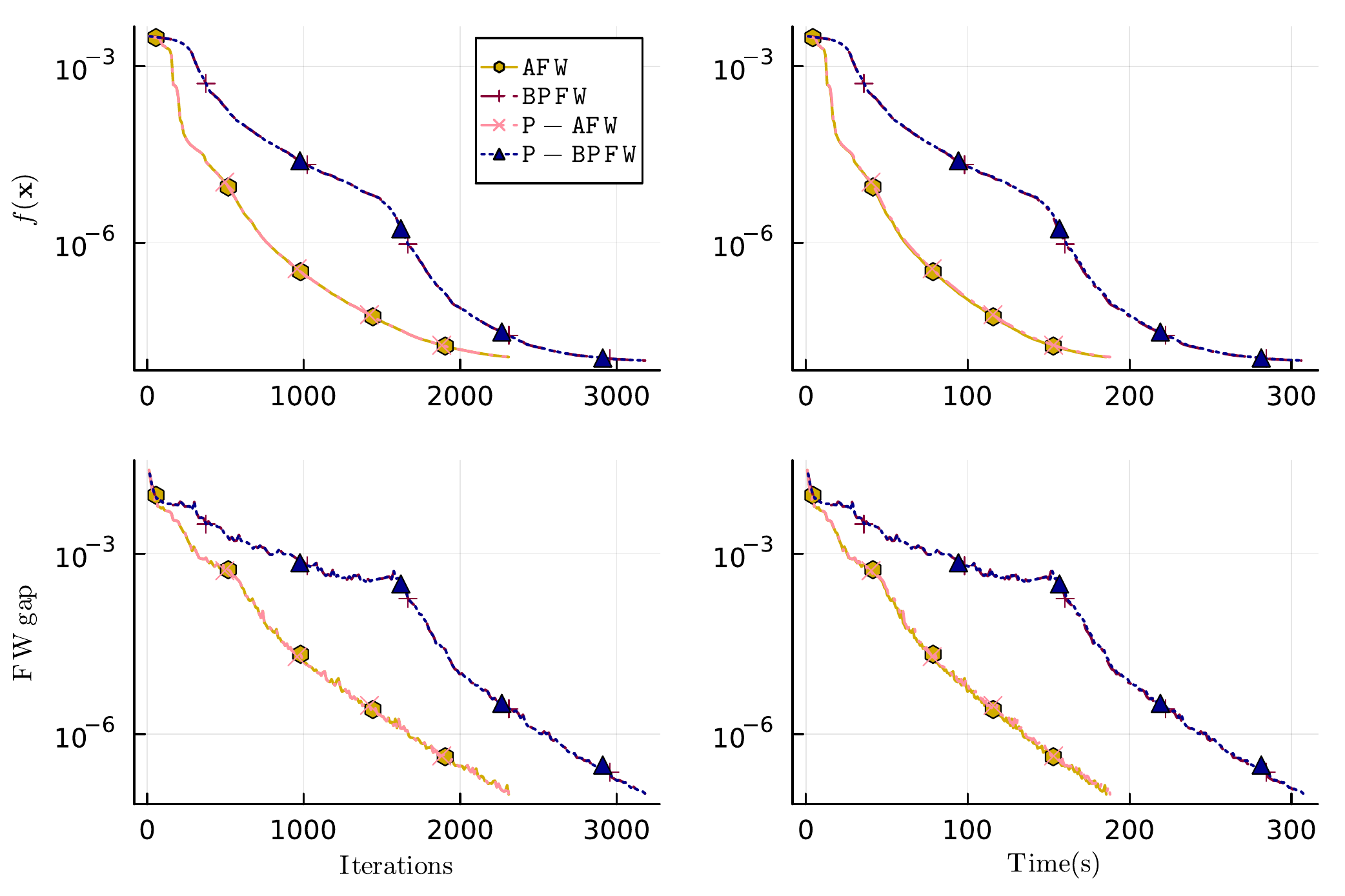}
\end{subfigure}
\caption{Logistic regression, $\tau=60$.}
\label{fig:logreg-60}
\end{figure}

\begin{figure}[t]
\centering
\begin{subfigure}[t]{0.9\textwidth}
    \centering
    \includegraphics[width=0.65\textwidth]{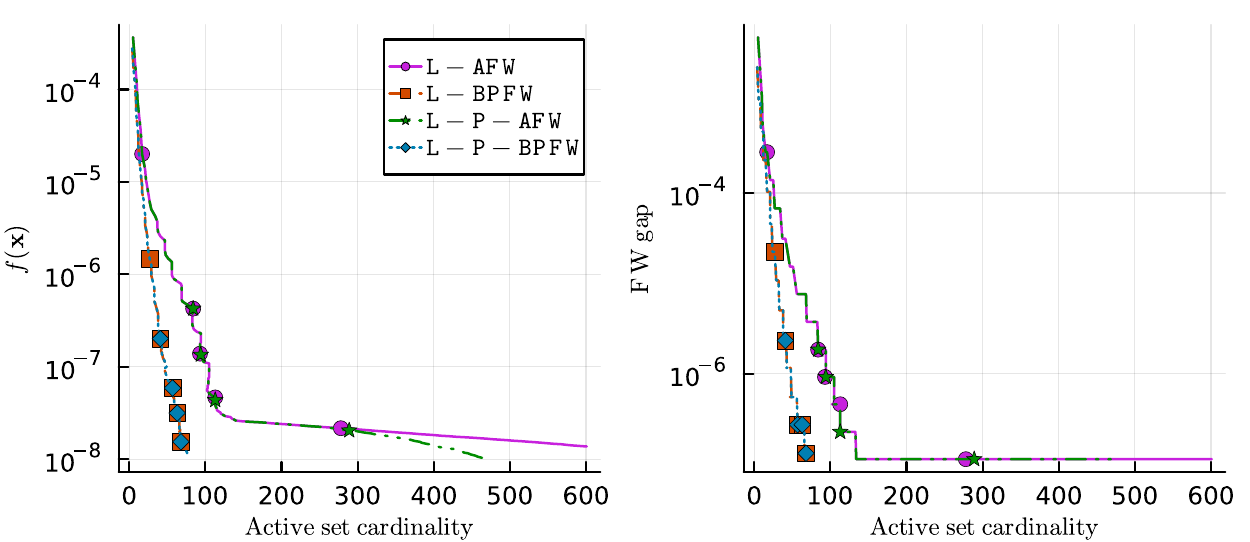}
\end{subfigure}
\hspace{0.05\textwidth}
\begin{subfigure}[t]{0.9\textwidth}
    \centering
    \includegraphics[width=0.65\textwidth]{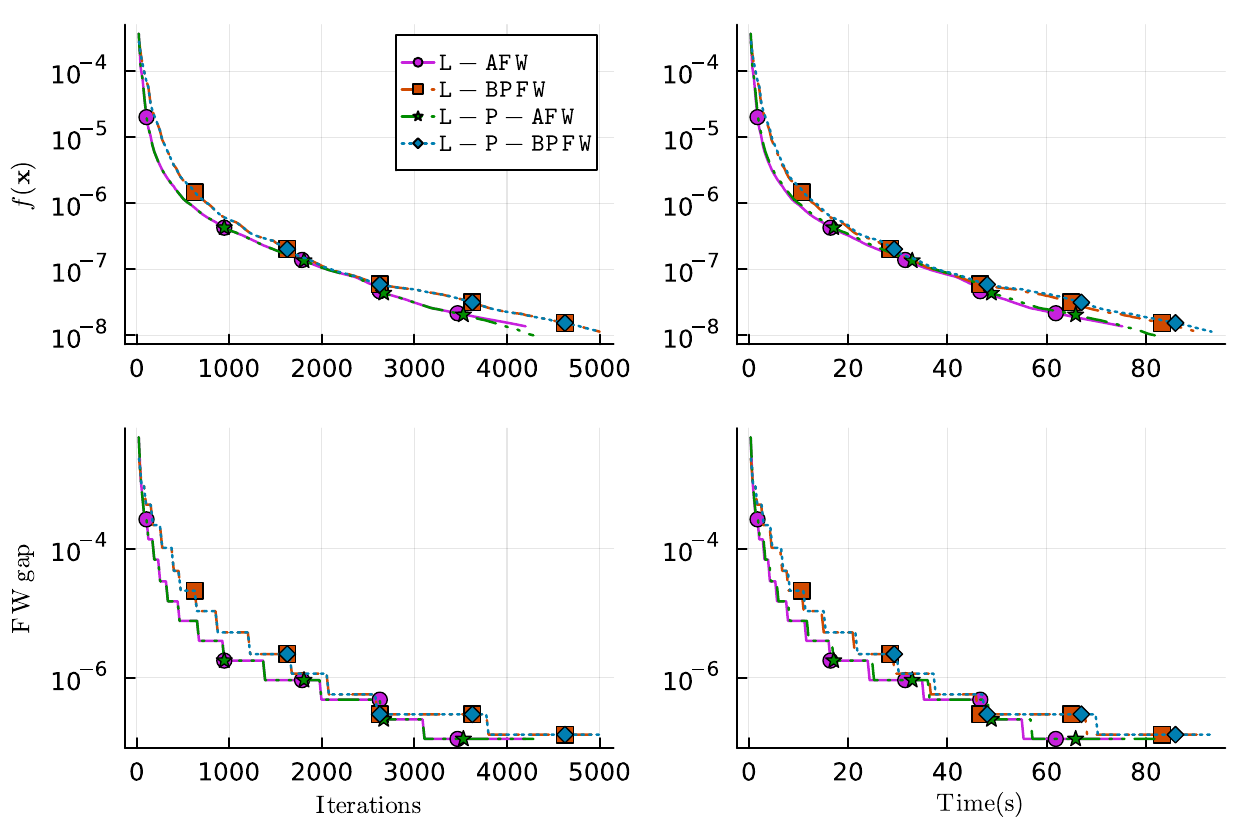}
\end{subfigure}
\caption{$K$-sparse logistic regression, $K=10$ and $\tau=1.0$.}
\label{fig:logreg-ksparse10}
\end{figure}

\begin{figure}[t]
\centering
\begin{subfigure}[t]{0.9\textwidth}
    \centering
    \includegraphics[width=0.65\textwidth]{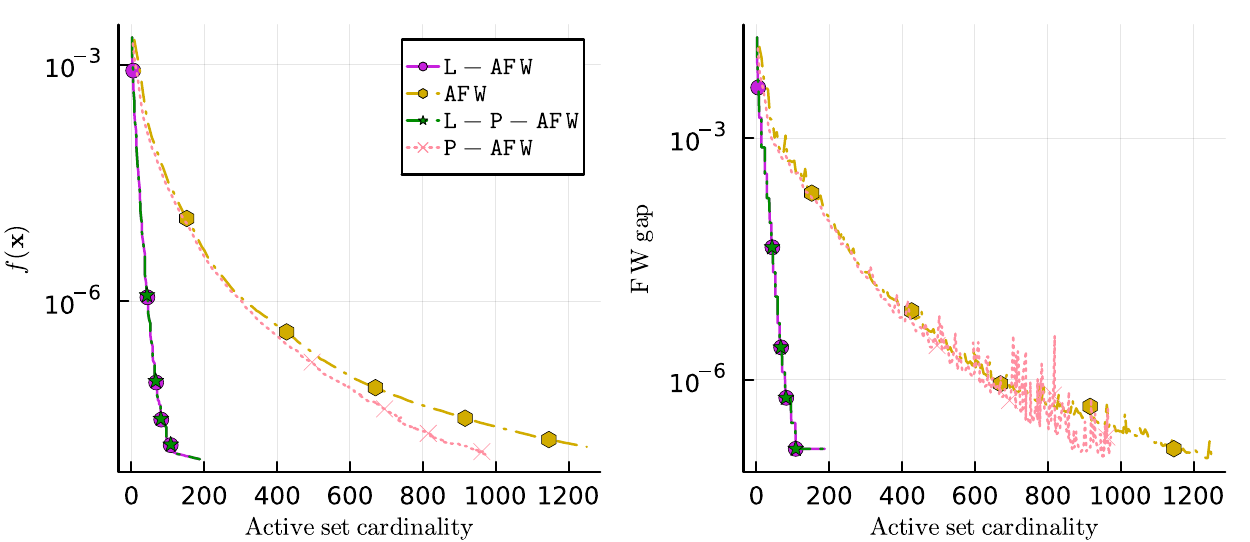}
\end{subfigure}
\hspace{0.05\textwidth}
\begin{subfigure}[t]{0.9\textwidth}
    \centering
    \includegraphics[width=0.65\textwidth]{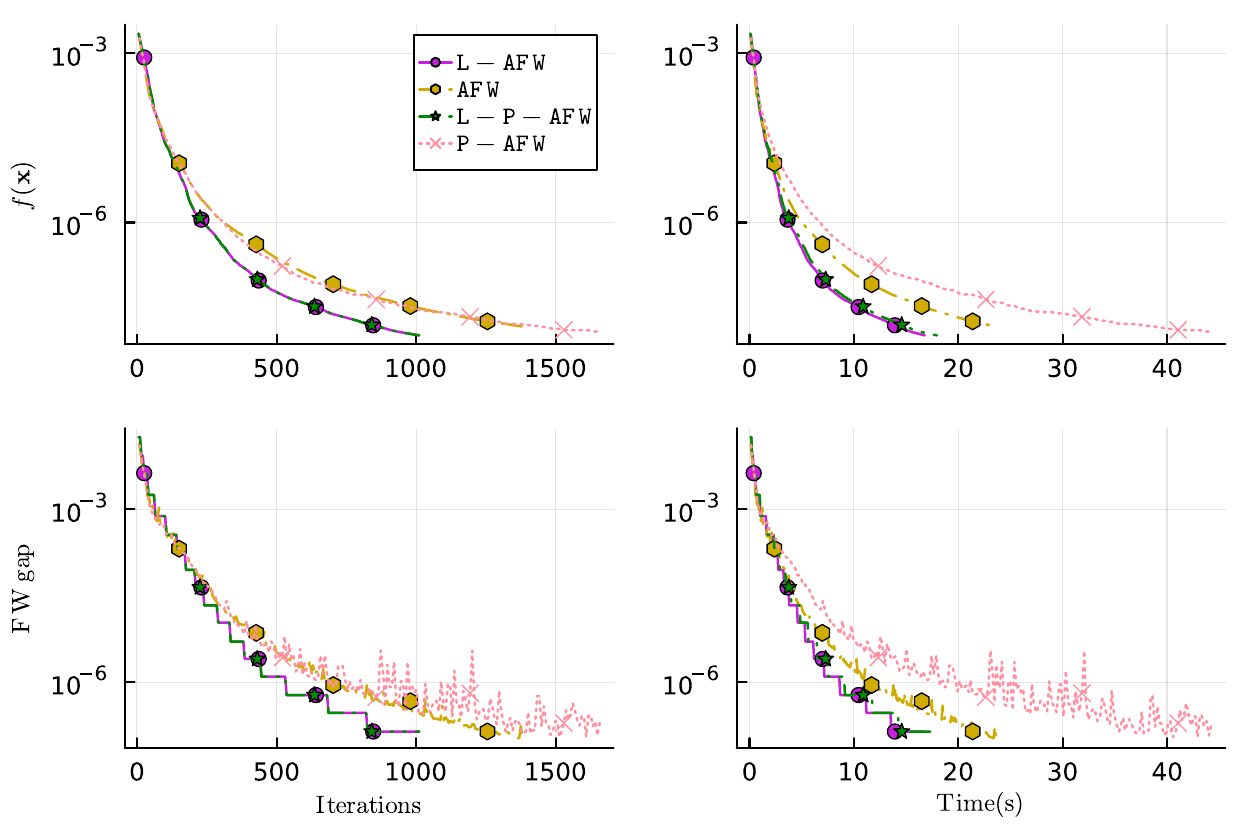}
\end{subfigure}
\caption{$K$-sparse logistic regression on the lazified and non-lazified \afw, $K=10$ and $\tau=4$.}
\label{fig:logreg-ksparse40}
\end{figure}

We run all algorithms on a logistic regression problem with an $\ell_1$-norm ball constraint:
\begin{align*}
    \min_{\xx} \; & \frac1m \sum_{i=1}^{m} \log(1+\exp (-y_i \mathbf{a}_i^\intercal \xx) \\
    \text{s.t.} \;& \|\xx\|_1 \leq \tau,
\end{align*}
where $m$ is the number of samples, $\tau > 0$ is the $\ell_1$-norm ball radius, $y_i \in \{-1,1\}$ encodes the class and $\mathbf{a}_i$ the feature vector for the $i$th sample.
We use the Gisette dataset \citep{guyon2004result}, which contains 5000 features, we run logistic regression on the validation set containing only 1000 samples and thus more prone to overfitting without a sparsity-inducing regularization constraint.
The convergence of the pivoting variants of both \afw{} and \bpfw{} converge similarly in both function value and FW gap as their standard counterparts as shown in
Figure~\ref{fig:logreg-60}.
Even though \bpfw{} typically maintains a smaller active set, it converges at a slower rate than the away-step \fw{} variants, both in function value and FW gap.
\texttt{P}-\afw{} drastically improves the sparsity of the \afw{} iterates, while maintaining the same convergence in function value and FW gap.
This highlights one key property of our meta algorithm: it can be adapted to several \fw{} variants, benefiting from their convergence rate while improving their sparsity.

We also fit logistic regression models on the $K$-sparse polytope \citep{cai2013sparse}:
\begin{align*}
    \mathcal{K}_{K,\tau} = & \{ \xx \in \R \mid \|\xx\|_1 \leq K \tau, \|\xx\|_{\infty} \leq \tau \} = \conv(\{ \xx \in \R \mid \|\xx\|_0 \leq K, |x_i| \in \{0,\tau\} \} ),
\end{align*}
the convex hull of vectors with at most $K$ non-zero entries with values in $\{-\tau,\tau\}$ for a given radius $\tau > 0$.

The relative behavior of the lazified version of the algorithms is illustrated on the K-sparse polytope in Figure~\ref{fig:logreg-ksparse10}. The lazified \afw variants converge faster in both function value and FW gap than the \bpfw variants, although the latter maintain much sparser iterates. \texttt{L-P-}\afw{} reduces the cardinality of the active set, especially when approaching the optimum.

We also study and compare the effect of the pivoting meta algorithm on the lazified and non-lazified version of \afw, illustrated on another regression on the $K$-sparse polytope in Figure~\ref{fig:logreg-ksparse40}.
On this instance, the pivoting technique applied to the lazified \afw algorithm only slightly improves the active set cardinality and does not change convergence. Pivoting however sharply reduces the cardinality of the active set for the non-lazified variant.
This however comes at the expense of a higher per-iteration runtime and higher numerical instabilities which affect in particular the FW gap convergence.
Such artifacts occur when the determinant of the matrix $M$ increases with new vertices, and the phenomenon is particularly pronounced for non-lazified variants which typically introduce more vertices. We discuss numerical robustness and stability in more detail in Subsection~\ref{sec:algorithmic}.

\subsection{Sparse signal recovery}

We assess our algorithm on a sparse signal recovery problem:
\begin{align*}
    \min_{\xx}\, &  \|A\xx - \yy\|_2^2  \\
    \text{s.t.}\; & \|\xx\|_1 \leq \tau,
\end{align*}
with $A\in\mathbb{R}^{m\times n}$, $\yy\in\R^m$, $ n > m$.
We generate the entries of the sensing matrix $A$ i.i.d.~from a standard Gaussian distribution and $\yy$ by adding Gaussian noise with unit standard deviation to $A \xx_{\text{true}}$,
with $\xx_{\text{true}}$ an underlying sparse vector, with $30\%$ of non-zero terms, all taking entries sampled from a standard Gaussian distribution.
The radius $\tau$ is computed as $\tau = \|\xx_{\text{true}}\|_1 / \tau_f$ for different values of $\tau_f$.

Figure~\ref{fig:signal-20} illustrates the results of the non-lazified version of \bpfw{} and \afw{} and their pivoting counterparts.
\texttt{P-\afw{}} converges at the same rate as \afw in function value and FW gap while being faster than both \bpfw{} variants, terminating before them, while maintaining an active set twice as sparse as \afw.

\begin{figure}[ht]
\centering
\begin{subfigure}[t]{0.9\textwidth}
    \centering
    \includegraphics[width=0.65\textwidth]{img/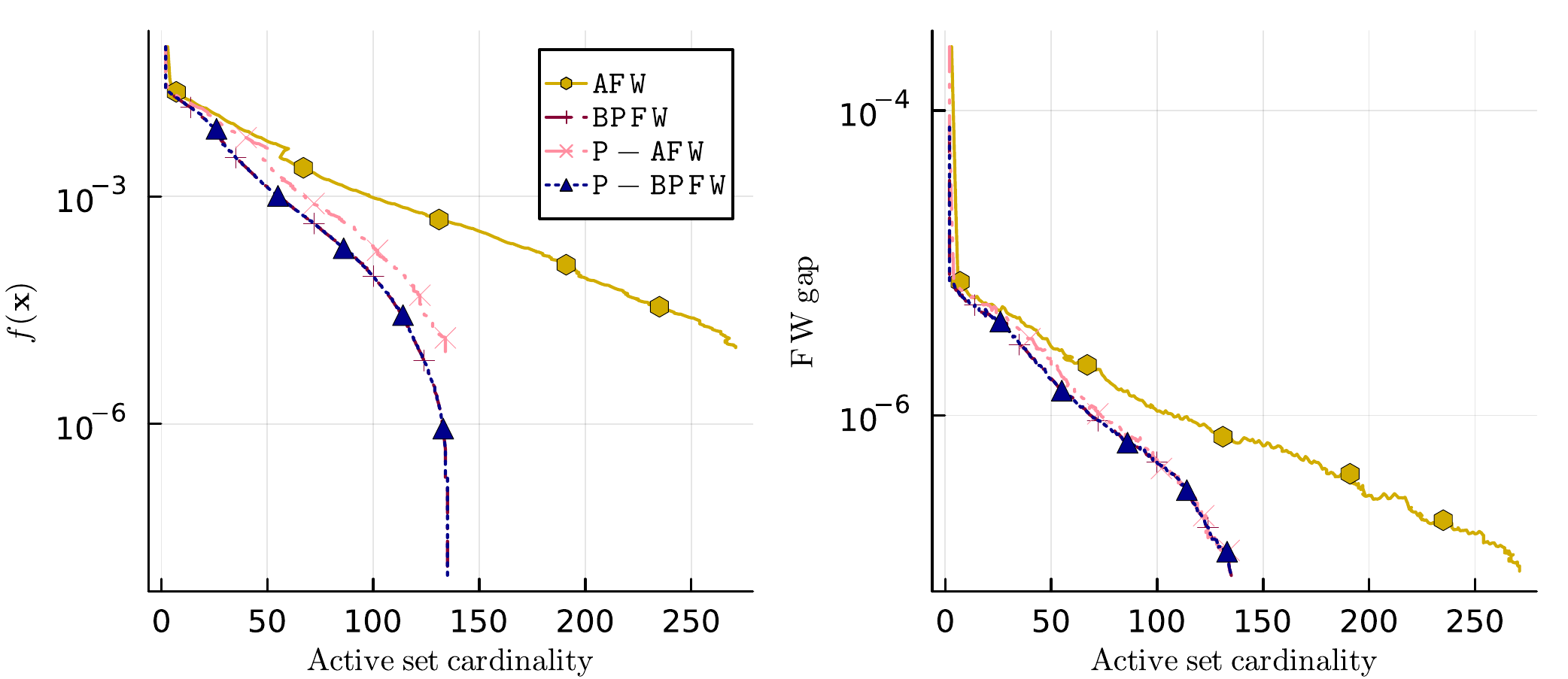}
\end{subfigure}
\hspace{0.05\textwidth}
\begin{subfigure}[t]{0.9\textwidth}
    \centering
    \includegraphics[width=0.65\textwidth]{img/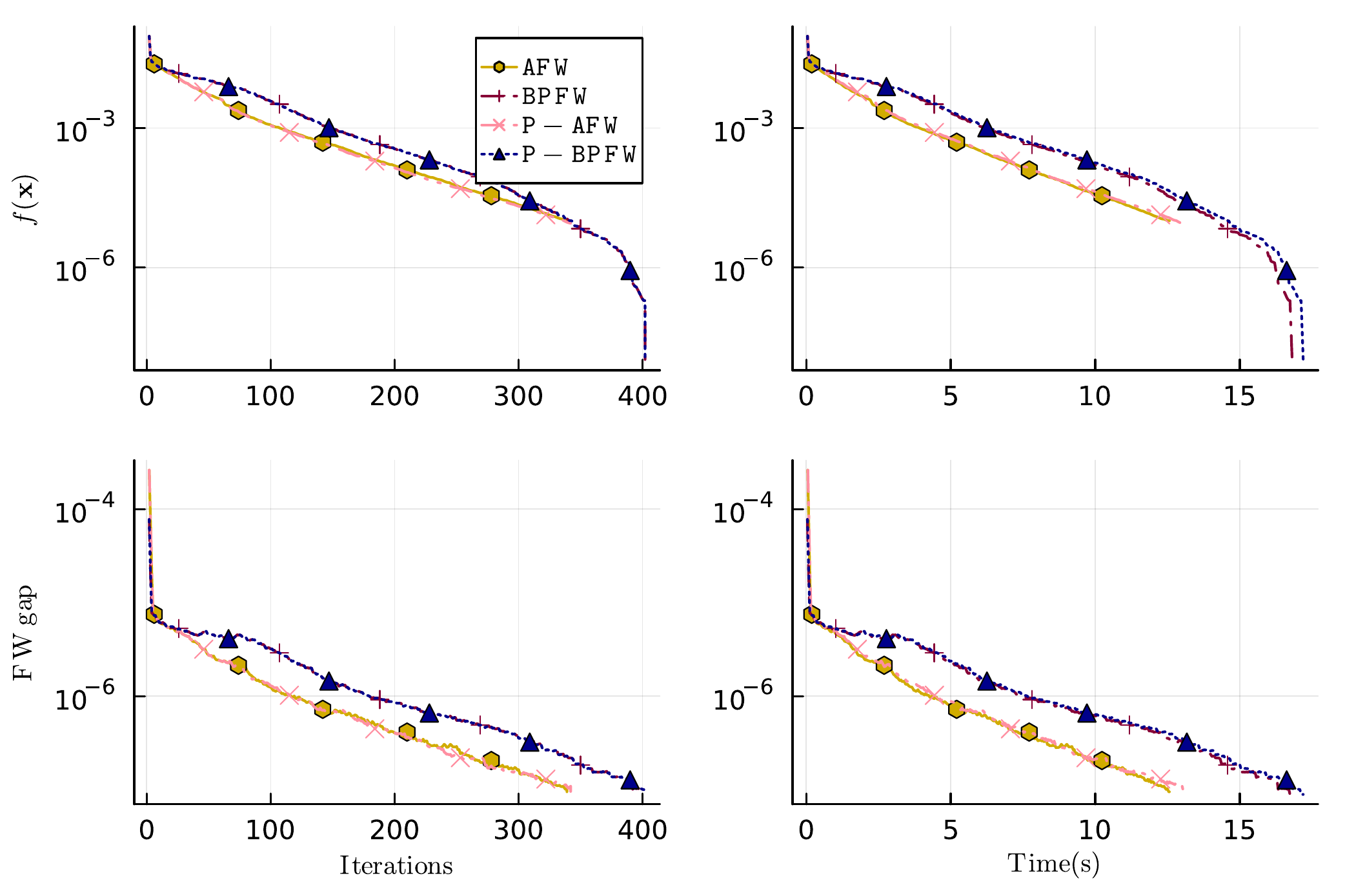}
\end{subfigure}
\caption{Signal recovery, $\tau_f=20$, $m = 6000$, $n = 14000$. All variants are non-lazified.}
\label{fig:signal-20}
\end{figure}

\end{document}